\newtheorem{theorem}{Theorem}
\newtheorem{corollary}[theorem]{Corollary}
\newtheorem{definition}[theorem]{Definition}
\newtheorem{lemma}[theorem]{Lemma}
\newtheorem{proposition}[theorem]{Proposition}
\newtheorem{remark}[theorem]{Remark}
\newtheorem{example}[theorem]{Example}
 \newcommand{\eps}{\varepsilon}
  \newcommand{\F}{\mathcal{F}}
 \newcommand{\M}{\mathsf{M}}
 \newcommand{\W}{\mathcal{W}}
 \newcommand{\X}{\mathcal{X}}
 \renewcommand{\phi}{\varphi}
\newcommand{\E}{\mathbb{E}}
\renewcommand{\P}{\mathbb{P}}
\newcommand{\N}{\mathbb{N}}
\newcommand{\Q}{\mathbb{Q}}
\newcommand{\R}{\mathbb{R}}
\newcommand{\law}{\mathrm{Law}}
\newcommand{\Xs}{\mathcal{X}}
\newcommand{\Ys}{\mathcal{Y}}
\newcommand{\Ysf}{\mathsf{Y}}
\newcommand{\sX}{\mathsf{X}}
\newcommand{\cF}{\mathcal{F}}
\newcommand{\cG}{\mathcal{G}}
\renewcommand{\W}{{\mathbb W}}
\newcommand{\I}{\mathbbm 1}
\newcommand{\1}{\mathbbm 1}
\newcommand{\indic}{\1}
\renewcommand{\eps}{\varepsilon}
\renewcommand{\epsilon}{\varepsilon}
\newcommand{\eg}{e.g.}
\DeclareMathOperator{\id}{Id}
\DeclareMathOperator{\proj}{proj}
\DeclareMathOperator*{\argmin}{arg\,min}
\newcommand{\fourIdx}[5]{%
\setbox1=\hbox{\ensuremath{^{#1}}}%
 \setbox2=\hbox{\ensuremath{_{#2}}}%
 \setbox5=\hbox{\ensuremath{#5}}%
 \hspace{\ifnum\wd1>\wd2\wd1\else\wd2\fi}%
 \ensuremath{\copy5^{\hspace{-\wd1}\hspace{-\wd5}#1\hspace{\wd5}#3}%
 _{\hspace{-\wd2}\hspace{-\wd5}#2\hspace{\wd5}#4}%
 }}
\numberwithin{equation}{section}
\numberwithin{theorem}{section}
\newcommand{\TRT}{\mathsf{JOIN}}
\newcommand{\RST}{\mathsf{RST}}
\newcommand{\RMST}{\mathsf{RMST}}
\newcommand{\TRST}{\mathsf{JOIN}}
\newcommand{\Opt}{\mathsf{Opt}}
\newcommand{\SG}{\mathsf{SG}}
\newcommand{\CS}{\mathsf{CS}}
\newcommand{\MCS}{\mathsf{MCS}}
\newcommand{\cl}{\textsc{cl}}
\newcommand{\op}{\textsc{op}}
\renewcommand{\subset}{\subseteq}
\renewcommand{\supset}{\supseteq}
\newcommand{\ol}[1]{\overline{#1}}
\newcommand{\ul}[1]{\underline{#1}}
\newcommand{\cat}{\otimes}
\renewcommand{\S}[1]{S^{\cat {#1}}}
\newcommand{\CR}{{C(\R_+)}}
\newcommand{\CRx}{{C_x(\R_+)}}
\newcommand{\CRo}{{C_0(\R_+)}}
\newcommand{\CRR}{{C(\R_+)}}
\newcommand{\leb}{\mathcal L}
\newcommand\numberthis{\addtocounter{equation}{1}\tag{\theequation}}
\renewcommand{\llcorner}{\upharpoonright}
\newcommand{\ul}[1]{\underline{#1}}\newcommand{\ol}[1]{\overline{#1}}\newcommand{\indic}[1]{\boldsymbol{1}_{\{\ensuremath{#1}\}}}");
\begin{document}

\title{The geometry of multi-marginal Skorokhod Embedding}

\author{Mathias Beiglb\"ock} \author{Alexander M.~G.~Cox} \author{Martin
  Huesmann} \thanks{ The first author gratefully acknowledges support
   by the FWF-grants p26736 and Y782, the third author gratefully acknowledges support by the German  Research  Foundation  through  the  Hausdorff
Center for Mathematics and the Collaborative Research Center 1060.}  \date{\today}
\begin{abstract}

  The Skorokhod Embedding Problem (SEP) is one of the classical problems in the study of stochastic processes, with applications in many different fields (cf.~ the surveys \cite{Ob04,Ho11}). Many of these applications have natural multi-marginal extensions leading to the \emph{(optimal) multi-marginal Skorokhod problem} (MSEP). Some of the first papers to consider this problem are \cite{Ho98b, BrHoRo01b, MaYo02}. However, this turns out to be difficult using existing techniques: only recently a complete solution was be obtained in \cite{CoObTo15} establishing an extension of the Root construction, while other instances are only partially answered or remain wide open.

In this paper, we extend the theory developed in \cite{BeCoHu14} to the multi-marginal setup which is comparable to the extension of the optimal transport problem to the multi-marginal optimal transport problem.  As for the one-marginal case, this viewpoint turns out to be very powerful. In particular, we are able to show that all classical optimal embeddings have natural multi-marginal counterparts. Notably these different constructions are linked through a joint geometric structure and the classical solutions are recovered as particular cases.

Moreover, our results also have consequences for the study of the martingale transport problem as well as the peacock problem.

\smallskip

\noindent\emph{Keywords:} optimal transport, Skorokhod embedding, multiple marginals, martingale optimal transport, peacocks.\\
\emph{Mathematics Subject Classification (2010):} Primary 60G42, 60G44; Secondary 91G20.
\end{abstract}
\maketitle

\section{Introduction} 

The Skorokhod Embedding problem (SEP) is a classical problem in probability, dating back to the 1960s (\cite{Sk61,Sk65}). Simply stated, the aim is to represent a given probability as the distribution of Brownian motion at a chosen stopping time. Recently, motivated by applications in probability, mathematical finance, and numerical methods, there has been renewed, sustained interest in solutions to the SEP (cf.\ the two surveys \cite{Ob04,Ho11}) and its multi-marginal extension, the multi-marginal SEP: Given marginal measures $\mu_0,\ldots,  \mu_n$ of finite variance and a Brownian motion with $B_0\sim \mu_0$, construct stopping times $\tau_1\leq \ldots\leq\tau_n$ s.t. 
\begin{align}\label{MSkoSol}\tag{MSEP}
\text{$ B_{\tau_i} \sim \mu_i$ for all $1\leq i\leq n$ and $\E[\tau_n]<\infty.$ }
\end{align}
It is well known that a solution to \eqref{MSkoSol} exists iff the marginals are in convex order ($\mu_0\preceq_c\ldots \preceq_c \mu_n$) and have finite second moment; under this condition Skorokhod's original results give the existence of solutions of the induced one period problems, which can then be pasted together to obtain a solution to \eqref{MSkoSol}.

It appears to be significantly harder to develop genuine extensions of one period solutions: many of the classical solutions to the SEP exhibit additional desirable characteristics and optimality properties which one would like to extend to the multi-marginal case.  However the original derivations of these solutions make significant use of the particular structure inherent to certain problems, often relying on explicit calculations, which make extensions difficult if not impossible. The first paper which we are aware of to attempt to extend a classical construction to the multi-marginal setting is \cite{BrHoRo01b}, who generalised the Az\'{e}ma-Yor embedding (\cite{AY79}) to the case with two marginals. This work was further extended by Henry-Labord\`ere, Ob\l\'oj, Spoida, and Touzi \cite{HLOb12, ObSp14}, who were able to extend to arbitrary (finite) marginals, under a non-trivial assumption on the measures.  Using an extension of the stochastic control approach in \cite{GaHeTo12} Claisse, Guo, and Henry-Labord\`ere \cite{ClGuHL15} constructed a two marginal extension of the Vallois embedding.  Recently, Cox, Obloj, and Touzi \cite{CoObTo15} were able to characterise the solution to the general multi-marginal Root embedding through the use of an optimal stopping formulation.

\subsection*{Mass transport approach and general multi-marginal embedding}
In this paper, we develop a new approach to
the multi-marginal Skorokhod problem, based on insights from the field of optimal transport.

Following the seminal paper of Gangbo and McCann \cite{GaMc96} the mutual interplay of optimality and geometry of optimal transport plans has been a cornerstone of the field. As shown for example in \cite{CoFrKl13,Pa15} this in not limited to the two-marginal case but extends to the multi-marginal case where it turns out to be much harder though. 
Recently, similar ideas have been shown to carry over to a more probablistic context, to optimal transport problems satisfying additional linear constraints \cite{BeJu12, Za14, GhKiLi15} and in fact to the classical Skorokhod embedding problem \cite{BeCoHu14}.

Building on these insights, we extend the mass transport viewpoint developed in \cite{BeCoHu14} to the 
multi-marginal Skorokhod embedding problem.  This allows us to give multi-marginal extensions of all the classical optimal  solutions to the Skorokhod problem in full generality, which we exemplify by several examples. In particular the classical solutions of Az\'ema-Yor, Root, Rost, Jacka, Perkins, and Vallois can be recovered as special cases.
 In addition, the approach allows us to derive a number of new solutions to \eqref{MSkoSol} which have further applications to e.g.\ martingale optimal transport and the peacock problem. 
A main contribution of this paper is that in many different cases, solutions to the multi-marginal SEP share a common \emph{geometric structure}. In all the cases we consider, this geometric information will in fact be enough to characterise the optimiser uniquely, which highlights the flexibility of our approach.

Furthermore, our approach to the Skorokhod embedding problem is very general and does not rely on fine properties of Brownian motion. Therefore, exactly as in \cite{BeCoHu14} the results of this article carry over to sufficiently regular Markov processes, \eg{} geometric Brownian motion, three-dimensional Bessel process and Ornstein-Uhlenbeck processes, and Brownian motion in $\R^d$ for $d >1$. As the arguments are precisely the same as in  \cite{BeCoHu14}, we refer to  \cite[Section 8]{BeCoHu14} for details.

\subsection*{Related Work}
Interest in the multi-marginal Skorokhod problem comes from a number of directions and we describe some of these here:

\begin{itemize}
\item {\bf Maximising the running maximum: the Az\'{e}ma-Yor embedding}

  \noindent
  Suppose $(M_t)_{t\geq0}$ is a martingale and write $\bar M_t:=\sup_{s\leq t}M_s$.  The relationship between the laws of $M_1$ and $\bar M_1$ has been studied by Blackwell and Dubins \cite{BlDu63}, Dubins and Gilat \cite{DuGi78} and Kertz and R\"osler \cite{KeRo90}, culminating in a complete classification of all possible joint laws by Rogers \cite{Ro93}. In particular given the law of $M_1$, the set of possible laws of $\bar M_1$ admits a maximum w.r.t.\ the stochastic ordering, this can be seen through the Az\'{e}ma-Yor embedding.  Given initial and terminal laws of the martingale, Hobson \cite{Ho98a} gave a sharp upper bound on the law of the maximum based on an extension of the Az\'{e}ma-Yor embedding to Brownian motion started according to a non-trivial initial law. These results are further extended in \cite{BrHoRo01b} to the case of martingales started in $0$ and constrained to a specified marginal at an intermediate time point, essentially based on a further extension of the Az\'{e}ma-Yor construction. The natural aim is to solve this question in the case of arbitrarily many marginals. Assuming that the marginals have ordered barycenter functions this case is included in the work of Madan and Yor \cite{MaYo02}, based on iterating the Az\'{e}ma-Yor scheme. More recently, the stochastic control approach of \cite{GaHeTo12} (for one marginal) is extended by Henry-Labord\`ere, Ob\l\'oj, Spoida, and Touzi \cite{HLOb12, ObSp14} to marginals in convex order satisfying an additional assumption (\cite[Assumption $\circledast$]{ObSp14}\footnote{As shown by an example in \cite{ObSp14} this condition is necessary to carry out their explicit construction.}). Together with the Dambis-Dubins-Schwarz Theorem,  Theorem~\ref{thm:AzemaYor} below provides a solution to this problem in full generality.

\item {\bf Multi-Marginal Root embedding}

\noindent
In a now classical paper, Root \cite{Ro69} showed that for any centred distribution with finite second moment, $\mu$, there exists a (right) \emph{barrier} $\mathcal R$, i.e.\ a Borel subset of $\R_+\times\R$ such that $(t,x)\in \mathcal R$ implies $(s,x)\in \mathcal R$ for all $s\geq t$, and for which $B_{\tau_{\mathcal R}}\sim \mu$, $\tau_{\mathcal R}=\inf\{t: (t,B_t)\in \mathcal R\}$. This work was further generalised to a large class of Markov processes by Rost \cite{Ro76}, who also showed that this construction was optimal in that it minimised $\E[h(\tau)]$ for convex functions $h$.

More recent work on the Root embedding has focused on attempts to characterise the stopping region. A number of papers do this either through analytical means (\cite{Mc91,CoWa11,CoWa13,GaObRe15}) or through connections with optimal stopping problems (\cite{DeAngelis:15}). Recently the connection to optimal stopping problems has enabled Cox, Ob\l\'oj, and Touzi \cite{CoObTo15} to extend these results to the multi-marginal setting Moreover, they prove that this solution enjoys a similar optimality property to the one-marginal Root solution. The principal strategy is to first prove the result in the case of locally finitely supported measures by means of a time reversal argument. The proof is then completed in the case of general measures by a delicate limiting procedure. 

As a consequence of the theoretical results in this paper, we will be able to prove similar results. In particular, the barrier structure as well as the optimality properties are recovered in Theorem~\ref{thm:Root1}. Indeed, as we will show below, the particular geometric structure of the Root embedding turns out to be archetypal for a number of multi-marginal counterparts of classical embeddings.

\item {\bf Model-independent Finance}

\noindent
 An important application field for the results in this paper, and one of the motivating factors behind the recent resurgence of interest in the SEP, relates to model-independent finance. In mathematical finance, one models the price process $S$ as a martingale under a risk-neutral measure, and specifying prices of call options at maturity $T$ is equivalent to fixing the distribution $\mu$ of $S_T$. Understanding no-arbitrage price bounds for a functional $\gamma$, can often be seen to be equivalent to finding the range of $\E[\gamma(B)_\tau]$ among all solutions to the Skorokhod embedding problem for $\mu$. This link between SEP and model-independent pricing and hedging was pioneered by Hobson~\cite{Ho98a} and has been an important question ever since. A comprehensive overview is given in \cite{Ho11}. 

 However, the above approach uses only market data for the maturity time $T$, while in practice market data for many intermediate maturities may also be available, and this corresponds to the multi-marginal SEP. While we do not pursue this direction of research in this article we emphasize that our approach yields a systematic method to address this problem.  In particular, the general framework of super-replication results for model-independent framework now includes a number of important contributions, see \cite{DoSo12, GTT15, BeCoHuPePr15, HoOb15}, and most of these papers allow for information at multiple intermediate times.

\item {\bf Martingale optimal transport}

\noindent
Optimal transport problems where the transport plan must satisfy additional martingale constraints have recently been investigated, \eg{} in \cite{HoNe12, BeHePe12, BeJu12, GaHeTo12, DoSo12, CaLaMa17}. Besides having a natural interpretation in finance, such martingale transport problems are also of independent mathematical interest, for example -- similarly to classical optimal transport -- they have consequences for the investigation of martingale inequalities (see \eg{} \cite{BoNu13,HeObSpTo12,ObSp14}).  As observed in \cite{BeHeTo15} one can gain insight into the martingale transport problem between two probabilities $\mu_1$ and $\mu_2$ by relating it to a Skorokhod embedding problem which may be considered as continuous time version of the martingale transport problem. Notably this idea can be used to recover the known solutions of the martingale optimal transport problem in a unified fashion (\cite{HuSt16}). It thus seems natural that an improved understanding of an $n$-marginal martingale transport problem can be obtained based on the multi-marginal Skorokhod embedding problem. Indeed this is exemplified in Theorem~\ref{thm:MMMM1} below, where we use a multi-marginal embedding to establish an $n$-period version of the martingale monotone transport plan, and recover similar results to recent work of Nutz, Stebegg, and Tan \cite{NuStTa17}.

\item {\bf Construction of peacocks}

\noindent
Dating back to the work of Madan--Yor \cite{MaYo02}, and studied systematically in the book of Hirsch, Profeta, Roynette and Yor \cite{HiPr11}, given a family of probability measures $(\mu_t)_{t \in [0,T]}$ which are increasing in convex order, a \emph{peacock} (from the acronym PCOC ``Processus Croissant pour l'Ordre Convexe'') is a martingale such that $M_t \sim \mu_t$ for all $t \in [0,T]$.  The existence of such a process is granted by Kellerer's celebrated theorem, and typically there is an abundance of such processes. Loosely speaking, the peacock problem is to give constructions of such martingales.  Often such constructions are based on Skorokhod embedding or particular martingale transport plans, and often one is further interested in producing solutions with some additional optimality properties; see for example the recent works \cite{HeTaTo16, Ju14a, KaTaTo15,Ho16}.

Given the intricacies of multi-period martingale optimal transport and Skorokhod embedding, it is necessary to make additional assumptions on the underlying marginals and desired optimality properties are in general not preserved in a straight forward way during the inherent limiting/pasting procedure.  We expect that an improved understanding of the multi-marginal Skorokhod embedding problem will provide a first step to tackle these range of problems in a systematic fashion.
\end{itemize}

\subsection{Outline of the Paper}

We will proceed as follows. In Section~\ref{sec:probint}, we will describe our main results. Our main technical tool is a `monotonicity principle', Theorem~\ref{thm:monprin2}. This result allows us to deduce the geometric structure of optimisers. Having stated this result, and defined the notion of `stop-go pairs', which are important mathematical embodiment of the notion of `swapping' stopping rules for a candidate optimiser, we will be able to deduce our main consequential results. Specifically, we will prove the multi-marginal generalisations of the Root, Rost and Az\'{e}ma-Yor embeddings, using their optimality properties as a key tool in their construction. The Rost construction is entirely novel, and the solution to the Az\'{e}ma-Yor embedding generalises existing results, which have only previously been given under a stronger assumption on the measures. We also give a multi-marginal generalisation of an embedding due to Hobson \& Pedersen; this is, in some sense, the counterpart of the Az\'{e}ma-Yor embedding; classically, this is better recognised as the embedding of Perkins \cite{Pe86}, however for reasons we give later, this embedding has no multi-marginal extension.  Moreover the proofs of these results will share a common structure, and it will be clear how to generalise these methods to provide similar results for a number of other classical solutions to the SEP.

In Section~\ref{sec:probint}, we also use our methods to give a multi-marginal martingale monotone transport plan, using a construction based on a SEP-viewpoint.

The remainder of the paper is then dedicated to proving the main technical result, Theorem~\ref{thm:monprin2}. In Section~\ref{sec:Si}, we introduce our technical setup, and prove some preliminary results. As in \cite{BeCoHu14}, it will be important to consider the class of \emph{randomised} multi-stopping times, and we define these in this section, and derive a number of useful properties. It is technically convenient to consider randomised multi-stopping times on a canonical probability space, where there is sufficient additional randomisation, independent of the Brownian motion, however we will prove in Lemma~\ref{lem:rmst} that any sufficiently rich probability space will suffice. A key property of the set of randomised multi-stopping times embedding a given sequence of measures is that this set is compact in an appropriate (weak) topology, and this will be proved in Proposition~\ref{prop:RMSTcompact}; an important consequence of this is that optimisers of the multi-marginal SEP exist under relatively mild assumptions on the objective (Theorem~\ref{thm:exists}).

In Section~\ref{sec:CSMCSSGPairs} we introduce the notions of color-swap pairs, and multi-colour swap pairs. These will be the fundamental constituents of the set of `bad-pairs', or combinations of stopped and running paths that we do not expect to see in optimal solutions. In this section we define these pairs, and prove some technical properties of the sets.

In Section~\ref{sec:monotonicity} we complete the proof of Theorem~\ref{thm:monprin2}. In spirit this follows the proof of the corresponding result in \cite{BeCoHu14}, and we only provide the details here where the proof needs to adapt to account for the multi-marginal setting.

\subsection{Frequently used notation}\label{sec:notation}

\begin{itemize}
\item The set of (sub-)probability measures on a space $\mathsf{X}$ is denoted by $\mathcal P(\mathsf{X})$ / $\mathcal P^{\leq 1}(\mathsf{X})$.
\item $\Xi^d=\{(s_1,\ldots,s_d):0\leq s_1\leq\ldots\leq s_d\}$ denotes the $d$-dimensional simplex. 
\item The $d$-dimensional Lebesgue measure will be denoted by $\leb^d$.
\item For a measure $\xi$ on $\mathsf{X}$ we write $f(\xi)$ for the push-forward of $\xi$ under $f:\mathsf{X}\to \mathsf{Y}$.
\item We use $\xi(f)$ as well as $\int f~ d\xi$ to denote the integral of a function $f$ against a measure $\xi$.
\item $\CRx$ denotes the continuous functions starting in $x$; $\CRR=\bigcup_{x\in\R}\CRx$.  For $\omega\in\CR$ we write $\theta_s\omega$ for the path in $\CRo$ defined by $(\theta_s\omega)_{t\geq 0}=(\omega_{t+s}-\omega_s)_{t\geq 0}$.
\item $\W$ denotes Wiener measure; $\W_\mu$ denotes law of Brownian motion started according to a probability $\mu$; $\F^0$  ($\F^a$) the natural (augmented) filtration on $\CRo$.
\item For $d\in\N$ we set $\overline{\CR}=\CR\times[0,1]^d$, $\bar\W=\W\otimes\leb^d$, and $\bar \cF=(\bar \cF_t)_{t\geq 0}$ the usual augmentation of $(\cF_t^0\otimes\mathcal B([0,1]^d))_{t\geq 0}$. To keep notation manageable, we suppress $d$ from the notation since the precise number will always be clear from the context.
\item $\sX$ is a Polish space equipped with a Borel probability measure $m$. We set $\Xs:=\sX\times \CRo$, $\P=m\otimes \W$, $\cG^0=(\cG_t^0)_{t\geq 0}=(\mathcal{B}(\sX)\otimes \F_t^0)_{t\geq 0}$, $\cG^a$ the usual augmentation of $\cG^0$. 
\item For $d\in\N$ we set $\bar\X=\X\times [0,1]^d$, $\bar\P=\P\otimes\leb^d$, and $\bar \cG=(\bar \cG_t)_{t\geq 0}$ the usual augmentation of $(\cG_t^0\otimes\mathcal B([0,1]^d))_{t\geq 0}$. Again, we suppress $d$ from the notation since the precise number will always be clear from the context.
\item The set of stopped paths started at $0$ is denoted by  $S =\{(f,s): f:[0,s] \to \R \mbox{ is continuous, $f(0)=0$}\}$ and we define $r:\CRo\times\R_+\to S$ by $r(\omega, t):= (\omega_{\upharpoonright [0,t]},t)$. The set of stopped paths started in $\sX$ is $S_\sX =(\sX,S)=\{(x,f,s): f:[0,s] \to \R \mbox{ is continuous, $f(0)=0$}, x\in \sX\}$ and we define $r_\sX:\sX\times\CRo\times\R_+\to S_\sX$ by $r_\sX(x,\omega, t):= (x,\omega_{\upharpoonright [0,t]},t)$, i.e.\ $r_\sX=(\id,r)$. 
\item We use $\oplus$ for the concatenation of paths: depending on the context the arguments may be elements of $S$, $\CRo$ or $\CRo\times\R_+$. Specifically, $\oplus: \mathsf{Y} \times \mathsf{Z} \to \mathsf{Z}$, where $\mathsf{Y}$ is either $S$ or $\CRo\times\R_+$, and $\mathsf{Z}$ may be any of the three spaces. For example, if $(f,s) \in S$ and $\omega \in \CRo$, then $(f,s) \oplus \omega$ is the path 
\begin{equation}
  \omega'(t) = 
  \begin{cases}
  f(t) & t \le s\\
  f(s) + \omega(t-s) & t > s
  \end{cases} \label{eq:concatenation}.
\end{equation}
\item As well as the simple concatenation of paths, we introduce a concatenation operator which keeps track of the concatenation time: if $(f,s),(g,t) \in S$, then $(f,s)\cat(g,t) = (f\oplus g,s,s+t)$.  We denote the set of elements of this form as $\S{2}$, and inductively, $\S{i}$ in the same manner.
\item Elements of $\S{i}$ will usually be denoted by $(f,s_1,\ldots,s_i)$ or $(g,t_1,\ldots,t_i)$. We define $r_i:\CRo\times\Xi^i\to \S{i}$ by $r_i(\omega, s_1,\ldots,s_i)):= (\omega_{\upharpoonright [0,s_i]},s_1,\ldots,s_i)$. Accordingly, the set of $i$-times stopped paths started in $\sX$ is $\S{i}_\sX=(\sX,\S{i})$. Elements of $\S{i}_\sX$ are usually denoted by $(x,f,s_1,\ldots,s_i)$ or $(y,g,t_1,\ldots,t_i)$. In case of $\sX=\R$ we often simply write $(f,s_1,\ldots,s_i)$ or $(g,t_1,\ldots,t_i)$ with the understanding that $f(0),g(0)\in\R$. In case that there is no danger of confusion we will also sometimes write $\S{i}_\R=\S{i}$. 
The operators $\oplus, \cat$ generalise in the obvious way to allow elements of $\S{i}_\sX$ to the left of the operator.
\item For $(x,f,s_1,\ldots,s_i)\in\S{i}_\sX, (h,s)\in S$ we often denote their concatenation by $(x,f,s_1,\ldots,s_i)|(h,s)$ which is the same element as $(x,f,s_1,\ldots,s_i)\otimes (h,s)$ but comes with the probabilistic interpretation of conditioning on the continuation of $(f,s_1,\ldots,s_i)$ by $(h,s)$. In practice, this means that we will typically expect the $(h,s)$ to be absorbed by a later $\oplus$ operation.
\item The map $\X\times\Xi^i\ni(x,\omega,s_1,\ldots,s_i)\mapsto (x,\omega_{\llcorner [0,s_i]},s_1,\ldots,s_i)\in \S{i}_\sX$ will (by slight abuse of notation) also be denoted by $r_i$. 
\item We set $ \tilde r_i: \X\times\Xi^i \to \S{i}_\sX\times C(\R_+), (x,\omega,s_1,\ldots,s_i)\mapsto ((x,\omega_{\llcorner [0,s_i]},s_1,\ldots,s_i), \theta_{s_i}\omega)$. Then $\tilde r_i$ is clearly a homeomorphism with inverse map
$$\tilde r^{-1}_i:((x,f,s_1,\ldots,s_i),\omega)\mapsto (x,f\oplus \omega,s_1,\ldots,s_i).$$
Hence, $\xi^i=\tilde r_i^{-1}(\tilde r_i(\xi^i))~.$ For $1\leq i<d$ we can extend $\tilde r_i$ to a map $\tilde r_{d,i}: \X\times\Xi^d\to \S{i}_\sX\times C(\R_+)\times \Xi^{d-i}$ by setting
$$\tilde r_{d,i}(x,\omega,s_1,\ldots,s_d)=((x,\omega_{\llcorner [0,s_i]},s_1,\ldots,s_i),\theta_{s_i}\omega, (s_{i+1}-s_i,\ldots,s_d-s_i)).$$ 
\item For $\Gamma_i \subseteq \S{i}$  we set $\Gamma_i^<:=\{(f,s_1,\ldots, s_{i-1},s_i): \exists (\tilde f,s_1,\ldots,s_{i-1}, \tilde s)\in \Gamma, s_{i-1}\leq s_i< \tilde s \mbox{ and $f\equiv \tilde f$ on $[0,s]$}\},$ where we set $s_0=0.$
\item For   $(f,s_1,\ldots,s_i) \in \S{i}$ we write $\ol{f} = \sup_{r \le s_i} f(r)$, and $\ul{f} = \inf_{r \le s_i} f(r)$.
\item   For $1\leq i<n$ and $F$  a function on $\S{n}$ resp.\ $\CRo\times \Xi^n$ and $(f,s_1,\ldots,s_i)\in \S{i}$ we set 
  \begin{align*}
    F^{(f,s_1,\ldots,s_i)\cat} (\eta,t_{i+1},\ldots,t_n)
    & := F(f\oplus \eta,s_1,\ldots,s_i,s_i+t_{i+1},\ldots,s_i+t_n)\\
    & \quad = F\left( (f,s_1,\ldots,s_i)\cat (\eta,t_{i+1},\ldots,t_n)\right),
  \end{align*}
  where $(\eta,t_{i+1},\ldots,t_n)$ may be an element of $\S{n-i}$, or $\CRo\times \Xi^{n-i}$. We similarly define
  \begin{align*}
    F^{(f,s_1,\ldots,s_i)\oplus} (\eta,t_{i+1},\ldots,t_n)
    & := F(f\oplus \eta,s_1,\ldots,s_{i-1},s_i+t_{i+1},\ldots,s_i+t_n)\\
    & \quad = F\left( (f,s_1,\ldots,s_i)\oplus (\eta,t_{i+1},\ldots,t_n)\right),
  \end{align*}
  where $(\eta,t_{i},\ldots,t_n)$ may be an element of $\S{n-i+1}$, or $\CRo\times \Xi^{n-i+1}$.
\item For any $j$-tuple $1\leq i_1 <\ldots < i_j\leq d$ we denote by
$\proj_{\X\times (i_1,\ldots,i_j)}$  the projection  from
$\X\times \R^d$ to $\X\times\R^j$ defined by
$$(x,\omega,y_1,\ldots,y_d)\mapsto (x,\omega,y_{i_1},\ldots,y_{i_j})$$
and correspondingly, for $\xi\in\mathcal P(\X\times \R^d)$, $\xi^{(i_1,\ldots,i_j)}=\proj_{\X\times
  (i_1,\ldots,i_j)}(\xi).$  When $j=0$, we understand this as simply
the projection onto $\X$. If $(i_1,\ldots,i_j)=(1,\ldots,j)$ we simply write $\xi^{(1,\ldots,j)}=\xi^i$. 
\end{itemize}

\section{Main Results}

\subsection{Existence and Monotonicity Principle} \label{sec:probint}

In this section we present our key results and provide an interpretation in probabilistic terms. To move closer to classical probabilistic notions, in this section, we slightly deviate from the notation used in the rest of the article. We consider a Brownian motion $B$ on some generic probability space and recall that, for each $1\leq i \leq n$,
$$\S{i}:=\{(f,s_1,\ldots,s_i):0\leq s_1\leq\ldots\leq s_i, f\in C([0,s_i])  \}.$$
We note that $\S{i}$ carries a natural Polish topology.
 For a function $\gamma:\S{n} \to \R$ which is Borel and a sequence $(\mu_i)_{i=0}^n$ of centered probability measures on $\R$, increasing in convex order, we are interested in the optimization problem
\begin{align}\label{eq:Pgamma} 
 P_\gamma= \inf\{\E[\gamma( (B_s)_{s\leq \tau_n}, \tau_1, \ldots, \tau_n)]: \tau_1, \dots, \tau_n \text{ satisfy \eqref{MSkoSol}}\} \tag{$\mathsf{OptMSEP}$}.
\end{align}
We denote the set of all minimizers of \eqref{eq:Pgamma} by $\Opt_\gamma$. Take another Borel measurable function $\gamma_2:\S{n}\to\R$. We will be also interested in the secondary optimization problem
\begin{align}\label{eq:secopt}
 P_{\gamma_2|\gamma}=\inf\{\E[\gamma_2( (B_s)_{s\leq \tau_n}, \tau_1, \ldots, \tau_n)]: (\tau_1,\ldots,\tau_n)\in\Opt_\gamma\}. \tag{$\mathsf{OptMSEP}_2$}
\end{align}
Both optimization problems, \eqref{eq:Pgamma} and \eqref{eq:secopt},will not depend on the particular choice of the underlying
probability space, provided that $(\Omega,\F,(\F_t)_{t\geq 0},\P)$ is
sufficiently rich that it supports a Brownian motion $(B_t)_{t \ge 0}$
starting with law  $\mu_0$, and an independent, uniformly distributed random variable $Y$, which is $\F_0$-measurable (see Lemma~\ref{lem:rmst}). We will from now on assume that we are working in this setting. On this space, we denote the filtration generated by the Brownian motion by $\F^B$.

We will usually assume that \eqref{eq:Pgamma} and \eqref{eq:secopt} are
  \emph{well-posed} in the sense that $\E\Big[\gamma\big((B_s)_{s\leq
    \tau_n},\tau_1,\ldots,\tau_n\big)\Big]$ and $\E\Big[\gamma_2\big((B_s)_{s\leq
    \tau_n},\tau_1,\ldots,\tau_n\big)\Big]$ exist with values in $(-\infty,\infty]$ for all
  $\tau=(\tau_1,\ldots,\tau_n)$ which solve \eqref{MSkoSol} and is finite for one such $\tau$.

\begin{theorem}\label{thm:exists}
 Let $\gamma,\gamma_2:\S{n}\to\R$ be lsc and bounded from below. Then, there exists a minimizer to \eqref{eq:secopt}.
\end{theorem}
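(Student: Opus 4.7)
The plan is to apply the direct method of the calculus of variations, after lifting the problem to the space of randomised multi-stopping times (RMSTs) developed in Section~\ref{sec:Si}. The two ingredients I would rely on are Proposition~\ref{prop:RMSTcompact}, asserting that the set $\RMST(\mu_0,\ldots,\mu_n)$ of RMSTs embedding the given marginals is compact in a suitable weak topology, and Lemma~\ref{lem:rmst}, which says that the value of \eqref{eq:Pgamma} (and hence of \eqref{eq:secopt}) is unchanged when the infimum is taken over RMSTs instead of ordinary tuples $(\tau_1,\ldots,\tau_n)$, and that any RMST can be realised as an $n$-tuple of stopping times on any sufficiently rich probability space. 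Together these reduce existence for \eqref{eq:secopt} to existence of a weak minimizer among RMSTs.

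Given lsc $\gamma$ bounded from below, I would first show that the induced functional $\xi\mapsto \xi(\gamma)$ on $\RMST(\mu_0,\ldots,\mu_n)$ is itself lower semicontinuous. After subtracting the lower bound one may assume $\gamma\ge 0$, and then write $\gamma=\sup_k \gamma_k$ as the pointwise supremum of an increasing sequence of bounded continuous functions on $\S{n}$; each $\xi\mapsto \xi(\gamma_k)$ is continuous by the very definition of the weak topology on RMSTs, so $\xi\mapsto \xi(\gamma)$ is lsc as a pointwise supremum of continuous functions. Combined with compactness this yields that the infimum in \eqref{eq:Pgamma} is attained and that $\Opt_\gamma$ is a non-empty, closed, hence compact subset of $\RMST(\mu_0,\ldots,\mu_n)$. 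Applying exactly the same argument to $\gamma_2$ on the compact non-empty set $\Opt_\gamma$ produces an RMST minimizer $\xi^\ast$ of \eqref{eq:secopt}, and Lemma~\ref{lem:rmst} then provides the associated $(\tau_1,\ldots,\tau_n)$ on the enriched probability space that was assumed for the standing setup.

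The main obstacle is not logical but topological: ordinary stopping-time tuples are not closed under weak convergence of the joint law of $((B_s)_{s\le\tau_n},\tau_1,\ldots,\tau_n)$, so compactness cannot be invoked directly on the set of admissible classical tuples. This is precisely the motivation for the RMST framework of Section~\ref{sec:Si}, and once Proposition~\ref{prop:RMSTcompact} and Lemma~\ref{lem:rmst} are in place the present theorem follows from the Weierstrass-type argument sketched above. The well-posedness clause preceding the theorem, together with the lower bound on $\gamma,\gamma_2$, ensures that the relevant infima lie in $(-\infty,+\infty]$ and that minimizing sequences are genuine; otherwise any admissible tuple trivially realises the value $+\infty$ and the statement is vacuous.
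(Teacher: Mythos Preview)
Your proposal is correct and follows essentially the same approach as the paper: the paper's own proof is the one-liner ``This follows from Proposition~\ref{prop:RMSTcompact} and the Portmanteau theorem,'' and you have simply unpacked the Portmanteau step (via approximation of $\gamma$ from below by bounded continuous functions) and made explicit the two-stage application to first obtain a compact non-empty $\Opt_\gamma$ and then minimise $\gamma_2$ over it. Your invocation of Lemma~\ref{lem:rmst} to translate between the RMST formulation and the stopping-time formulation is also the intended bridge; nothing is missing.
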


The condition that the functions $\gamma, \gamma_2$ are bounded below can easily be relaxed (see \cite[Theorem~4.1]{BeCoHu14}). We will prove this result in Section~\ref{sec:optimExists}.

Our main result is the monotonicity principle, Theorem~\ref{thm:monprin2}, which is a \emph{geometric} characterisation of optimizers $\hat\tau=(\hat\tau_1,\ldots,\hat\tau_n)$ of \eqref{eq:secopt}. The version we state here is weaker than the result we will prove in Section \ref{sec:monotonicity} but easier to formulate and still sufficient for our intended applications.

For two families of increasing stopping times $(\sigma_j)_{j=i}^n$ and $(\tau_j)_{j=i}^n$ with $\tau_i=0$ we define
$$k:=\inf\{j\geq i: \tau_{j+1}\geq \sigma_j\}$$
and stopping times
\begin{align*}
\tilde \sigma_j = \left\{\begin{array}{l}
                   \tau_j \text{ if } j\leq k\\
\sigma_j \text{ if } j>k
                  \end{array}\right.
\end{align*}
and analogously 
\begin{align*}
\tilde \tau_j = \left\{\begin{array}{l}
                   \sigma_j \text{ if } j\leq k\\
\tau_j \text{ if } j>k
                  \end{array}\right..
\end{align*}
Note that $(\tilde \sigma_j)_{j=i}^n$ and $(\tilde\tau_j)_{j=i}^n$ are again two families of increasing stopping times, since 
\begin{align}\label{eq:tildetau}
& \tilde\tau_i=\sigma_i\nonumber\\
\leq~ & \tilde\tau_{i+1}=\1_{\tau_{i+1}\geq\sigma_i}\tau_{i+1} + \1_{\tau_{i+1}<\sigma_i} \sigma_{i+1}\nonumber \\ \leq ~ &\tilde\tau_{i+2}=\1_{\tau_{i+1}\geq\sigma_i}\tau_{i+2} + \1_{\tau_{i+1}<\sigma_i} (\1_{\tau_{i+2}\geq\sigma_{i+1}}\tau_{i+2} + \1_{\tau_{i+2}<\sigma_{i+1}} \sigma_{i+2}) \\
\leq~ & \tilde\tau_{i+3}=\ldots,\nonumber
\end{align}
 and similarly for $\tilde\sigma_j$.

\begin{definition}
  A pair $((f,s_1,\ldots, s_{i-1},s), (g,t_1,\ldots,t_{i-1},t))\in\S{i}\times\S{i}$ constitutes an $i$-th stop-go pair, written $((f,s_1,\ldots, s_{i-1},s), (g,t_1,\ldots, t_{i-1},t))\in\SG_i$, if $f(s)=g(t)$ and for all families of $\cF^B$-stopping times $\sigma_{i}\leq \ldots\leq \sigma_n$, $0=\tau_i\leq\tau_{i+1}\leq \ldots\leq \tau_n$  satisfying $0<\E[\sigma_j]<\infty$ for all $i\leq j\leq n$ and $0\le\E[\tau_j]<\infty$ for all $i<j\leq n$
\begin{align*}
&\E[\gamma\left(( (f\oplus B)_{u})_{u\leq s + \sigma_n}, s_1, \ldots, s_{i-1}, s+ \sigma_{i},s+ \sigma_{i+1},\ldots, s+ \sigma_{n} \right)]
\\
+&\ \E[\gamma\left(( (g\oplus B)_{u})_{u\leq t + \tau_n}, t_1\ , \ldots, t_{i-1}\ , t\phantom{+ \sigma_{i} }\ , t\ + \tau_{i+1}\ ,\ldots, t\ + \tau_{n}\ \right)] &\ 
\\
>\ &\E[\gamma\left(( (f\oplus B)_{u})_{u\leq s + \tilde\sigma_n}, s_1, \ldots, s_{i-1}, s \phantom{+ \sigma_{i}},s+ \tilde\sigma_{i+1},\ldots, s+ \tilde\sigma_{n} \right)]
\numberthis \label{eq:SG}\\
+&\ \E[\gamma\left(( (g\oplus B)_{u})_{u\leq t + \tilde\tau_n}, t_1 , \ldots, t_{i-1} , t + \tilde\tau_{i} \ , t\ + \tilde\tau_{i+1}\ ,\ldots, t\ + \tilde\tau_{n}\ \right)],
\end{align*}
whenever both sides are well defined and the left hand side is finite. (See Figure~\ref{fig:BadPairs}.)

\begin{figure}[ht]
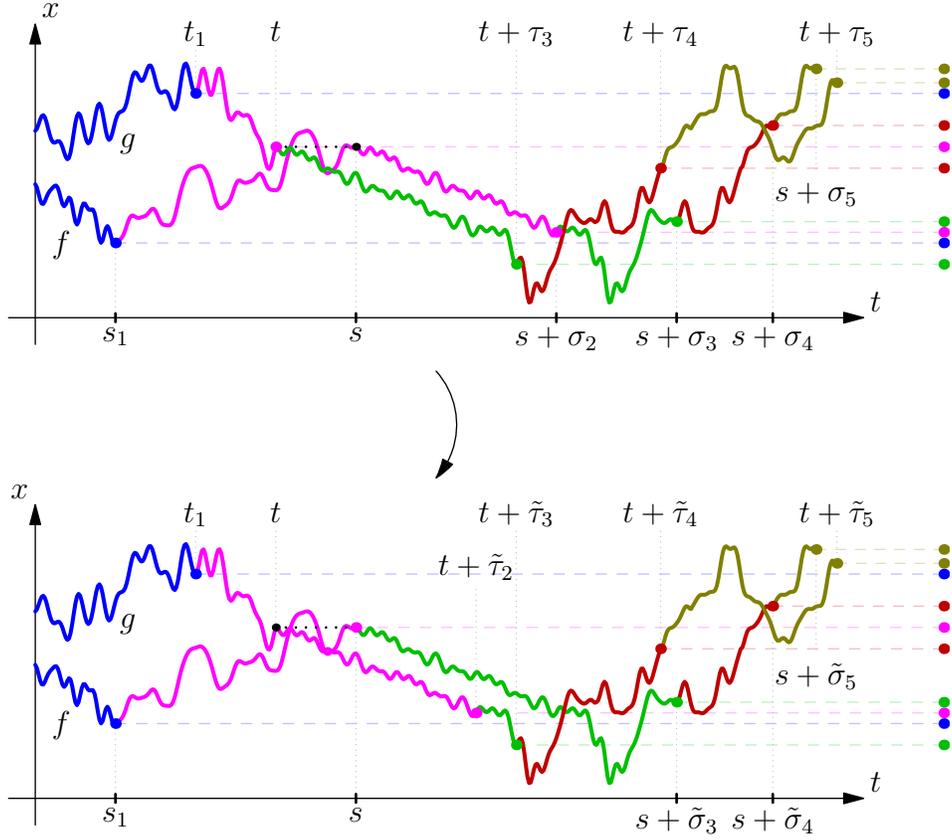

\centering
  \begin{asy}[width=0.99\textwidth]
    import graph;
    import stats;
    import patterns;

    pen ax = black+0.5;
    real eps = 0.25;
    real eps2 = 2.0; // Shift between figures
    real eps3 = 0.04; // Dash sizes

    real xmin = 0;
    real xmax = 2.5;
    real tmax = 7.5;

    //  real[] B1, B2, B3, B4, B5, B6, B7, B8, B9, BX; // Brownian motion
    real[] t; // Time
    int n1, n2, n3, n4, n5, n6, n7, n8, n9, nX; // Time indexes of path starts
    real sig=0.15;
    int N = 500; // Steps
    int K = 8; // ``Smoothness'' of curve. Higher -> rougher

    real dt = tmax/N;

    for(int i = 0; i<=N; ++i)
    {
      t[i] = dt*i;
    }

    path BM_Fourier(int m1, int m2, int K, real xstart, real xfin, real[] t0, real sigma, int seed) {

      srand(seed);

      int N = m2-m1+1;

      real[] gp, gm, b;

      for(int i = 0; i <= K; ++i)
      {
        gp[i] = sigma*Gaussrand();
        gm[i] = sigma*Gaussrand();
      }
      
      // real xfstart, xffin;
      
      real xfstart = 0;
      real xffin = 0;
      for(int i = 0; i <= K; ++i)
      {
        xfstart = xfstart + gp[i]/(i+1)*cos(2*pi*(i+1)*t0[m1]);
        xfstart = xfstart - gm[i]/(i+1)*cos(-2*pi*(i+1)*t0[m1]);

        xffin = xffin + gp[i]/(i+1)*cos(2*pi*(i+1)*t0[m2]);
        xffin = xffin - gm[i]/(i+1)*cos(-2*pi*(i+1)*t0[m2]);
      }
      
      b[0] = xstart;
      b[N-1] = xfin;

      for(int i = 1; i < N; ++i)
      {
        b[i] = 0;
        for(int j = 0; j <= K; ++j)
        {
          b[i] = b[i] + gp[j]/(j+1)*cos(2*pi*(j+1)*t0[m1+i]);
          b[i] = b[i] - gm[j]/(j+1)*cos(2*pi*(j+1)*t0[m1+i]);
        }
        b[i] = b[i] + (xstart -xfstart) + (t0[m1+i]-t0[m1])*(xfin-xffin-xstart+xfstart)/(t0[m2]-t0[m1]);
      }
      
      path P = (t0[m1],b[0]);

      for(int i = 1; i < N; ++i)
      {
        P = P--(t0[m1+i],b[i]);
      }
      return P;
    }
     
    n1 = 0;
    n2 = round(1.0/5*N);
    n3 = round(0.5/5*N);
    n4 = round(2.0/5*N);
    n5 = round(1.5/5*N);
    n6 = round(2.75/5*N);
    n7 = round(3.0/5*N);
    n8 = round(3.5/5*N);
    n9 = round(3.9/5*N);
    nX = round(4.1/5*N);

    path BM1, BM2, BM3, BM4, BM5, BM6, BM7, BM8, BM9, BMX; //

    real x0 = 1.25;
    real x1 = 1.75;
    real x2 = 2.1;
    real x3 = 1.6;
    real x4 = 0.7;
    real x5 = 0.8;
    real x6 = 0.5;
    real x7 = 0.9;
    real x8 = 1.4;
    real x9 = 1.8;
    real xX = 2.2;

    BM1 = BM_Fourier(n1,n2,K,x1,x2,t,sig,102);
    BM2 = BM_Fourier(n2,n5,K,x2,x3,t,sig,123);
    BM3 = BM_Fourier(n1,n3,K,x0,x4,t,sig,98);
    BM4 = BM_Fourier(n3,n4,K,x4,x3,t,sig,24);
    BM5 = BM_Fourier(n5,n6,K,x3,x5,t,sig,110);
    BM6 = BM_Fourier(n6,n7,K,x5,x6,t,sig,80);
    BM7 = BM_Fourier(n7,n8,K,x6,x7,t,sig,70);
    BM8 = BM_Fourier(n8,n9,K,x7,x8,t,sig,60);
    BM9 = BM_Fourier(n9,nX,K,x8,x9,t,sig,50);
    BMX = BM_Fourier(nX,N,K,x9,xX,t,sig,40);

    path tail = subpath(BMX,0,round((N-nX)*0.3));
    pair tail_end = shift(t[n4]-t[n5],0)*point(tail,length(tail));

    //pen BMpen1 = blue+1.5;
    //pen BMpen2 = heavygreen+1.5+dashed;
    //pen BMpen3 = magenta+1.5+dotted;
    //pen BMpen4 = heavyred+1.5+dashdotted;
    //pen BMpen5 = olive+1.5;

    pen BMpen1 = blue+1.5;
    pen BMpen2 = magenta+1.5;
    pen BMpen3 = heavygreen+1.5;
    pen BMpen4 = heavyred+1.5;
    pen BMpen5 = olive+1.5;

    // Shift to lower picture.
    transform T = shift(0,-eps2-xmax+xmin);

    void labdash(real x, real z, string s)
    {
      pair p = (x,0);
      pair q = (x,z);
      label(s,p,S,basealign(1));
      draw(p--q,mediumgray+dotted+0.5);
      draw(shift(0,eps3)*p--shift(0,-eps3)*p,black+1.0);
    }

    void labdash_top(real x, real z, string s)
    {
      pair p = (x,xmax);
      pair q = (x,z);
      label(s,p,(0,1),basealign(1));
      draw(p--q,mediumgray+dotted+0.5);
    }

    void labdash_low(real x, real z, string s)
    {
      pair p = T*(x,0);
      pair q = T*(x,z);
      label(s,p,S,basealign(1));
      draw(p--q,mediumgray+dotted+0.5);
      draw(shift(0,eps3)*p--shift(0,-eps3)*p,black+1.0);
    }

    void labdash_top_low(real x, real z, string s)
    {
      pair p = T*(x,xmax);
      pair q = T*(x,z);
      label(s,p,(0,1),basealign(1));
      draw(p--q,mediumgray+dotted+0.5);
    }

    real eps4 = 1.0;
    real ops = 0.25; // Pen opacity for lines

    // Draw axes in top plot

    draw((-eps,0)--(tmax+eps,0),ax,Arrow);

    draw((0,xmin-eps)--(0,xmax+eps),ax,Arrow);

    label("$t$",(tmax+eps,0),NE);
    label("$x$",(0,xmax+eps),NE);
    //label("$\sigma_1=\sigma_2$",(point(BM,n1).x,xmin),S);
    //label("$\sigma_3$",(point(BM,n3).x,xmin),S);

    // Draw dots on RHS
    
    draw((t[n2],x2)--(tmax+eps4,x2),(BMpen1+0.5+dashed)+opacity(ops));
    dot((tmax+eps4,x2),BMpen1+4.0);

    draw((t[n3],x4)--(tmax+eps4,x4),(BMpen1+0.5+dashed)+opacity(ops));
    dot((tmax+eps4,x4),BMpen1+4.0);
    
    draw((t[n5],x3)--(tmax+eps4,x3),(BMpen2+0.5+dashed)+opacity(ops));
    dot((tmax+eps4,x3),BMpen2+4.0);
    
    draw((t[n6]+(t[n4]-t[n5]),x5)--(tmax+eps4,x5),(BMpen2+0.5+dashed)+opacity(ops));
    dot((tmax+eps4,x5),BMpen2+4.0);
    
    draw((t[n7],x6)--(tmax+eps4,x6),(BMpen3+0.5+dashed)+opacity(ops));
    dot((tmax+eps4,x6),BMpen3+4.0);
    
    draw((t[n8]+(t[n4]-t[n5]),x7)--(tmax+eps4,x7),(BMpen3+0.5+dashed)+opacity(ops));
    dot((tmax+eps4,x7),BMpen3+4.0);
    
    draw((t[n9],x8)--(tmax+eps4,x8),(BMpen4+0.5+dashed)+opacity(ops));
    dot((tmax+eps4,x8),BMpen4+4.0);
    
    draw((t[nX]+(t[n4]-t[n5]),x9)--(tmax+eps4,x9),(BMpen4+0.5+dashed)+opacity(ops));
    dot((tmax+eps4,x9),BMpen4+4.0);
    
    draw((t[N],xX)--(tmax+eps4,xX),(BMpen5+0.5+dashed)+opacity(ops));
    dot((tmax+eps4,xX),BMpen5+4.0);
    
    draw(tail_end--(tmax+eps4,tail_end.y),(BMpen5+0.5+dashed)+opacity(ops));
    dot((tmax+eps4,tail_end.y),BMpen5+4.0);

    // Labels/lines for stopping times in top plot

    labdash_top(t[n2],x2,"$t_1$");
    labdash_top(t[n5],x3,"$t$");
    labdash_top(t[n7],x6,"$t+\tau_3$");
    labdash_top(t[n9],x8,"$t+\tau_4$");
    labdash_top(t[N],xX,"$t+\tau_5$");

    labdash(t[n3],x4, "$s_1$");
    labdash(t[n4],x3, "$s$");
    labdash(t[n6]+(t[n4]-t[n5]),x5, "$s+\sigma_2$");
    labdash(t[n8]+(t[n4]-t[n5]),x7, "$s+\sigma_3$");
    labdash(t[nX]+(t[n4]-t[n5]),x9, "$s+\sigma_4$");

    draw(tail_end--(tail_end.x,1.35),mediumgray+dotted+0.5);
    label("$s+\sigma_5$", (tail_end.x,1.35),S,basealign(1));

    // Draw paths in top picture

    draw(BM1,BMpen1);
    draw(Label("$g$",MidPoint,black+1.0),BM1,opacity(0));
    draw(BM3,BMpen1);
    draw(Label("$f$",MidPoint,SW,black+1.0),BM3,opacity(0));

    draw(BM2,BMpen2);
    draw(BM4--shift(t[n4]-t[n5],0)*BM5,BMpen2);

    draw(BM5--BM6,BMpen3);
    draw(shift(t[n4]-t[n5],0)*(BM6--BM7),BMpen3);

    draw(BM7--BM8,BMpen4);
    draw(shift(t[n4]-t[n5],0)*(BM8--BM9),BMpen4);

    draw(BM9--BMX,BMpen5);
    draw(shift(t[n4]-t[n5],0)*tail,BMpen5);

    //draw(subpath(BM,0,n1),BMpen1);
    //draw(subpath(BM,n1,n2),BMpen2);
    //draw(subpath(BM,n2,n3),BMpen3);
    //draw(subpath(BM,n3,N+1),BMpen4);

    // Add dots to top picture
    draw((t[n5],x3)--(t[n4],x3),dotted+1.0);

    dot((t[n5],x3));
    dot((t[n4],x3));

    dot((t[n2],x2),BMpen1+4.0);
    dot((t[n3],x4),BMpen1+4.0);

    dot((t[n5],x3),BMpen2+4.0);
    dot((t[n6]+(t[n4]-t[n5]),x5),BMpen2+4.0);

    dot((t[n7],x6),BMpen3+4.0);
    dot((t[n8]+(t[n4]-t[n5]),x7),BMpen3+4.0);
    
    dot((t[n9],x8),BMpen4+4.0);
    dot((t[nX]+(t[n4]-t[n5]),x9),BMpen4+4.0);

    dot((t[N],xX),BMpen5+4.0);
    dot(tail_end,BMpen5+4.0);

    // Arrow between plots
    draw((tmax/2,-2*eps)..(tmax*0.52,-eps-eps2/2)..(tmax/2,2*eps-eps2),ax,Arrow);

    // Draw axes and labels for bottom picture

    draw(T*((-eps,0)--(tmax+eps,0)),ax,Arrow);
    draw(T*((0,xmin-eps)--(0,xmax+eps)),ax,Arrow);
    label("$t$",T*(tmax+eps,0),NE);
    label("$x$",T*(0,xmax+eps),NW);

    // Draw dots on RHS, lower picture
    real eps4 = 1.0;
    real ops = 0.25; // Pen opacity for lines
    
    draw(T*((t[n2],x2)--(tmax+eps4,x2)),(BMpen1+0.5+dashed)+opacity(ops));
    dot(T*(tmax+eps4,x2),BMpen1+4.0);

    draw(T*((t[n3],x4)--(tmax+eps4,x4)),(BMpen1+0.5+dashed)+opacity(ops));
    dot(T*(tmax+eps4,x4),BMpen1+4.0);
    
    draw(T*( (t[n4],x3)--(tmax+eps4,x3)),(BMpen2+0.5+dashed)+opacity(ops));
    dot(T*(tmax+eps4,x3),BMpen2+4.0);
    
    draw(T*((t[n6],x5)--(tmax+eps4,x5)),(BMpen2+0.5+dashed)+opacity(ops));
    dot(T*(tmax+eps4,x5),BMpen2+4.0);
    
    draw(T*((t[n7],x6)--(tmax+eps4,x6)),(BMpen3+0.5+dashed)+opacity(ops));
    dot(T*(tmax+eps4,x6),BMpen3+4.0);
    
    draw(T*( (t[n8]+(t[n4]-t[n5]),x7)--(tmax+eps4,x7)),(BMpen3+0.5+dashed)+opacity(ops));
    dot(T*(tmax+eps4,x7),BMpen3+4.0);
    
    draw(T*( (t[n9],x8)--(tmax+eps4,x8)),(BMpen4+0.5+dashed)+opacity(ops));
    dot(T*(tmax+eps4,x8),BMpen4+4.0);
    
    draw(T*( (t[nX]+(t[n4]-t[n5]),x9)--(tmax+eps4,x9)),(BMpen4+0.5+dashed)+opacity(ops));
    dot(T*(tmax+eps4,x9),BMpen4+4.0);
    
    draw(T*( (t[N],xX)--(tmax+eps4,xX)),(BMpen5+0.5+dashed)+opacity(ops));
    dot(T*(tmax+eps4,xX),BMpen5+4.0);
    
    draw(T*( tail_end--(tmax+eps4,tail_end.y)),(BMpen5+0.5+dashed)+opacity(ops));
    dot(T*(tmax+eps4,tail_end.y),BMpen5+4.0);

    // Stopping times labels, lower picture

    labdash_top_low(t[n2],x2,"$t_1$");
    labdash_top_low(t[n5],x3,"$t$");
    labdash_top_low(t[n7],x6,"$t+\tilde{\tau}_3$");
    labdash_top_low(t[n9],x8,"$t+\tilde{\tau}_4$");
    labdash_top_low(t[N],xX,"$t+\tilde{\tau}_5$");
    
    labdash_low(t[n3],x4, "$s_1$");
    labdash_low(t[n4],x3, "$s$");
    //labdash_low((t[n6]+(t[n4]-t[n5])),x5, "$s+\tilde{\sigma}_2$");
    labdash_low((t[n8]+(t[n4]-t[n5])),x7, "$s+\tilde{\sigma}_3$");
    labdash_low((t[nX]+(t[n4]-t[n5])),x9, "$s+\tilde{\sigma}_4$");     

    draw(T*((t[n6],x5)--(t[n6],2.0)),mediumgray+dotted+0.5);
    label("$t+\tilde{\tau}_2$", T*(t[n6],2.0),(0,1),basealign(1));

    draw(T*(tail_end--(tail_end.x,1.35)),mediumgray+dotted+0.5);
    label("$s+\tilde{\sigma}_5$", T*(tail_end.x,1.35),S,basealign(1));

    // Draw paths in bottom plot
    draw(T*BM1,BMpen1);
    draw(Label("$g$",MidPoint,black+1.0),T*BM1,opacity(0));
    draw(T*BM3,BMpen1);
    draw(Label("$f$",MidPoint,SW,black+1.0),T*BM3,opacity(0));

    draw(T*(BM2--BM5),BMpen2);
    draw(T*(BM4),BMpen2);

    draw(T*(BM6),BMpen3);
    draw(T*(shift(t[n4]-t[n5],0)*(BM5--BM6--BM7)),BMpen3);

    draw(T*(BM7--BM8),BMpen4);
    draw(T*(shift(t[n4]-t[n5],0)*(BM8--BM9)),BMpen4);

    draw(T*(BM9--BMX),BMpen5);
    draw(T*(shift(t[n4]-t[n5],0)*tail),BMpen5);

    draw(T*((t[n5],x3)--(t[n4],x3)),dotted+1.0);

    dot(T*(t[n5],x3));
    //dot(T*(t[n4],x3));

    // Draw dots in bottom picture
    dot(T*(t[n2],x2),BMpen1+4.0);
    dot(T*(t[n3],x4),BMpen1+4.0);

    dot(T*((t[n4]),x3),BMpen2+4.0);
    dot(T*(t[n6],x5),BMpen2+4.0);
    
    dot(T*((t[n7]),x6),BMpen3+4.0);
    dot(T*(t[n8]+(t[n4]-t[n5]),x7),BMpen3+4.0);
    
    dot(T*(t[n9],x8),BMpen4+4.0);
    dot(T*(t[nX]+(t[n4]-t[n5]),x9),BMpen4+4.0);

    dot(T*(t[N],xX),BMpen5+4.0);
    dot(T*tail_end,BMpen5+4.0);
  \end{asy}
  \caption{\label{fig:BadPairs}We show a potential `Bad Pair'. In the top picture, we show the pair $((f,s_1, s),(g,t_1,t))$ with corresponding stopping times $\tau_3, \dots, \tau_5$ and $\sigma_2, \dots, \sigma_5$. In the bottom picture, the stopping times $\tilde{\tau}_2, \dots \tilde{\tau}_5$ and $\tilde{\sigma}_3, \dots, \tilde{\sigma_5}$ are shown. Note that the first time that the stopping rules can `revert' to their original times are $\tilde{\tau}_3$ and $\tilde{\sigma}_3$.}
\end{figure}

A pair $((f,s_1,\ldots, s_{i-1},s), (g,t_1,\ldots,t_{i-1},t)) \in\S{i}\times\S{i}$ constitutes a secondary $i$-th stop-go pair, written $((f,s_1,\ldots, s_{i-1},s), (g,t_1,\ldots,t_{i-1},t)) \in\SG_{2,i}$, if $f(s)=g(t)$ and for all families of $\F^B$-stopping times $\sigma_{i}\leq \ldots\leq \sigma_n$, $0=\tau_i\leq\tau_{i+1}\leq \ldots\leq \tau_n$  satisfying $0<\E[\sigma_j]<\infty$ for all $i\leq j\leq n$ and $0\le\E[\tau_j]<\infty$ for all $i<j\leq n$ the inequality \eqref{eq:SG} holds with $\geq$ and if there is equality we have
\begin{align*}
&\E[\gamma_2\left(( (f\oplus B)_{u})_{u\leq s + \sigma_n}, s_1, \ldots, s_{i-1}, s+ \sigma_{i},s+ \sigma_{i+1},\ldots, s+ \sigma_{n} \right)]
\\
+&\ \E[\gamma_2\left(( (g\oplus B)_{u})_{u\leq t + \tau_n}, t_1\ , \ldots, t_{i-1}\ , t\phantom{+ \sigma_{i} }\ , t\ + \tau_{i+1}\ ,\ldots, t\ + \tau_{n}\ \right)] &\ 
\\
>\ &\E[\gamma_2\left(( (f\oplus B)_{u})_{u\leq s + \tilde\sigma_n}, s_1, \ldots, s_{i-1}, s \phantom{+ \sigma_{i}},s+ \tilde\sigma_{i+1},\ldots, s+ \tilde\sigma_{n} \right)]
\numberthis \label{eq:SG2}\\
+&\ \E[\gamma_2\left(( (g\oplus B)_{u})_{u\leq t + \tilde\tau_n}, t_1 , \ldots, t_{i-1} , t + \tilde\tau_{i} \ , t\ + \tilde\tau_{i+1}\ ,\ldots, t\ + \tilde\tau_{n}\ \right)],
\end{align*}
 whenever both sides are well defined and the left hand side (of \eqref{eq:SG2}) is finite.
\end{definition}

For $0\leq i<j\leq n$ we define $\proj_{\S{i}}:\S{j}\to\S{i}$ by $(f,s_1,\ldots,s_j)\mapsto (f_{\llcorner [0,s_i]},s_1,\ldots,s_i)$ where we take $s_0=0$, $\S{0}=\R$, and $f_{\llcorner [0,0]}:=f(0)\in\R$.

\begin{definition}
 A set $\Gamma=(\Gamma_1,\ldots,\Gamma_n)$ with $\Gamma_i\subset\S{i}$ for each $i$ is called $\gamma_2|\gamma$-monotone if for each $1\leq i \leq n$ 
$$ \SG_{2,i}\cap (\Gamma_i^<\times\Gamma_i)=\emptyset,$$
where
$$\Gamma_i^<=\{(f,s_1,\ldots,s_{i-1},u): \text{ there exists } (g,s_1,\ldots,s_{i-1},s)\in\Gamma_i, s_{i-1}\leq u <s, g_{\llcorner [0,u]}=f\},$$
and $\proj_{\S{i-1}}(\Gamma_i)\subset \Gamma_{i-1}$.
\end{definition}

\begin{theorem}[Monotonicity principle]\label{thm:monprin2}
 Let $\gamma,\gamma_2:\S{n}\to\R$ be Borel measurable, $B$ be a Brownian motion on some stochastic basis $(\Omega,\F,(\F_t)_{t\geq 0},\P)$ with $B_0 \sim \mu_0$ and let $\hat\tau=(\hat\tau_1,\ldots,\hat\tau_n)$ be an optimizer of \eqref{eq:secopt}. Then there exists a $\gamma_2|\gamma$-monotone set $\Gamma=(\Gamma_1,\ldots,\Gamma_n)$ supporting $\hat\tau$ in the sense that $\P$- a.s.\ for all $1\leq i\leq n$ 
\begin{align}\label{eq:monprin2}
 ((B_s)_{s\leq\tau_i},\tau_1,\ldots,\tau_i)\in\Gamma_i.
\end{align}
\end{theorem}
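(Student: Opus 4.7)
The approach is by contradiction, adapting the one-marginal monotonicity principle of \cite{BeCoHu14} to the multi-marginal setting. By Lemma~\ref{lem:rmst} we may realize $\hat\tau$ as a randomized multi-stopping time $\hat\xi$ on the canonical enlarged space $\bar\X$. For each $1\leq i\leq n$ let $\hat\xi^i\in\mathcal P(\S{i})$ denote the law of the stopped path $((B_s)_{s\leq\hat\tau_i},\hat\tau_1,\ldots,\hat\tau_i)$; these are compatible, $\proj_{\S{i-1}}(\hat\xi^i)=\hat\xi^{i-1}$, and the statement we want amounts to producing Borel sets $\Gamma_i\subset\S{i}$ with $\hat\xi^i(\Gamma_i)=1$, $\proj_{\S{i-1}}(\Gamma_i)\subset\Gamma_{i-1}$, and $\SG_{2,i}\cap(\Gamma_i^<\times\Gamma_i)=\emptyset$ for every $i$.

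Suppose no such family exists. A downward exhaustion argument on the level $i$, combined with a disintegration of $\hat\xi^i$ along the endpoint coordinate $f(s)$ and a Jankov--von~Neumann measurable selection, then produces some $i$ and a Borel map $(g,t_1,\ldots,t_{i-1},t)\mapsto(f,s_1,\ldots,s_{i-1},s)$ defined on a set of positive $\hat\xi^i$-mass $\kappa>0$ whose graph lies in $\SG_{2,i}$, where moreover the image paths $(f,s_1,\ldots,s_{i-1},s)$ are still running in $\hat\xi$ at level $i$. One then constructs a competitor $\hat\xi'$ by excising mass $\kappa$ from both branches and swapping their continuations: paths originally following $g$ are rerouted through the continuation scheme prescribed for $f$ and vice versa, with the ``revert when caught up'' modification $(\tilde\sigma,\tilde\tau)$ from \eqref{eq:tildetau} inserted to keep the new stopping times ordered.

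Since $f(s)=g(t)$, Brownian increments are independent of the past, and $(\tilde\sigma,\tilde\tau)$ has the same joint distribution of stopping levels as $(\sigma,\tau)$, the modified $\hat\xi'$ still embeds the prescribed marginals $\mu_i,\ldots,\mu_n$ (the marginals up to level $i-1$ are untouched). Integrating the pointwise stop-go inequality \eqref{eq:SG} against the selection measure then forces $\E[\gamma]$ under $\hat\xi'$ to be strictly smaller than under $\hat\xi$, contradicting $\hat\tau\in\Opt_\gamma$; if instead equality holds in \eqref{eq:SG} throughout, the secondary inequality \eqref{eq:SG2} delivers the same contradiction at the level of $\gamma_2$. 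The principal obstacles are (i) the joint measurable selection of bad pairs across all levels, which is what makes working on the randomized canonical space $\bar\X$ essential, and (ii) verifying that $\hat\xi'$ remains an admissible randomized multi-stopping embedding of every $\mu_j$; the $(\tilde\sigma,\tilde\tau)$ mechanism is designed precisely for this purpose, and is also the reason the definition of $\SG_i$ had to quantify over arbitrary monotone families of future stopping times rather than over single times as in the one-marginal case.
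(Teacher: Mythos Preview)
Your plan has the right backbone --- pass to the canonical randomized setting via Lemma~\ref{lem:rmst}, argue by contradiction, and build a competitor by swapping continuations with the $(\tilde\sigma,\tilde\tau)$ device --- and this is indeed the paper's strategy. But there is a genuine gap at the step where you pass from ``no monotone family $\Gamma$ exists'' to ``there is a Borel map $(g,\ldots)\mapsto(f,\ldots)$ on a set of positive $\hat\xi^i$-mass whose graph sits in $\SG_{2,i}$ and whose image consists of paths still running under $\hat\xi$.''

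The problem is that a Jankov--von~Neumann selection only gives you, for each stopped $g$, \emph{some} path $f$ with $((f,\ldots),(g,\ldots))\in\SG_{2,i}$. It gives you no control over whether that $f$ carries positive ``still-running'' mass under the actual disintegration of $\hat\xi$; in fact the selected $f$'s could all lie in an $r_{i-1}(\hat\xi^{i-1})$-evanescent set, in which case excising mass from the $f$-branch is impossible and no competitor can be built. The negation of ``there exist full-measure sets $\Gamma_i$ with $\SG_{2,i}\cap(\Gamma_i^<\times\Gamma_i)=\emptyset$'' does not by itself hand you a pair of \emph{compatible} positive-mass sets on the two factors.

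The paper handles this by introducing a separate object, the set $\TRT(r_{i-1}(\xi^{i-1}),r_i(\xi^i))$ of \emph{joinings} (Definition~\ref{def:joinings}), and splitting your single step into two: first (Proposition~\ref{prop:modification}) one shows that \emph{every} joining $\pi$ assigns zero mass to the stop-go set --- this is where the competitor $\xi^\pi=\tfrac12(\xi^E+\xi^L)$ is built, with $\pi$ playing exactly the role your selection was meant to play, but now automatically coupling running mass on the first factor with stopped mass on the second. Second (Proposition~\ref{prop:kele}, a Kellerer-type section theorem), one shows that a Borel set $E\subset S_\sX\times\Ysf$ which is null for \emph{all} joinings must be covered by an evanescent set on the first factor union a null set on the second; the complements of these give the $\Gamma_i$. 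This second step is the non-trivial converse that your selection argument is implicitly assuming.

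So the fix is not a matter of more careful bookkeeping: you need either the joinings framework plus Proposition~\ref{prop:kele}, or an equivalent two-sided section argument, to bridge the gap.
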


\begin{remark}
We will also consider ternary or j-ary optimization problems given $j$ Borel measurable functions $\gamma_1,\ldots,\gamma_j:\S{n}\to\R$ leading to ternary or j-ary $i$-th stop-go pairs $\SG_{i,3},\ldots,\SG_{i,j}$ for $1\leq i \leq n,$ the notion of $\gamma_j|\ldots|\gamma_1$-monotone sets and a corresponding monotonicity principle. To save (digital) trees we leave it to the reader to write down the corresponding definitions.
\end{remark}

\subsection{New $n$-marginal embeddings}

\subsubsection{The $n$-marginal Root embedding}

The classical Root embedding \cite{Ro69} establishes the existence of a barrier (or right-barrier) $\mathcal R\subset \R_+\times\R$ such that the first hitting time of $\mathcal R$ solves the Skorokhod embedding problem. A barrier $\mathcal R$ is a Borel set such that $(s,x)\in\mathcal R \Rightarrow (t,x)\in\mathcal R$ for all $t>s$. Moreover, the Root embedding has the property that it minimises $\E[h(\tau)]$ for a strictly convex function $h:\R_+\to\R$ over all solutions to the Skorokhod embedding problem, cf.\ \cite{Ro76}.

 We will show that there is a unique $n$- marginal Root embedding in the sense that there are $n$ barriers $(\mathcal R^i)_{i=1}^n$ such that for each $i\leq n$ the first hitting time of $\mathcal R^i$ \emph{after} hitting $\mathcal R^{i-1}$ embeds $\mu_i$.

\begin{theorem}[$n$-marginal Root embedding, c.f.\ \cite{CoObTo15}]\label{thm:Root1}
 Put $\gamma_i:\S{n}\to\R, (f,s_1,\ldots,s_n)\mapsto h(s_i)$ for some strictly convex function $h:\R_+\to\R$ and assume that \eqref{eq:Pgamma} is well posed. Then there exist $n$ barriers $(\mathcal R^i)_{i=1}^n$ such that defining
\begin{align*}
& \tau^{Root}_1(\omega)=\inf\{t\geq 0: (t,B_t(\omega))\in \mathcal R^1\}
\end{align*}
and for $1<i\leq n$
\begin{align*}
 \tau^{Root}_i(\omega)=\inf\{t\geq \tau^{Root}_{i-1}(\omega) : (t,B_t(\omega))\in \mathcal R^i\}
\end{align*}
the multi-stopping time $(\tau^{Root}_1,\ldots,\tau^{Root}_n)$
minimises
$$\E[h(\tilde\tau_i)]$$ 
simultaneously for all $1\leq i\leq n$ among all increasing families of stopping times $(\tilde \tau_1,\ldots,\tilde\tau_n)$ such that $B_{\tilde \tau_j}\sim\mu_j$ for all $1\leq j\leq n$. This solution is unique in the sense that for any solution $\tilde \tau_1, \ldots, \tilde \tau_n$ of such a barrier-type we have $\tau^{Root}_i=\tilde \tau_i$ a.s.
\end{theorem}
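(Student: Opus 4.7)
The plan is to combine Theorem~\ref{thm:exists} with the $n$-ary extension of Theorem~\ref{thm:monprin2} (noted in the remark following it), applied to the lsc cost functions $\gamma_i(f,s_1,\ldots,s_n):=h(s_i)$, $1\le i\le n$, in some lexicographic hierarchy. For concreteness I would take $\hat\tau=(\hat\tau_1,\ldots,\hat\tau_n)$ to minimise $\E[h(\tau_1)]$ over all solutions of \eqref{MSkoSol}, then $\E[h(\tau_2)]$ among those minimisers, and so on; existence follows by iterating Theorem~\ref{thm:exists}, and the monotonicity principle produces a $\gamma_n|\cdots|\gamma_1$-monotone set $\Gamma=(\Gamma_1,\ldots,\Gamma_n)$ supporting $\hat\tau$ in the sense of \eqref{eq:monprin2}.

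The decisive observation is that for each $1\le i\le n$, every pair $((f,s_1,\ldots,s_{i-1},s),(g,t_1,\ldots,t_{i-1},t))\in\S{i}\times\S{i}$ with $f(s)=g(t)$ and $s>t$ is an $n$-ary $i$-th stop-go pair \emph{under any ordering} of the cost functions $\gamma_1,\ldots,\gamma_n$. Indeed, for admissible $\sigma,\tau$ as in the stop-go definition, each individual difference $\gamma_j(\text{LHS})-\gamma_j(\text{RHS})$ in \eqref{eq:SG} is nonnegative: for $j<i$ the $j$-th stopping time is untouched by the swap; for $j=i$, $\tilde\sigma_i=0$ and $\tilde\tau_i=\sigma_i$, so strict convexity of $h$ together with $s>t$ and $\E[\sigma_i]>0$ gives the strict inequality
\[
\E[h(s+\sigma_i)]+h(t)\;>\;h(s)+\E[h(t+\sigma_i)];
\]
for $i<j\le k^*:=\inf\{\ell\ge i:\tau_{\ell+1}\ge\sigma_\ell\}$ the definition of $k^*$ forces $\sigma_j>\tau_j$ pointwise, whence by strict convexity $h(s+\sigma_j)+h(t+\tau_j)>h(s+\tau_j)+h(t+\sigma_j)$; and for $j>k^*$ the swap leaves the $j$-th stopping times unchanged. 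Since the $j=i$ contribution is strictly positive and no $\gamma_j$ ever gives $\text{LHS}<\text{RHS}$, the lexicographic check in any ordering yields a strict inequality before any tie is broken against us, establishing the stop-go property.

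Consequently the monotone set condition forbids such pairs in $\Gamma_i^<\times\Gamma_i$: if $(g,\ldots,t)\in\Gamma_i$ and $(f,\ldots,s)\in\Gamma_i^<$ with $f(s)=g(t)$, then $s\le t$. I define the $i$-th barrier as
\[
\mathcal R^i\;:=\;\bigl\{(r,x)\in\R_+\times\R:\exists\,(g,t_1,\ldots,t_{i-1},t)\in\Gamma_i\text{ with }g(t)=x,\,t\le r\bigr\},
\]
a right-barrier by construction that contains the $i$-th stopping locations of $\hat\tau$, and establish $\hat\tau_i=\inf\{r\ge\hat\tau_{i-1}:(r,B_r)\in\mathcal R^i\}$ $\P$-a.s.: the $\le$ direction is immediate from $(\hat\tau_i,B_{\hat\tau_i})\in\mathcal R^i$, while an earlier entry at $r<\hat\tau_i$ would create a continuing path at $(r,B_r)$ together with a stopped path $(g,\ldots,t)\in\Gamma_i$ at the same spatial level with $t\le r$, contradicting the forbidden-pair property (the boundary case $t=r$ is resolved by a right-limit argument using continuity of Brownian paths). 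Uniqueness among right-barrier embeddings follows inductively by the standard Loynes argument (intersecting two candidate right-barriers yields a right-barrier with a hitting time dominated by both and still embedding the prescribed marginal, forcing equality). For simultaneous optimality, apply the same construction with $\gamma_k$ placed first in the hierarchy (any secondary ordering works thanks to the ordering-independence established above): the resulting optimiser minimises $\E[h(\tau_k)]$ over all solutions of \eqref{MSkoSol}, and is itself a right-barrier embedding of $(\mu_0,\ldots,\mu_n)$, so by uniqueness it coincides $\P$-a.s.\ with $\hat\tau$.

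The main obstacle, in my view, is the third step---translating the combinatorial forbidden-pair property on $\Gamma_i\subset\S{i}$ into the pointwise barrier identity for $\hat\tau_i$. This requires choosing a full-$\P$-measure version of $\Gamma$, verifying measurability and the required right-closedness of $\mathcal R^i$, and carefully handling the boundary case $s=t$ in the stop-go analysis. The remaining ingredients---iterated existence, the convexity-based stop-go identification (which becomes transparent once the swap recipe $(\tilde\sigma,\tilde\tau)$ is dissected term-by-term as above), Loynes-type barrier uniqueness, and the reordering argument for simultaneous optimality---are essentially standard once the barrier structure is in place.
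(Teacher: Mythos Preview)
Your proposal follows the same overall strategy as the paper: apply the $n$-ary monotonicity principle to the hierarchy $\gamma_1,\ldots,\gamma_n$ (in some order), identify the stop-go pairs via strict convexity of $h$, build right-barriers from the support sets $\Gamma_i$, and invoke Loynes-type uniqueness to obtain permutation-independence and hence simultaneous optimality.

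Your stop-go verification is in fact slightly cleaner than the paper's. You show directly that for every index $j$ the swap never worsens $\E[h(\cdot_j)]$ and strictly improves it at $j=i$; this yields membership in $\SG_{i,n}$ for \emph{every} lexicographic ordering at once. The paper instead fixes a permutation $\kappa$, locates the first level (in $\kappa$-order, among indices $\ge i$) at which the swap acts with positive probability, and proves strict inequality there. Both rest on the same pointwise fact (on $\{j\le k\}$ one has $\sigma_j>\tau_j$), so this is a cosmetic difference.

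Where you are vague---and rightly flag as the main obstacle---the paper is explicit. Rather than a single barrier plus a ``right-limit argument using continuity'', one introduces both
\[
\mathcal R^i_\cl=\{(r,x):\exists (g,\ldots,t)\in\Gamma_i,\ g(t)=x,\ r\ge t\},\qquad
\mathcal R^i_\op=\{(r,x):\exists (g,\ldots,t)\in\Gamma_i,\ g(t)=x,\ r> t\},
\]
obtains $\tau^i_\cl\le\hat\tau_i\le\tau^i_\op$ (the right inequality is exactly your forbidden-pair argument, now with a strict $t<r$ so the boundary case disappears), and then proves $\tau^i_\cl=\tau^i_\op$ a.s.\ via the strong Markov property and the instantaneous-return property of one-dimensional Brownian motion, as in \cite[Theorem~2.1]{BeCoHu14}. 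Continuity alone does not settle the $t=r$ case; you should replace your sketch of this step by the two-barrier device.
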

\begin{proof}
Fix a permutation $\kappa$ of $\{1,\ldots,n\}$. We consider the functions $\tilde\gamma_1=\gamma_{\kappa(1)},\ldots,\tilde\gamma_n=\gamma_{\kappa(n)}$ on $\S{n}$  and the corresponding family of $n$-ary minimisation problems, ($\mathsf{OptMSEP}_n$). Let $(\tau^{Root}_1,\ldots,\tau^{Root}_n)$ be an optimiser of $P_{\tilde\gamma_n|\ldots|\tilde\gamma_1}$. By the $n$-ary version of Theorem~\ref{thm:exists}, choose an optimizer $(\tau^{Root}_1,\ldots,\tau^{Root}_n)$ of ($\mathsf{OptMSEP}_n$) and, by the corresponding version of Theorem~\ref{thm:monprin2},  a $\tilde\gamma_n|\ldots|\tilde\gamma_1$-monotone family of sets  $(\Gamma_1,\ldots,\Gamma_n)$ supporting $(\tau^{Root}_1,\ldots,\tau^{Root}_n)$. Hence for every $i\leq n$ we have  $\P$-a.s.
$$((B_s)_{s\leq\tau_i},\tau^{Root}_1,\ldots,\tau^{Root}_i)\in\Gamma_i,$$
 and 
$$(\Gamma^{<}_i\times \Gamma_i)\cap\SG_{i,n}=\emptyset.$$
We claim that, for all $1\leq i\leq n$ we have
$$\SG_{i,n}\supset\{\left((f,s_1,\ldots,s_i),(g,t_1,\ldots,t_i)\right): f(s_i)=g(t_i), s_i>t_i\}.$$
Fix $(f,s_1,\ldots,s_i),(g,t_1,\ldots,t_i)\in \S{i}$ satisfying $s_i>t_i$ and consider two families of stopping times $(\sigma_j)_{j=i}^n$ and $(\tau_j)_{j=i}^n$ on some probability space $(\Omega,\cF,\P)$ together with their modifications $(\tilde\sigma_j)_{j=i}^n$ and $(\tilde\tau_j)_{j=i}^n$ as in Section \ref{sec:probint}. Put
$$j_1:=\inf\{m\geq 1 : \kappa(m) \geq i\}$$
and inductively for $1< a\leq n-i+1$
$$ j_a:=\inf\{m\geq j_{a-1} : \kappa(m)\geq i\}.$$
Let $ l=\argmin\{a: \P[\sigma_{j_a}\neq \tilde\sigma_{j_a}]>0\}.$
By the definition of $\tilde \sigma_j$ and $\tilde\tau_j$ we have in case of $j_l=i$ the equality $\{\sigma_{j_l}\neq\tilde\sigma_{j_l}\} = \Omega$ and for $j_l>i$ it holds that
$$ \{\sigma_{j_l}\neq\tilde\sigma_{j_l}\} = \bigcap_{i\leq k <j_l} \{\sigma_k > \tau_{k+1}\}.$$
As $\tau_{k}\leq \tau_{k+1}$, in particular, we have on $\{\sigma_{j_l}\neq\tilde\sigma_{j_l}\}$ the inequality $\sigma_k > \tau_k$ for every $i \le k\leq j_l$.
The strict convexity of $h$ and $s>t$ implies
$$ \E[h(s+\sigma_{j_l})] + \E[h(t+\tau_{j_l})]>\E[h(s+\tilde\sigma_{j_l})] + \E[h(t+\tilde\tau_{j_l})]~.$$
Hence, we get a strict inequality in (the corresponding $\kappa^{-1}(j_l)$-ary version of) \eqref{eq:SG2} and the claim is proven.

Then we define for each $1\leq i\leq n$
$$\mathcal R_\cl^i:=\{(s,x)\in\R_+\times\R: \exists (g,t_1,\ldots,t_i)\in \Gamma_i, g(t_i)=x, s\geq t_i\}$$
and
$$\mathcal R_\op^i:=\{(s,x)\in\R_+\times\R: \exists (g,t_1,\ldots,t_i)\in \Gamma_i, g(t_i)=x, s > t_i\}.$$
Following the argument in the proof of Theorem 2.1 in \cite{BeCoHu14}, we define 
$\tau^1_\cl$ and $\tau^1_\op$ to be the first hitting times of $\mathcal R^1_\cl$ and $\mathcal R^1_\op$ respectively to see that actually $\tau^1_\cl\leq\tau^{Root}_1\leq\tau_\op^1$ and $\tau^1_\cl=\tau_\op^1$ a.s. by the strong Markov property. Then we can inductively proceed and define
$$\tau^i_\cl:=\inf\{t\geq \tau^{i-1}_\cl : (t,B_t)\in\mathcal R^i_\cl\}$$
and
$$\tau^i_\op:=\inf\{t\geq \tau^{i-1}_\cl : (t,B_t)\in\mathcal R^i_\op\}.$$
By the very same argument we see that $\tau^i_\cl\leq \tau_i^{Root} \leq \tau^i_\op$ and in fact $\tau^i_\cl=\tau^i_\op.$

Finally, we need to show that the choice of the permutation $\kappa$ does not matter. This follows from a straightforward adaptation of the argument of Loynes \cite{Lo70} (see also \cite[Remark 2.2]{BeCoHu14} and \cite[Proof of Lemma~2.4]{CoObTo15}) to the multi-marginal set up. Indeed, the first barrier $\mathcal R^1$ is unique by Loynes original argument. This implies that the second barrier is unique because Loynes argument is valid for a general starting distribution of the process $(t,B_t)$ in $\R_+\times\R$ and we can conclude by induction. 
\end{proof}

\begin{remark} \label{rk:opt-prob}
  \begin{enumerate}
  \item  In the last theorem, the result stays the same if we take different strictly convex functions $h_i$ for each $i$.
  \item\label{it:opt-prob-permute} Moreover, it is easy to see that the proof is simplified if one starts with the objective $\sum_{i=1}^n h_i(\tau_i)$, which removes the need for taking an arbitrary permutation of the indices at the start. Of course, to get the more general conclusion, one needs to consider these permutations.
  \end{enumerate}
\end{remark}

\begin{corollary}
Let $h:\R_+\to\R$ be a strictly convex function and let $\gamma:\S{n}\to\R, (f,s_1,\ldots,s_n)\mapsto \sum_{i=1}^n h(t_i).$ Let $\tau^{Root}=(\tau^{Root}_1,\ldots,\tau^{Root}_n)$ be the minimizer of Theorem \ref{thm:Root1}. Then it also minimizes 
$$ \E[\gamma(\tilde\tau_1,\ldots,\tilde\tau_n)]$$
among all increasing families of stopping times $\tilde\tau_1\leq\ldots\leq\tilde\tau_n$ satisfying $B_{\tilde\tau_i}\sim\mu_i$ for all $1\leq i\leq n$.
\end{corollary}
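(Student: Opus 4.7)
The plan is to observe that the corollary is an immediate consequence of the \emph{simultaneous} optimality statement already contained in Theorem~\ref{thm:Root1}. By linearity of expectation, for any increasing family $\tilde\tau_1\le\ldots\le\tilde\tau_n$ of stopping times embedding the marginals one has
\begin{equation*}
\E[\gamma(\tilde\tau_1,\ldots,\tilde\tau_n)]=\sum_{i=1}^n \E[h(\tilde\tau_i)].
\end{equation*}
Theorem~\ref{thm:Root1} asserts that $\tau^{Root}_i$ minimises $\E[h(\tilde\tau_i)]$ over exactly this class of embeddings, for each $i$ separately and using the \emph{same} family $\tau^{Root}$. Hence termwise comparison yields
\begin{equation*}
\sum_{i=1}^n \E[h(\tau^{Root}_i)]\le \sum_{i=1}^n \E[h(\tilde\tau_i)],
\end{equation*}
which is the desired inequality.

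The only point that requires a brief comment is well-posedness: I would note that if the secondary problem for the sum is not a priori assumed well-posed, it follows from that of each individual functional $(f,s_1,\ldots,s_n)\mapsto h(s_i)$. Indeed, under the standing assumption of convex order and finite second moments one has $\E[\tau_n^{Root}]<\infty$, so by Jensen, $\E[h(\tau^{Root}_i)]$ is finite whenever the associated problem in Theorem~\ref{thm:Root1} is well-posed, and the sum is then finite for $\tau^{Root}$ and lies in $(-\infty,\infty]$ for every competitor.

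There is no real obstacle here; the work is entirely carried out by Theorem~\ref{thm:Root1}. This matches the observation made in Remark~\ref{rk:opt-prob}\eqref{it:opt-prob-permute}: once one has simultaneous minimisation for each coordinate (which required introducing the permutations $\kappa$ in the proof of Theorem~\ref{thm:Root1}), minimisation of any non-negative linear combination, in particular of $\sum_i h(s_i)$, is automatic.
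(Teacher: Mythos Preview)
Your argument is correct and matches the paper's reasoning: the corollary is stated without proof precisely because it follows immediately from the simultaneous optimality in Theorem~\ref{thm:Root1} by summing the termwise inequalities, exactly as you do. Your side remark on well-posedness is slightly off (Jensen gives a lower, not upper, bound for convex $h$), but this is irrelevant since well-posedness of each $\gamma_i$ is already assumed in Theorem~\ref{thm:Root1} and carries over to the sum.
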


\subsubsection{The $n$-marginal Rost embedding}

The classical Rost embedding \cite{Ro76} establishes  the existence of an inverse barrier (or left-barrier) $\mathcal R\subset \R_+\times\R$ such that the first hitting time of $\mathcal R$ solves the Skorokhod embedding problem. An inverse barrier $\mathcal R$ is a Borel set such that $(t,x)\in\mathcal R \Rightarrow (s,x)\in\mathcal R$ for all $s<t$. Moreover, the Rost embedding   has the property that it maximises $\E[h(\tau)]$ for a strictly convex function $h:\R_+\to\R$ over all solutions to the Skorokhod embedding problem, cf.\ \cite{Ro76}. Similarly to the Root embedding it follows that

\begin{theorem}[$n$-marginal Rost embedding]\label{thm:Rost1}
 Put $\gamma_i:\S{n}\to\R, (f,s_1,\ldots,s_n)\mapsto -h(s_i)$ for some strictly convex function $h:\R_+\to\R$ and assume that \eqref{eq:Pgamma} is well posed. Then there exist $n$  inverse barriers $(\mathcal R^i)_{i=1}^n$ such that defining
\begin{align*}
& \tau^{Rost}_1(\omega)=\inf\{t\geq 0: (t,B_t(\omega))\in \mathcal R^1\}
\end{align*}
and for $1<i\leq n$
\begin{align*}
 \tau^{Rost}_i(\omega)=\inf\{t\geq \tau^{Rost}_{i-1}(\omega) : (t,B_t(\omega))\in \mathcal R^i\}
\end{align*}
the multi-stopping time $(\tau^{Rost}_1,\ldots,\tau^{Rost}_n)$
maximises
$$\E[h(\tau_i)]$$ 
simultaneously for all $1\leq i\leq n$ among all increasing families of stopping times $( \tau_1,\ldots,\tau_n)$ such that $B_{ \tau_j}\sim\mu_j$ for all $1\leq j\leq n$. Moreover, it also maximises
$$\sum_{i=1}^n\E\left[ h(\tau_i)\right].$$
This solution is unique in the sense that for any solution $\tilde \tau_1, \ldots, \tilde \tau_n$ of such a barrier-type we have $\tau^{Rost}_i=\tilde \tau_i$.
\end{theorem}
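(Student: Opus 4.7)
The plan is to transcribe the proof of Theorem~\ref{thm:Root1} almost verbatim, with two linked modifications. First, since $\gamma_i=-h(s_i)$ and $h$ is strictly convex, the direction of the key stop-go inequality flips, so that the relevant bad pairs are those with $s_i<t_i$ rather than $s_i>t_i$. Second, this reversed direction produces inverse (left) barriers in place of right barriers.

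Concretely, I would first fix a permutation $\kappa$ of $\{1,\ldots,n\}$, set $\tilde\gamma_a=\gamma_{\kappa(a)}$, and invoke the $n$-ary versions of Theorem~\ref{thm:exists} and Theorem~\ref{thm:monprin2} to obtain an optimizer $(\tau^{Rost}_1,\ldots,\tau^{Rost}_n)$ of the iterated minimization problem together with a supporting $\tilde\gamma_n|\ldots|\tilde\gamma_1$-monotone family $(\Gamma_1,\ldots,\Gamma_n)$. The central computation is the inclusion
$$\SG_{i,n}\supset\{((f,s_1,\ldots,s_i),(g,t_1,\ldots,t_i))\in\S{i}\times\S{i}: f(s_i)=g(t_i),\ s_i<t_i\}.$$
Given such a pair and families $(\sigma_j)_{j=i}^n$, $(\tau_j)_{j=i}^n$ with the modifications $(\tilde\sigma_j),(\tilde\tau_j)$ from Section~\ref{sec:probint}, let $j_1<\ldots<j_{n-i+1}$ enumerate $\kappa^{-1}(\{i,\ldots,n\})$ and $l=\argmin\{a:\P[\sigma_{j_a}\neq\tilde\sigma_{j_a}]>0\}$. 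Exactly as in the Root proof, on $\{\sigma_{j_l}\neq\tilde\sigma_{j_l}\}$ one has $\sigma_{j_l}>\tau_{j_l}$; strict convexity of $h$ then renders $x\mapsto h(x+\sigma_{j_l})-h(x+\tau_{j_l})$ strictly increasing, so $s_i<t_i$ forces
$$h(t_i+\sigma_{j_l})-h(t_i+\tau_{j_l})>h(s_i+\sigma_{j_l})-h(s_i+\tau_{j_l}).$$
Negating and taking expectations yields the strict inequality in the $\kappa^{-1}(j_l)$-ary version of \eqref{eq:SG2}.

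Armed with the inclusion, I would define the inverse (left) barriers
$$\mathcal R^i_\cl:=\{(s,x)\in\R_+\times\R:\exists (g,t_1,\ldots,t_i)\in\Gamma_i,\ g(t_i)=x,\ s\leq t_i\},$$
$$\mathcal R^i_\op:=\{(s,x)\in\R_+\times\R:\exists (g,t_1,\ldots,t_i)\in\Gamma_i,\ g(t_i)=x,\ s< t_i\},$$
and follow the Root blueprint step by step: inductively define $\tau^i_\cl,\tau^i_\op$ as the first hitting times of $\mathcal R^i_\cl,\mathcal R^i_\op$ after $\tau^{i-1}_\cl$, and argue via the strong Markov property that $\tau^i_\cl\leq\tau^{Rost}_i\leq\tau^i_\op$ and $\tau^i_\cl=\tau^i_\op$ almost surely. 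Uniqueness of the barriers falls out of the inductive Loynes monotone-class argument as for Root. Since $\kappa$ was arbitrary, the optimizer $(\tau^{Rost}_i)$ maximizes $\E[h(\tau_i)]$ simultaneously in $i$, and the additional assertion about $\sum_{i=1}^n\E[h(\tau_i)]$ is then immediate.

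The hard part will be the stop-go computation: one must trace the cascaded swap defining $(\tilde\sigma_j),(\tilde\tau_j)$ through an arbitrary permutation $\kappa$ and verify that the sign flip relative to Root propagates correctly. Everything else is a direct translation of the Root argument, replacing \emph{right barrier} by \emph{left barrier} throughout.
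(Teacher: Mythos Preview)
Your proposal is correct and follows essentially the same route as the paper's proof, which explicitly states that the argument proceeds along the very same lines as Theorem~\ref{thm:Root1}, with the sole difference that the reversed sign yields the inclusion $\SG_{i,n}\supset\{((f,s_1,\ldots,s_i),(g,t_1,\ldots,t_i)): f(s_i)=g(t_i),\ s_i<t_i\}$ and hence inverse barriers. Your identification of the sign flip in the stop-go computation and the corresponding switch from right to left barriers is exactly the content the paper omits.
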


The proof of this theorem goes along the very same lines as the proof of Theorem \ref{thm:Root1}. The only difference is that due to the maximisation we get
$$\SG_{i,n}\supset\{(f,s_1,\ldots,s_i),(g,t_1,\ldots,t_i): f(s_i)=g(t_i), s_i<t_i\}$$
leading to inverse barriers. We omit the details.

\subsubsection{The $n$-marginal Az\'ema-Yor embedding}

For $(f,s_1,\ldots,s_n)\in \S{n}$ we will use the notation $\bar f_{s_i}:=\max_{0\leq s\leq s_i} f(s).$

\begin{theorem}[$n$-marginal Az\'ema-Yor solution]\label{thm:AzemaYor}
 There exist $n$ barriers $(\mathcal R^i)_{i=1}^n$ such that defining
\begin{align*}
& \tau^{AY}_1=\inf\{t\geq 0: (\bar B_t,B_t)\in \mathcal R^1\}
\end{align*}
and for $1<i\leq n$
\begin{align*}
 \tau^{AY}_i=\inf\{t\geq \tau^{AY}_{i-1} : (\bar B_t,B_t)\in \mathcal R^i\}
\end{align*}
the multi-stopping time $(\tau^{AY}_1,\ldots,\tau^{AY}_n)$
maximises
$$\E \left[\sum_{i=1}^n\bar B_{\tau_i}\right]$$ 
among all increasing families of stopping times $( \tau_1,\ldots,\tau_n)$ such that $B_{ \tau_j}\sim\mu_j$ for all $1\leq j\leq n$.
This solution is unique in the sense that for any solution $\tilde \tau_1, \ldots, \tilde \tau_n$ of such a barrier-type we have $\tau^{AY}_i=\tilde \tau_i$.
\end{theorem}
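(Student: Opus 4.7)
The plan mirrors the proof of Theorem~\ref{thm:Root1}, replacing the scalar time coordinate by the two-dimensional process $(\bar B_t, B_t)$. For each $i$ define $\gamma_i(f, s_1, \ldots, s_n) := -\bar f_{s_i}$; each $\gamma_i$ is continuous on $\S{n}$ and bounded below by $-\bar f_{s_n}$, and well-posedness of the relevant $n$-ary problem follows from Doob's $L^2$ maximal inequality together with $\mu_n \in L^2$ (guaranteed by the convex-order assumption). Following the strategy of Theorem~\ref{thm:Root1}, I fix a permutation $\kappa$ of $\{1, \ldots, n\}$, form the $n$-ary problem with lexicographic objectives $(\gamma_{\kappa(1)}, \ldots, \gamma_{\kappa(n)})$, and apply the $n$-ary versions of Theorem~\ref{thm:exists} and Theorem~\ref{thm:monprin2} to obtain an optimiser $(\tau^{AY}_1, \ldots, \tau^{AY}_n)$ supported by a monotone family $(\Gamma_1, \ldots, \Gamma_n)$ satisfying $(\Gamma_i^< \times \Gamma_i) \cap \SG_{i,n} = \emptyset$ for every $i$.

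The technical core is the characterisation of $\SG_{i,n}$: I claim that
$$\SG_{i,n} \supseteq \{((f, s_1, \ldots, s_{i-1}, s),(g, t_1, \ldots, t_{i-1}, t)) \in \S{i} \times \S{i} : f(s) = g(t),\; \bar f_s > \bar g_t\}.$$
Fix such a pair and set $M_f := \bar f_s$, $M_g := \bar g_t$, $x := f(s) = g(t)$, so that $M_f > M_g \ge x$. The identity $\overline{h \oplus B}_{u+\rho} = \max(M_h, x + \sup_{r \le \rho} B_r)$ for $h \in \{f, g\}$ reduces the quantity LHS$-$RHS of \eqref{eq:SG} to $\E[\Delta]$, where on the event $\{k = l\}$ the integrand $\Delta$ is
$$\sum_{j=i}^{l} \bigl[\max(M_f, a_j) + \max(M_g, b_j) - \max(M_f, b_j) - \max(M_g, a_j)\bigr],$$
with $a_j := x + \bar B_{\tau_j}$ and $b_j := x + \bar B_{\sigma_j}$. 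By the definition of $k$ we have $\tau_j \le \sigma_j$ for $j \in [i, l]$ on $\{k = l\}$, hence $a_j \le b_j$, and supermodularity of $(M, u) \mapsto \max(M, u)$ forces each summand to be non-negative. Strict positivity in expectation is secured at the index $j = i$, where $a_i = x$ and the summand equals $(b_i - M_g)^+ - (b_i - M_f)^+$; this is strictly positive on $\{\bar B_{\sigma_i} > M_g - x\}$, an event of positive measure since $\E[\sigma_i] > 0$ and $M_f > M_g \ge x$. This establishes the stop-go inclusion.

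With the stop-go pairs characterised, the barriers are defined exactly as in Theorem~\ref{thm:Root1}, but in the joint state space $(m, x)$:
$$\mathcal R^i_\cl := \{(m, x) \in \R_+ \times \R : \exists (g, t_1, \ldots, t_i) \in \Gamma_i \text{ with } g(t_i) = x,\; \bar g_{t_i} \le m\},$$
and $\mathcal R^i_\op$ with the inequality replaced by $\bar g_{t_i} < m$. These are right-barriers in the maximum coordinate $m$, so the iterated hitting times $\tau^i_\cl, \tau^i_\op$ of $(\bar B_t, B_t)$ after $\tau^{AY}_{i-1}$ are well defined. The sandwich $\tau^i_\cl \le \tau^{AY}_i \le \tau^i_\op$ is obtained from the monotone condition as in the Root case, and a.s.\ equality $\tau^i_\cl = \tau^i_\op$ follows from the strong Markov property of the joint process $(\bar B, B)$. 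Uniqueness within the barrier class is then proved inductively by a Loynes-type argument on $(\bar B, B)$: $\mathcal R^1$ is unique by Loynes' original argument for a general starting distribution on $\R_+ \times \R$, and since the joint law of $(\bar B, B)_{\tau^{AY}_{i-1}}$ is then determined, one inductively obtains $\mathcal R^2, \ldots, \mathcal R^n$; independence from $\kappa$ follows as in Theorem~\ref{thm:Root1}.

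The main obstacle I foresee is the stop-go analysis. In the Root case, strict convexity of $h$ yields strict inequality automatically at the single index where $\sigma$ and $\tau$ first disagree; here the analogous pointwise rearrangement only delivers $\ge 0$ summand-wise, and the strict inequality has to be located carefully at $j = i$ using both $M_f > M_g \ge x$ and the non-triviality of $\sigma_i$. A secondary technical point is that the Loynes-type uniqueness must be performed for the two-dimensional Markov process $(\bar B, B)$ rather than for $B$ alone; this is a standard adaptation from the single-marginal AY setting, but one which has to be carried out carefully in the iterated multi-marginal context.
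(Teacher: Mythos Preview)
Your stop-go analysis has a genuine gap. You claim that the event $\{\bar B_{\sigma_i} > M_g - x\}$ has positive probability ``since $\E[\sigma_i]>0$ and $M_f>M_g\ge x$'', but this is false whenever $M_g>x$: take $\sigma_i$ to be the exit time of $B$ from $(-\varepsilon,(M_g-x)/2)$, which has positive finite expectation while $\bar B_{\sigma_i}\le (M_g-x)/2$ almost surely. With this $\sigma_i$ (and, say, $\tau_j\equiv 0$, $\sigma_j\equiv\sigma_i$ for $j\ge i$) every summand in your $\Delta$ vanishes identically, so the primary inequality \eqref{eq:SG} is an equality and the pair is \emph{not} in $\SG_i$. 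The case $M_g>x$ is generic for $i\ge 2$ (and already for $i=1$ under a nontrivial starting law). The paper fixes this by introducing a \emph{secondary} objective $\tilde\gamma(f,s_1,\dots,s_n)=\sum_i\phi(\bar f_{s_i})f(s_i)^2$ with $\phi$ bounded, continuous and strictly increasing, and works with $\SG_{i,2}$: whenever the primary objective yields only equality (precisely the scenario where the running maxima do not change), the secondary objective gives
\[
\phi(M_f)\E[(x+B_{\sigma_j})^2]+\phi(M_g)\E[(x+B_{\tau_j})^2]
\ >\ \phi(M_f)\E[(x+B_{\tilde\sigma_j})^2]+\phi(M_g)\E[(x+B_{\tilde\tau_j})^2],
\]
using $\E[(x+B_\rho)^2]=x^2+\E[\rho]$, $\sigma_j>\tau_j$ on the swap set, and $\phi(M_f)>\phi(M_g)$. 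Your $n$-ary lexicographic scheme with the objectives $\gamma_{\kappa(m)}=-\bar f_{s_{\kappa(m)}}$ does not help here: each individual $\gamma_{\kappa(m)}$ suffers the same degeneracy, so no level in the hierarchy ever produces a strict inequality for such stopping times.

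There is a second gap in the identification $\tau^i_\cl=\tau^i_\op$. The Root argument you invoke works because the first coordinate $t$ is strictly increasing, so the process $(t,B_t)$ started on $\partial\mathcal R_\cl$ enters $\mathcal R_\op$ immediately. For $(\bar B,B)$ the first coordinate is constant whenever $B_t<\bar B_t$, and the strong Markov property alone does not close the gap. The paper proceeds by reducing the barrier to a boundary function $\psi^i$, then splitting the law of $(\bar B_{\tau^{AY}_{i-1}},B_{\tau^{AY}_{i-1}})$ into \emph{free} particles (which can set a new maximum before hitting the next barrier; here the single-marginal AY argument applies) and \emph{trapped} particles (below $\psi^i$, where the process moves only vertically until absorption); the latter case is handled by a separate lemma on vertically moving Brownian motion hitting horizontal barriers, and requires checking inductively that the horizontal projection of $\pi^i=\law(\bar B_{\tau^{AY}_i},B_{\tau^{AY}_i})$ restricted to the off-diagonal is atomless.
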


We emphasise that this result has not appeared previously in the literature in this generality; previously the most general result was due to \cite{HLOb12} and \cite{ObSp13}, which proved a closely related result under an additional condition on the measures, which is not necessary here.

\begin{remark}
 In fact, similarly to the $n$-marginal Root and Rost solutions $\tau^{AY}$ simultaneously solves the optimization problems
$$\sup\{\E [\bar B_{\tilde\tau_i}]: \tilde\tau_1\leq\ldots \leq \tilde\tau_n, B_{\tilde\tau_1}\sim \mu_1, \ldots,B_{\tilde\tau_n}\sim \mu_n\}$$
for each $i$ which of course implies  Theorem \ref{thm:AzemaYor} (see also Remark~\ref{rk:opt-prob}.\ref{it:opt-prob-permute}). To keep the presentation readable, we only prove the less general version.
\end{remark}

\begin{proof}
Fix a bounded and strictly increasing continuous function $\phi:\R_+\to\R_+$ and consider the  continuous functions $\gamma(f,s_1,\ldots,s_n)=-\sum_{i=1}^n \bar f_{s_i}$ and $\tilde\gamma(f,s_1,\ldots,s_n)=\sum_{i=1}^n \phi(\bar f_{s_i})f(s_i)^2$ defined on $\S{n}$.  Pick, by Theorem \ref{thm:exists}, a minimizer $\tau^{AY}$ of \eqref{eq:secopt} and, by Theorem \ref{thm:monprin2}, a $\tilde\gamma|\gamma$-monotone family of sets $(\Gamma_i)_{i=1}^n$  supporting $\tau^{AY}=(\tau^{AY}_i)_{i=1}^n$ such that for all $i\leq n$
$$\SG_{i,2}\cap(\Gamma_i^{<}\times\Gamma_i)=\emptyset.$$
We claim that
\begin{align}\label{eq:SGAY}
\SG_{i,2}\supset \{ ((f,s_1,\ldots,s_i),(g,t_1,\ldots,t_i))\in \S{i}\times \S{i}: f(s_i)=g(t_i), \bar f_{s_i}>\bar g_{t_i} \}.
\end{align}
Indeed, pick $((f,s_1,\ldots,s_i),(g,t_1,\ldots,t_i))\in \S{i}\times \S{i}$ with $f(s_i)=g(t_i)$ and $\bar f_{s_i}>\bar g_{t_i}$ and take two families of stopping times $(\sigma_j)_{j=i}^n$ and $(\tau_j)_{j=i}^n$  together with their modifications $(\tilde\sigma_j)_{j=i}^n$ and $(\tilde\tau_j)_{j=i}^n$ as in Section \ref{sec:probint}. We assume that they live on some probability space $(\Omega,\cF,\P)$ additionally supporting a standard Brownian motion $W$. Observe that (as written out in the proof of Theorem \ref{thm:Root1}) on $\{\sigma_j\neq \tilde\sigma_j\}$ it holds that $\sigma_j>\tau_j$. Hence, on this set we have $\bar W_{\sigma_j}\geq \bar W_{\tau_j}$. This implies that for  $\omega\in\{\sigma_j\neq \tilde\sigma_j\}$ (and hence $\tilde\sigma_j=\tau_j,\tilde\tau_j=\sigma_j$)
\begin{equation}\label{eq:AYinequality}
  \begin{split}
    \bar f_{s_i} 
    & \vee(f(s_i) + \bar W_{\sigma_j}(\omega))  + \bar g_{t_i}\vee(g(t_i) + \bar W_{\tau_j}(\omega))\\
    & \leq \bar f_{s_i}\vee(f(s_i) + \bar W_{\tilde\sigma_j}(\omega))  + \bar g_{t_i}\vee(g(t_i) + \bar W_{\tilde\tau_j}(\omega)),
  \end{split}
\end{equation}
with a strict inequality unless either $\bar W_{\sigma_j}(\omega)\leq \bar g_{t_i}-g(t_i)$ or $\bar W_{\tau_j}\geq \bar f_{s_i}-f(s_i).$ 
On  the set $\{\sigma_j=\tilde\sigma_j\}$ we do not change the stopping rule for the $j$-th stopping time and hence we get a (pathwise) equality in \eqref{eq:AYinequality}.
Thus, we always have a strict  inequality in \eqref{eq:SG2} unless a.s.\ either $\bar W_{\sigma_j}(\omega)\leq \bar g_{t_i}-g(t_i)$ or  $\bar W_{\tau_j}\geq \bar f_{s_i}-f(s_i)$ for all $j$. However, in that case
we have for all $j$ such that $\P[\sigma_j\neq\tilde\sigma_j]>0$ (there is at least one such $j$, namely $j=i$)
\begin{align*}
& \E\left[\phi(\bar f_{s_i})(f(s_i)+W_{\sigma_j})^2\right]  +  \E\left[\phi(\bar g_{t_i})(g(t_i) +  W_{\tau_j})^2\right]\\
 >\ & \E\left[\phi(\bar f_{s_i})(f(s_i) +  W_{\tilde\sigma_j})^2\right]  +  \E\left[\phi(\bar g_{t_i})(g(t_i) + W_{\tilde\tau_j})^2\right].
\end{align*}
Hence, $((f,s_1,\ldots,s_i),(g,t_1,\ldots,t_i))\in\SG\subset \SG_2$ in the first case and in the second case we have  $((f,s_1,\ldots,s_i),(g,t_1,\ldots,t_i))\in\SG_2$  proving \eqref{eq:SGAY}.

For each $i\leq n$ we define 
$$\mathcal R^i_{\op}:=\{(m,x):\exists (f,s_1,\ldots,s_i)\in\Gamma_i, f(s_i)=x, \bar f_{s_i}<m\}$$
and
$$\mathcal R^i_{\cl}:=\{(m,x):\exists (f,s_1,\ldots,s_i)\in\Gamma_i, f(s_i)=x, \bar f_{s_i}\leq m\}$$
with respective hitting times ($\tau^0=0$)
$$\tau^i_\op:=\inf\{t\geq \tau^{i-1}_\cl: (\bar B_t,B_t)\in\mathcal R^i_\op \}$$
 and 
$$\tau^i_\cl:=\inf\{t\geq \tau^{i-1}_\cl: (\bar B_t,B_t)\in\mathcal R^i_\cl \}.$$ We will show inductively on $i$ that firstly $\tau^i_\cl\leq \tau_i^{AY} \leq \tau^i_\op$ a.s.\ and secondly $\tau^i_\cl=\tau^i_\op$ a.s.\ proving the theorem. The case $i=1$ has been settled in \cite{BeCoHu14}. So let us assume $\tau^{i-1}_\cl=\tau^{i-1}_\op$ a.s. Then $\tau^i_\cl\leq\tau_i^{AY}$ follows from the definition of $\tau^i_\cl$.  To show that $\tau^{AY}_i\leq \tau_{\op}^i$ pick $\omega$ satisfying $((B_s(\omega))_{s\leq\tau_i^{AY}},\tau_1^{AY}(\omega),\ldots,\tau_i^{AY}(\omega))\in\Gamma_i$ and assume that $\tau^i_\op(\omega)<\tau_i^{AY}(\omega).$ Then there exists $s\in \big[\tau^i_\op(\omega),\tau_i^{AY}(\omega)\big)$ such that $f:=(B_r(\omega))_{r\leq s}$ satisfies $(\bar f, f(s))\in \mathcal{R}^i_\op$. Since $\tau^{i-1}_\cl(\omega)\leq\tau^i_\op(\omega)\leq s < \tau_i^{AY}(\omega)$ we have $(f,\tau^1_\cl(\omega),\ldots,\tau^{i-1}_\cl(\omega),s)\in \Gamma_i^{<}$. By definition of $\mathcal{R}^i_\op$, there 
exists $(g,t_
1,\ldots,t_i)\in \Gamma_i$ such that $f(s)= g(t_i)$ and $\bar g_{t_i} < \bar f_{s}$, yielding a contradiction to \eqref{eq:SGAY}.

Finally, we need to show that $\tau^i_\cl=\tau^i_\op$ a.s. 
 Before we proceed we give a short reminder of the case $i=1$ from \cite[Theorem 6.5]{BeCoHu14}. We define
\begin{equation*}
  \tilde\psi^1_0(m) = \sup\{x: \exists (m,x) \in \mathcal{R}^1_\cl\}.
\end{equation*}
From the definition of $\mathcal{R}^1_\cl$, we see that $\tilde\psi^1_0(m)$ is increasing, and we define the right-continuous function $\psi^1_+(m) = \tilde\psi^1_0(m+)$, and the left-continuous function $\psi^1_-(m) = \tilde\psi^1_0(m-)$. It follows from the definitions of $\tau^1_{\op}$ and $\tau^1_{\cl}$ that:
\begin{equation*}
  \tau_+ := \inf\{t \ge 0: B_t \le \psi^1_+(\ol{B}_t)\} \le \tau^1_{\cl} \le \tau^1_{\op} \le  \inf\{t \ge 0: B_t < \psi^1_-(\ol{B}_t)\} =: \tau_-.
\end{equation*}
As $\tilde\psi^1_0$ has at most countably many jump points (discontinuity points) it is easily checked that $\tau_- = \tau_+$ a.s.\ and hence $\tau^1_\cl=\tau^1_\op= \tau_1^{AY}$. Note also that the law $\bar \mu^1$ of $\bar B_{\tau_1^{AY}}$ can have an atom only at the rightmost point of its support. Hence, with $\pi^1:= \law(\bar B_{\tau_1^{AY}}, B_{\tau_1^{AY}})$, the measure $\pi^1_{\llcorner \{(x,y): y<x\}}$ has continuous projection onto the horizontal axis. 

Defining these quantities in obvious analogy for $j\in \{2, \ldots, n\}$, we need to prove $\tau^{i+1}_\cl=\tau^{i+1}_\op= \tau_{i+1}^{AY}$ assuming  that  $\pi^{i}$  has continuous projection onto the horizontal axis.  
To do so, we decompose $\pi^i$ into \emph{free} and \emph{trapped} particles $$ \pi^{i}_f:=\pi^i_{\llcorner \{(m,x):x > \psi^i_-(m)\}}, 
\quad \pi^{i}_t:=\pi^i_{\llcorner \{(m,x):x \leq \psi^i_-(m)\}}.$$

Here $\pi^i_f$ refers to particles which are free to reach a new maximum, while $\pi^i_t$ refers to particles which are trapped in the sense that they will necessarily hit $\mathcal R _\op^i$ (and thus also $\mathcal R _\cl^i$) before they reach a new maximum. For particles started in $\pi^i_f$ it follows precisely as above that the hitting times of $\mathcal R _\op^{i+1}$ and $\mathcal R _\cl^{i+1}$ agree.  For particles started in $\pi^i_t$ this is a consequence of Lemma~\ref{BarrierEquality}. Additionally, as above we find  that  $\pi^{i+1}_{\llcorner \{(x,y): y<x\}}$  has continuous projection onto the horizontal axis.  
\end{proof}

\begin{lemma}{\cite[Lemma~3.2]{HuSt16}}\label{BarrierEquality}
Let $\mu$ be a probability measure on $\R^2$ such that the projection onto the horizontal axis $\proj_x \mu$ is continuous (in the sense of not having atoms) and let $\psi: \R\to \R$ be a Borel function. Set 
$$R^\op:= \{(x,y): x>\psi(y)\},\quad R^\cl:= \{(x,y): x\geq\psi(y)\}.$$
Start a vertically moving Brownian motion $B$ in $\mu$ and define
$$ \tau^\op:= \inf\{ t\geq 0: (x, y+B_t)\in R^\op\}, \quad \tau^\cl:= \inf\{ t\geq 0: (x, y+B_t)\in R^\cl\}.$$
Then $\tau^\cl=\tau^\op$ almost surely.
\end{lemma}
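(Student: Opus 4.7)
The plan is to combine the strong Markov property, Blumenthal's $0$-$1$ law, and a countability argument driven by the no-atom hypothesis on $\proj_x \mu$. For a fixed starting point $(x,y)$, let $A_x := \psi^{-1}((-\infty, x])$ and $A^0_x := \psi^{-1}((-\infty, x))$, so that $\tau^\cl$ and $\tau^\op$ are the hitting times by $(y + B_t)_{t \ge 0}$ of $A_x$ and $A^0_x$, respectively. Since $A^0_x \subset A_x$ we automatically have $\tau^\op \ge \tau^\cl$, and the task is to show equality $(\mu \otimes \P)$-almost surely.

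First I would invoke the strong Markov property at $\tau^\cl$. Setting $Z := y + B_{\tau^\cl}$, the difference $\tau^\op - \tau^\cl$ equals the hitting time of $A^0_x$ by a fresh Brownian motion started at $Z$. By Blumenthal's $0$-$1$ law this hitting time vanishes almost surely iff $Z$ is regular for $A^0_x$; and for one-dimensional Brownian motion, a point $z$ is regular for a Borel set $E$ precisely when $z \in \overline{E}$ (one direction is immediate from a neighborhood argument; the other uses that $1$D Brownian motion instantaneously visits every point of a random open neighborhood of its start). Path continuity forces $Z \in \overline{A_x}$ a.s., so matters reduce to proving $\overline{A_x} \subseteq \overline{A^0_x}$, and hence equality, for $\proj_x \mu$-a.e.\ $x$.

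The heart of the argument, and the step I expect to be the main obstacle, is identifying the set of ``bad'' $x$. Define the countable set
\[
  V^{*} := \bigl\{\, \inf\nolimits_{(a,b)} \psi \,:\, a, b \in \Q,\ a < b \,\bigr\} \subset \R.
\]
I would then show that $\overline{A_x} \neq \overline{A^0_x}$ forces $x \in V^{*}$: picking $z \in \overline{A_x} \setminus \overline{A^0_x}$, some open neighborhood of $z$ is disjoint from $A^0_x$, so $\psi \ge x$ on that neighborhood; choose a rational subinterval $(a,b) \ni z$ inside it. Then $\psi \ge x$ on $(a,b)$ and, since $z \in \overline{A_x}$, the set $A_x \cap (a, b)$ is non-empty and must therefore meet $\psi^{-1}(\{x\})$, giving $\inf_{(a,b)} \psi = x$. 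Morally, $V^{*}$ records the ``local minimum values'' of $\psi$, which is the only obstruction to regularity.

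Since $V^{*}$ is countable and $\proj_x \mu$ has no atoms, $(\proj_x \mu)(V^{*}) = 0$. Hence for $\mu$-a.e.\ $(x, y)$ we have $\overline{A_x} = \overline{A^0_x}$, so $Z \in \overline{A^0_x}$ almost surely, and Blumenthal's $0$-$1$ law yields $\tau^\op = \tau^\cl$ almost surely, completing the proof.
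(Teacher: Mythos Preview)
The paper does not prove this lemma; it merely quotes it from \cite{HuSt16}. So there is no ``paper's own proof'' to compare against, and your argument has to stand on its own --- which it does.

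Your reduction via the strong Markov property at $\tau^{\cl}$ is sound (on $\{\tau^{\cl}<\infty\}$; the complementary event is trivial). The characterisation of regular points for one-dimensional Brownian motion that you invoke is correct and worth stating cleanly: since the range $\{y+B_s:0\le s\le t\}$ is almost surely an interval strictly containing $y+B_0$ for every $t>0$, any point of $\overline{E}$ is hit instantaneously, while any point outside $\overline{E}$ obviously is not. The identification of the obstruction set $V^*=\{\inf_{(a,b)}\psi : a<b,\ a,b\in\Q\}$ is the heart of the matter, and your argument that $\overline{A_x}\neq\overline{A_x^0}$ forces $x\in V^*$ is correct: a point $z\in\overline{A_x}\setminus\overline{A_x^0}$ has a rational neighbourhood $(a,b)$ on which $\psi\ge x$, and the approximating sequence from $A_x$ eventually lands in $(a,b)$, forcing $\psi=x$ there and hence $\inf_{(a,b)}\psi=x$. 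Countability of $V^*$ and the atomlessness of $\proj_x\mu$ then finish the job.

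Two small points you might tighten in a final write-up: first, make explicit that $\tau^{\cl}$ is a stopping time for the \emph{completed} Brownian filtration (debut theorem), since $A_x$ is merely Borel, so that the strong Markov property is available; second, when you pass from ``$Z$ is regular for $A_x^0$'' to ``$\tau^{\op}-\tau^{\cl}=0$ a.s.'', note that you are conditioning on the $\mathcal F_{\tau^{\cl}}$-measurable value of $Z$ and using that the regularity indicator $\mathbf 1_{\{z\in\overline{A_x^0}\}}$ is a deterministic function of $z$. Neither is a gap, just a place where a reader might pause.
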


\subsubsection{The $n$-marginal Perkins/Hobson-Pedersen embedding}

For $(f,s_1,\ldots,s_n)\in \S{n}$ we will use the notation
$\ul f_{s_i}:=\min_{0\leq s\leq s_i} f(s)$ to denote the
running minimum of the path up to time $s_i$. (Recall also that
$\bar f_{s_i}$ is the maximum of the path). In this section we will
consider a generalisation of the embeddings of Perkins \cite{Pe86} and
Hobson and Pedersen \cite{HoPe02}. The construction of Perkins to
solve the one-marginal problem with a trivial starting law can be
shown to simultaneously minimise $\E[h(\bar{B}_\tau)]$ for any
increasing function $h$, and maximise $\E[k(\ul{B}_\tau)]$ for any
increasing function $k$, over all solutions of the embedding
problem. Later Hobson and Pedersen \cite{HoPe02} described a closely
related construction which minimised $\E[h(\bar{B}_\tau)]$ over all
solutions to the SEP with a general starting law. The solution of
Perkins took the form:
\begin{equation*}
  \tau^P := \inf \{ t \ge 0: B_t \le \gamma_-(\bar{B}_t) \text{ or } B_t \ge \gamma_+(\ul{B}_t)\}
\end{equation*}
for decreasing functions $\gamma_+, \gamma_-$. Hobson and Pedersen
constructed, for the case of a general starting distribution, a
stopping time
\begin{equation*}
  \tau^{HP} := \inf \{ t \ge 0: B_t \le \gamma_-(\bar{B}_t) \text{ or } \ol{B}_t \ge G\}
\end{equation*}
where $G$ was an appropriately chosen, $\F_0$-measurable random
variable. They showed that $\tau^{HP}$ minimised $\E[h(\bar{B}_\tau)]$
for any increasing function, but it is clear that the second
minimisation does not hold in general. In \cite[Remark~2.3]{HoPe02},
the existence of a version of Perkins' construction for a general
starting law is conjectured. Below we will show that the construction
of Hobson and Pedersen can be generalised to the multi-marginal case,
and sketch an argument that there are natural generalisations of the
Perkins embedding to this situation, but argue that
there is no `canonical' generalisation of the Perkins embedding. To be
more specific, for given increasing functions $h,k$, the embedding(s)
which maximise $\E[k(\ul{B}_{\tau_n})]$ over all solutions to the
multi-marginal embedding problem which minimise
$\E[h(\bar{B}_{\tau_n})]$ will typically differ from the embeddings
which minimise $\E[h(\bar{B}_{\tau_n})]$ over all maximisers of
$\E[k(\ul{B}_{\tau_n})]$.

\begin{theorem}[$n$-marginal `Hobson-Pedersen'
  solution]\label{thm:Perkins}

 There exist $n$ left-barriers $(\mathcal R^i)_{i=1}^n$ and stopping
  times $\tau_1^*\le \tau_2^* \le \dots \le \tau_n^*$ where
  $\tau_i^*<\infty$ implies $B_{\tau^*_i} = \bar{B}_{\tau_i^*}$ such that
  \begin{align*}
    & \tau^{HP}_1 = \inf\{t\geq 0: (\bar B_t, B_t)\in \mathcal R^1\}\wedge \tau_1^*
  \end{align*}
  and for $1<i\leq n$
  \begin{align*}
    \tau^{HP}_i=\inf\{t\geq \tau^{HP}_{i-1} : (\bar B_t, B_t)\in \mathcal R^i\} \wedge \tau_i^*
  \end{align*}
  the multi-stopping time $(\tau^{HP}_1,\ldots,\tau^{HP}_n)$ minimises
  $$\E \left[\sum_{i=1}^n\bar B_{\tau_i}\right]$$ 
  among all increasing families of stopping times
  $(\tau_1,\ldots,\tau_n)$ such that $B_{ \tau_j}\sim\mu_j$ for all
  $1\leq j\leq n$.
\end{theorem}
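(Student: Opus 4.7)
My plan is to mirror the proof of Theorem~\ref{thm:AzemaYor}, reversing the direction of optimization. Set $\gamma(f,s_1,\ldots,s_n) := \sum_{i=1}^n \bar f_{s_i}$ so that \eqref{eq:Pgamma} is the minimization problem in the statement, and pair it with a secondary cost $\tilde\gamma$ that breaks ties among paths with the same running maximum; a natural choice is $\tilde\gamma(f,s_1,\ldots,s_n) = -\sum_i \phi(\bar f_{s_i})(\bar f_{s_i}-f(s_i))^2$ for a bounded strictly increasing continuous $\phi$. Theorem~\ref{thm:exists} produces an optimizer $(\tau_1^{HP},\ldots,\tau_n^{HP})$ and Theorem~\ref{thm:monprin2} yields a $\tilde\gamma|\gamma$-monotone family $(\Gamma_1,\ldots,\Gamma_n)$ supporting it.

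The first real step is to identify the stop-go pairs. I expect
\[
\SG_i \supset \{((f,s_1,\ldots,s_i),(g,t_1,\ldots,t_i))\in\S{i}\times\S{i} : f(s_i)=g(t_i),\ \bar f_{s_i} < \bar g_{t_i}\},
\]
with $\SG_{i,2}$ covering the residual equality cases via $\tilde\gamma$. The verification is the symmetric counterpart of the AY computation: on $\{\sigma_j\neq\tilde\sigma_j\}$ one has $\sigma_j>\tau_j$ and therefore $\bar W_{\sigma_j}\geq \bar W_{\tau_j}$, so that the rearrangement argument yields the \emph{reversed} pairing inequality
\[
\bar f_{s_i}\vee(f(s_i)+\bar W_{\sigma_j})+\bar g_{t_i}\vee(g(t_i)+\bar W_{\tau_j}) \geq \bar f_{s_i}\vee(f(s_i)+\bar W_{\tilde\sigma_j})+\bar g_{t_i}\vee(g(t_i)+\bar W_{\tilde\tau_j}),
\]
strict unless $\bar W_{\sigma_j}\leq \bar g_{t_i}-g(t_i)$ or $\bar W_{\tau_j}\geq \bar f_{s_i}-f(s_i)$; in the remaining coincidence events the secondary cost $\tilde\gamma$ produces the strict inequality in \eqref{eq:SG2} exactly as in the AY proof.

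Next I define the candidate left-barriers
\[
\mathcal R^i_{\op}:=\{(m,x):\exists (f,s_1,\ldots,s_i)\in\Gamma_i,\ f(s_i)=x,\ \bar f_{s_i}>m\},
\]
and $\mathcal R^i_{\cl}$ with $\bar f_{s_i}\geq m$. The stop-go inclusion immediately forces the left-barrier property in the $\bar B$-direction, and the inductive argument of Theorem~\ref{thm:AzemaYor} (based on $\proj_{\S{i-1}}(\Gamma_i)\subset \Gamma_{i-1}$ and the strong Markov property) gives $\tau^i_\cl \leq \tau^{HP}_i\leq \tau^i_\op$ for the hitting times of $\mathcal R^i_\cl, \mathcal R^i_\op$ after $\tau^{i-1}_\cl$.

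The hard part, and the reason the statement has a different shape from the AY theorem, is the handling of the discrepancy between $\mathcal R^i_\op$ and $\mathcal R^i_\cl$. By the left-barrier property, their symmetric difference is supported on the diagonal $\{x=m\}$ of new maxima, so any ``missing'' mass can only be stopped at running maxima. I would collect these extra stoppings into the auxiliary times $\tau_i^*$: by construction $\tau_i^*<\infty$ forces $B_{\tau_i^*}=\bar B_{\tau_i^*}$, and the monotonicity $\tau_i^*\leq \tau_{i+1}^*$ follows from $\proj_{\S{i-1}}(\Gamma_i)\subset \Gamma_{i-1}$. Making this decomposition rigorous in the multi-marginal setting — by running an induction that tracks the absolutely continuous part of $\law(\bar B_{\tau^{HP}_i},B_{\tau^{HP}_i})\llcorner \{y<x\}$ on the horizontal axis and invokes Lemma~\ref{BarrierEquality} on the free-vs-trapped decomposition from the AY proof — is the main technical obstacle. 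Once this is in hand the remaining steps are a direct translation of the AY blueprint.
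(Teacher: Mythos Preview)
Your overall strategy matches the paper's, but the handling of $\tau^*_i$ has a genuine gap. You define the barriers without restricting to $x<m$ and then hope to read off $\tau^*_i$ from the diagonal discrepancy between $\mathcal R^i_\cl$ and $\mathcal R^i_\op$. This does not work: with your unrestricted $\mathcal R^i_\cl$, the point $(m,m)$ lies in $\mathcal R^i_\cl$ whenever \emph{any} stopped path in $\Gamma_i$ ends at level $m$ (the condition $\bar f_{s_i}\ge m=f(s_i)$ is automatic), so $\tau^i_\cl$ will typically fire the very first time after $\tau^{i-1}$ that $B$ sets a new maximum. The sandwich $\tau^i_\cl\le\tau^{HP}_i\le\tau^i_\op$ is therefore far too loose to pin down $\tau^{HP}_i$, and the ``missing mass on the diagonal'' heuristic cannot recover the correct $\tau^*_i$. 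Incidentally, your claim that the symmetric difference $\mathcal R^i_\cl\setminus\mathcal R^i_\op$ is supported on $\{x=m\}$ is also false: off the diagonal it sits on the graph of the boundary function, exactly as in the AY case.

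The paper sidesteps all of this by defining $\tau^*_i$ \emph{directly from the optimiser}: set $\tau^*_i:=\tau^{HP}_i$ when $B_{\tau^{HP}_i}=\bar B_{\tau^{HP}_i}$ and $\tau^*_i:=\infty$ otherwise, and then build the barriers only on the strict half-plane $\{x<m\}$, i.e.
\[
\mathcal R^i_{\op}:=\{(m,x):\exists (f,s_1,\ldots,s_i)\in\Gamma_i,\ f(s_i)=x,\ \bar f_{s_i}>m,\ x<m\},
\]
and similarly for $\mathcal R^i_\cl$. With that restriction the sandwich $\tau^i_\cl\wedge\tau^*_i\le\tau^{HP}_i\le\tau^i_\op\wedge\tau^*_i$ and the equality $\tau^i_\cl\wedge\tau^*_i=\tau^i_\op\wedge\tau^*_i$ go through exactly as in the Az\'ema--Yor argument, because the barriers never see the diagonal at all. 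Two minor slips in your write-up: the ``strict unless'' condition should have the thresholds for $f$ and $g$ swapped relative to the AY case (here $\bar f_{s_i}<\bar g_{t_i}$, so the equality cases are $\bar W_{\sigma_j}\le\bar f_{s_i}-f(s_i)$ or $\bar W_{\tau_j}\ge\bar g_{t_i}-g(t_i)$), and the ordering $\tau^*_1\le\cdots\le\tau^*_n$ does not follow from $\proj_{\S{i-1}}(\Gamma_i)\subset\Gamma_{i-1}$.
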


\begin{proof}
  Fix a bounded and strictly increasing continuous function
  $\phi:\R_+\to\R_+$ and consider the  continuous
  functions $\gamma(f,s_1,\ldots,s_n)=\sum_{i=1}^n \bar f_{s_i}$,
  $\gamma_2(f,s_1,\ldots,s_n)=\sum_{i=1}^n \phi(\bar f_{s_i})
  f(s_i)^2$ defined on $\S{n}$.
  Pick, by Theorem \ref{thm:exists}, a minimizer $\tau^{HP}$ of \eqref{eq:secopt} and, by Theorem~\ref{thm:monprin2},  a
  $\gamma_2|\gamma$-monotone family of sets $(\Gamma_i)_{i=1}^n$ supporting
  $\tau^{HP}=(\tau^{HP}_i)_{i=1}^n$ such that for all $i\leq n$
  $$\SG_{i,2}\cap(\Gamma_i^{<}\times\Gamma_i)=\emptyset.$$
  By an essentially identical argument to that given in
  Theorem~\ref{thm:AzemaYor}, we have
  \begin{align}\label{eq:SGP}
    \SG_{i,2}\supset \left\{
    ((f,s_1,\ldots,s_i),(g,t_1,\ldots,t_i))\in \S{i}\times
    \S{i}: f(s_i)=g(t_i), \bar f_{s_i} < \bar g_{t_i} \right\}.
  \end{align}

  Note that, given $\tau^{HP}_i$, we can define stopping times
  $\tau^*_i:= \tau^{HP}_i$ if $B_{\tau^{HP}_i} = \bar{B}_{\tau_i^{HP}}$ and
  to be infinite otherwise. 

  For each $i\leq n$ we define 
  $$\mathcal R^i_{\op}:=\left\{(m,x):\exists
    (f,s_1,\ldots,s_i)\in\Gamma_i, f(s_i)=x, \bar f_{s_i} >m, x<m \right\}$$
  and 
   $$\mathcal R^i_{\cl}:=\left\{(m,x):\exists
    (f,s_1,\ldots,s_i)\in\Gamma_i, f(s_i)=x, \bar f_{s_i} \ge m, x<m \right\}$$
  with respective hitting times ($\tau^0=0$)
  $$\tau^i_\op:=\inf\{t\geq \tau^{i-1}_\op: (\bar B_t, B_t)\in\mathcal R^i_\op \}$$
  and 
  $$\tau^i_\cl:=\inf\{t\geq \tau^{i-1}_\cl: (\bar B_t, B_t)\in\mathcal
  R^i_\cl \}.$$
  It can be shown inductively on $i$ that firstly
  $\tau^i_\cl \wedge \tau^*_i\leq \tau_i^{HP} \leq \tau^i_\op\wedge
  \tau_i^*$
  a.s., and secondly $\tau^i_\cl\wedge \tau^*_i=\tau^i_\op\wedge \tau^*_i$ a.s., proving the
  theorem. The proofs of these results are now essentially identical to
  the proof of Theorem~\ref{thm:AzemaYor}.
\end{proof}

Of course, as before, a more general version of the statement (without the summation) can be proved, at the expense of a more complicated argument.

\begin{remark}
  The result above says nothing about the uniqueness of the
  solution. However the following argument (also used in
  \cite{BeCoHu14}) shows that any optimal solution (to both the
  primary and secondary optimisation problem in the proof of
  Theorem~\ref{thm:Perkins}) will have the same barrier form:
  specifically, suppose that $(\tau^i)$ and $(\sigma^i)$ are both
  optimal. Define a new stopping rule which, at time 0, chooses either the
  stopping rule $(\tau^i)$, or the stopping rule $(\sigma^i)$, each
  with probability $1/2$. This stopping rule is also optimal (for both
  the primary and secondary rules), and the arguments above may be
  re-run to deduce the corresponding form of the optimal
  solution.

  In fact, a more involved argument would appear to give uniqueness of
  the resulting barrier among the class of all such solutions; the
  idea is to use a Loynes-style argument as before, but applied both
  to the barrier and the \emph{rate} of stopping at the maximum. The
  difficulty here is to argue that any stopping times of the form
  given above are essentially equivalent to another stopping time
  which simply stops at the maximum according to some rate which will
  be dependent only on the choice of the lower barrier (that is, in
  the language above,
  $\P(H_x^i < \tau^*_i = \tau^{HP}_i \le H_{x+\eps}^i)$ is
  independent of the choice of $\tau^{HP}_i$ for any $x$ and $\eps>0$, where
  $H_x^i:= \inf \{t \ge \tau^{HP}_{i-1}: B_t \ge x$). By identifying
  each of the possible optimisers with a canonical form of the
  optimiser, and using a Loynes-style argument which combines two
  stopping rules of the form above by taking the maximum of the
  left-barriers, and the fastest stopping rate of the rules, one can
  deduce that there is a unique sequence of barriers and stopping rate
  giving rise to an embedding of this form. We leave the details to
  the interested reader.
\end{remark}

\begin{remark}
  We conclude by considering informally the `Perkins'-type
  construction implied by our methods. Recall that in the single
  marginal case, where $B_0 = 0$, the Perkins embedding simultaneously
  both maximises the law of the minimum, and minimises the law of the
  maximum. A slight variant of the methods above would suggest that
  one could adapt the arguments above to consider the optimiser which
  has the same primary objective as above, and also then aims to
  minimise the law of the minimum. In this case the arguments may be
  run to give stopping regions (for each marginal) which are barriers in the sense that
  it is the first hitting time of a left-barrier $\mathcal R$ which is
  left-closed in the sense that if (for a fixed $x$) a path with
  $\bar{f}_s = m, \ul{f}_s = j$ is stopped, then so too are all paths
  with $\bar{g}_s = m',  \ul{g}_s = j'$, where $(m',-j') \prec (m,-j)$
  and $\prec$ denotes the \emph{lexicographical} ordering. With this
  definition, the general outline argument given above can proceed as
  usual, however we do not do this here since the final stage of the
  argument --- showing that the closed and open hitting times of such a
  region are equal --- would appear to be much more subtle than
  previous examples, and so we leave this as an open problem for
  future work.

  However, more notable is that in the multiple marginal case (and
  indeed, already to some extent in the case of a single marginal with
  a general starting law), the Perkins optimality property is no
  longer strictly preserved. To see why this might be the case (see
  also \cite[Remark~2.3]{HoPe02}) we note that, in the case of a
  single marginal, with trivial starting law, the embedding
  constructed via the double minimisation problems always stops at a
  time when the process sets a new minimum or a new maximum. At any
  given possible stopping point, the decision to stop should depend
  both on the current minimum, and the current maximum; however when
  the process is at a current maximum, both the current position and
  the current maximum are the same. In consequence, the
  decision to stop at e.g.\  a new maximum will only depend on the value of
  the minimum, and the optimisation problem relating to maximising a
  function of the maximum will be unaffected by the choice. In
  particular, it is never important which optimisation is the primary
  optimisation problem, and which is the secondary optimisation
  problem: in terms of the barrier-criteria established above, this
  can be seen by observing that in lexicographic ordering,
  $(m',-j') \prec (m,-j)$ is equivalent to $(-j',m') \prec (-j,m)$ if
  either $m=m'$ or $j=j'$.

  On the other hand, with multiple marginals, we may have to consider
  possible stopping at times which do not correspond to setting a new
  maximum or minimum. Consider for example the case with
  $\mu_0 = \delta_0, \mu_1 = (\delta_1 + \delta_{-1})/2, \mu_2 =
  2(\delta_2 + \delta_{-2})/5+\delta_0/5$.
  In particular, the first stopping time, $\tau_1$ must be the first
  hitting time of $\{-1,1\}$, and if the process stops at $0$ at the
  second stopping time, then to be optimal, it must stop there the
  first time it hits $0$ after $\tau_1$. If we consider the
  probability that we return to 0 after $\tau_1$, before hitting
  $\{-2,2\}$, then this is larger than $\frac{1}{5}$, and we need to
  choose a rule to determine which of the paths returning to 0 we
  should stop. It is clear that, if the primary optimisation is to
  minimise the law of the maximum, then this decision would only
  depend on the running maximum, while it will depend only on the
  running minimum if the primary and secondary objectives are
  switched. In particular, the two problems give rise to different
  optimal solutions. The difference here arises from the fact that we
  are not able to assume that all paths have either the same maximum,
  or the same minimum. As a consequence, we do not, in general, expect
  to recover a general version of the Perkins embedding, in the sense
  that there exists a multi-marginal embedding which minimises the law
  of the maximum, and maximises the law of the minimum
  simultaneously. 
\end{remark}

\subsubsection{Further ``classical'' embeddings and other remarks}
By combining the ideas and techniques from the previous sections and the techniques from \cite[Section 6.2]{BeCoHu14} we can establish the existence of $n$-marginal versions of the Jacka and Vallois embeddings and their siblings (replacing the local time with a suitably regular additive functional) as constructed in \cite[Remark 7.13]{BeCoHu14}.  We leave the details to the interested reader. 

We also remark that it is possible to get more detailed descriptions of the structure of the different barriers. At this point we only note that all the embeddings presented above have the nice property that their $n$-marginal solution restricted to the first $n-1$ marginals is in fact the $n-1$ marginal solution. This is a direct consequence of the extension of the Loynes argument to $n$-marginals as shown in the proof of Theorem \ref{thm:Root1}. For a more detailed description of the barriers for the $n$-marginal Root embedding we refer to \cite{CoObTo15}.

We also observe  that, as in \cite[Section 6.3]{BeCoHu14}, it is possible to deduce multi-marginal embeddings of some of the embeddings presented in the previous sections, e.g.\ Root and Rost, in higher dimensions. We leave the details to the interested reader.

\subsubsection{A $n$-marginal version of the monotone martingale coupling.}

We next discuss the embedding giving rise to a multi-marginal version of the monotone martingale transport plan. Note that we need an extra assumption on the starting law $\mu_0$, but on $\mu_0$ only.

\begin{theorem}[$n$-marginal martingale monotone transport plan]\label{thm:MMMM1}
 Assume that $\mu_0$ is continuous (in the sense that $ \mu_0(a)=0$ for all $a\in \R$). Let $c:\R\times\R\to\R$ be three times continuously differentiable with $c_{xyy}<0$. Put $\gamma_i:\S{n}\to\R, (f,s_1,\ldots,s_n)\mapsto c(f(0),f(s_i))$ and assume that \eqref{eq:Pgamma} is well posed. Then there exist $n$ barriers $(\mathcal R^i)_{i=1}^n$ such that defining
\begin{align*}
& \tau_1=\inf\{t\geq 0: (B_t-B_0, B_t)\in \mathcal R^1\}
\end{align*}
and for $1<i\leq n$
\begin{align*}
 \tau_i=\inf\{t\geq \tau_{i-1}: (B_t-B_0, B_t)\in \mathcal R^i\}
\end{align*}
the multi-stopping time $(\tau_1,\ldots,\tau_n)$ minimises
$$\E[c(B_0,B_{\tau_i})]$$ 
simultaneously for all $1\leq i\leq n$ among all increasing families of stopping times $(\tilde \tau_1,\ldots,\tilde\tau_n)$ such that $B_{\tilde \tau_j}\sim\mu_j$ for all $1\leq j\leq n$. This solution is unique in the sense that for any solution $\tilde \tau_1, \ldots, \tilde \tau_n$ of such a barrier-type we have $\tau_i=\tilde \tau_i$.
\end{theorem}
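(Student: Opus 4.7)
The plan is to mirror the proof of Theorem~\ref{thm:Root1}. Fix a permutation $\kappa$ of $\{1,\ldots,n\}$ and set $\tilde\gamma_l:=\gamma_{\kappa(l)}$. By the $n$-ary versions of Theorem~\ref{thm:exists} and Theorem~\ref{thm:monprin2}, one obtains an optimiser $(\tau_1,\ldots,\tau_n)$ of the lexicographic problem $P_{\tilde\gamma_n|\cdots|\tilde\gamma_1}$ together with a $\tilde\gamma_n|\cdots|\tilde\gamma_1$-monotone family $(\Gamma_i)_{i=1}^n$ supporting it.

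The crucial step is to establish the stop-go inclusion
\[
  \SG_{i,n}\supseteq\big\{\big((f,s_1,\ldots,s_i),(g,t_1,\ldots,t_i)\big):\ f(s_i)=g(t_i),\ f(0)<g(0)\big\}.
\]
To verify it, I would fix such a pair with $y:=f(s_i)=g(t_i)$, $a:=f(0)$, $b:=g(0)$ and admissible families $(\sigma_j),(\tau_j)$ together with their modifications $(\tilde\sigma_j),(\tilde\tau_j)$. Following the indexing used for Root, set $j_1:=\inf\{m\geq 1:\kappa(m)\geq i\}$, inductively $j_a:=\inf\{m>j_{a-1}:\kappa(m)\geq i\}$, and $l:=\argmin\{a:\P[\sigma_{j_a}\neq\tilde\sigma_{j_a}]>0\}$. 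As in that proof, on $\{\sigma_{j_l}\neq\tilde\sigma_{j_l}\}$ one has $\sigma_{j_l}>\tau_{j_l}$ strictly. The contribution of the $\kappa(j_l)$-th objective to the difference of the two sides of \eqref{eq:SG} then reduces, on the swap event, to
\[
  \E\big[\big(h(y+W_{\sigma_{j_l}})-h(y+W_{\tau_{j_l}})\big)\mathbf{1}_{\{\sigma_{j_l}>\tau_{j_l}\}}\big],\qquad h(y'):=c(a,y')-c(b,y').
\]
The assumption $c_{xyy}<0$ combined with $a<b$ gives $h''(y')=-\int_a^b c_{xyy}(x,y')\,dx>0$, so $h$ is strictly convex. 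Since $W$ is a martingale, conditioning on $\mathcal F_{\tau_{j_l}}$ and applying Jensen's inequality yields strict positivity, which is the required strict inequality in the $\kappa(j_l)$-ary version of \eqref{eq:SG2} and establishes the claim.

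Next, I would define for $1\leq i\leq n$ the (right-)barriers in the increment/value plane
\begin{align*}
  \mathcal R^i_\cl&:=\{(s,x)\in\R_+\times\R:\exists(g,t_1,\ldots,t_i)\in\Gamma_i,\ g(t_i)=x,\ g(t_i)-g(0)\leq s\},\\
  \mathcal R^i_\op&:=\{(s,x)\in\R_+\times\R:\exists(g,t_1,\ldots,t_i)\in\Gamma_i,\ g(t_i)=x,\ g(t_i)-g(0)<s\},
\end{align*}
and the corresponding hitting times $\tau^i_\cl,\tau^i_\op$ of $(B_t-B_0,B_t)$, each taken after $\tau^{i-1}_\cl$. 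Proceeding by induction on $i$ and combining the monotonicity of $(\Gamma_i)$, the stop-go inclusion just derived, and the strong Markov property exactly as in the proof of Theorem~\ref{thm:Root1}, one obtains $\tau^i_\cl\leq\tau_i\leq\tau^i_\op$ and then $\tau^i_\cl=\tau^i_\op$ a.s. Independence of the permutation $\kappa$ and uniqueness among barrier-type solutions are then obtained by the Loynes-style argument used in that proof, again inductively on $i$.

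The main difficulty is the equality $\tau^i_\cl=\tau^i_\op$ in the $(B_t-B_0,B_t)$ geometry: unlike the Root case (where the barrier lives in $(t,B_t)$), the first coordinate $B_t-B_0$ is not monotone in $t$, and for fixed $y$ the relevant threshold is effectively a constraint on the starting point $B_0$. This is precisely where the continuity assumption on $\mu_0$ enters: it ensures that $\P$-a.s.\ any two distinct paths seen by the monotonicity principle admit the strict ordering $f(0)<g(0)$ (or its reverse) required for the stop-go condition to be effective, so that the Loynes argument goes through and the open and closed hitting-time coincidence can be reduced to the strong-Markov one-dimensional argument used for Root.
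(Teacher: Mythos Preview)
Your overall plan and the stop-go inclusion are fine; in fact your argument via the strictly convex function $h(y')=c(a,y')-c(b,y')$ and Jensen is exactly equivalent to what the paper does (the paper phrases it as ``$c_x$ is concave and $\beta\preceq_c\alpha$'', which is the same computation). The sandwich $\tau^i_\cl\le\tau_i\le\tau^i_\op$ also goes through as you indicate.

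The genuine gap is your last paragraph. You correctly observe that, unlike Root, the first coordinate $B_t-B_0$ is not monotone, so the strong-Markov argument used for Root's barriers does not apply. But your proposed fix --- that continuity of $\mu_0$ ``ensures that any two distinct paths admit the strict ordering $f(0)<g(0)$'' so that the Root-type argument can be recovered --- does not work and is not how $\mu_0$ enters. The point is geometric, not about stop-go pairs: hitting of $(B_t-B_0,B_t)$ into a right-barrier in the $(d,x)$-plane is, after the shear $(d,x)\mapsto(d-x,x)$, the same as hitting of the process $(-B_0,B_t)$ into a transformed barrier. This process moves only \emph{vertically} (its first coordinate is constant), so one is exactly in the setting of Lemma~\ref{BarrierEquality}, which gives $\tau^i_\cl=\tau^i_\op$ provided the projection of the relevant starting measure onto the horizontal axis is atomless. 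That projection is (a push-forward of) $\mu_0$, and this is precisely where the continuity assumption on $\mu_0$ is used. Without this shearing-plus-Lemma~\ref{BarrierEquality} step (or an equivalent argument), the equality of the open and closed hitting times is not established, and your Loynes/strong-Markov reduction does not substitute for it.
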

\begin{remark}
In the final stage of writing this article we learned of the work of Nutz, Stebegg, and Tan \cite{NuStTa17} on multi-period martingale optimal transport which (among various further results) provides an $n$-marginal version of the monotone martingale transport plan. Their methods are rather different from the ones employed in this article and in particular not related to the Skorokhod problem. 
\end{remark}
\begin{proof}[Proof of Theorem \ref{thm:MMMM1}.]
The overall strategy of the proof, and in particular the first steps follow exactly the arguments encountered above.
Fix a permutation $\kappa$ of $\{1,\ldots,n\}$. We consider the functions $\tilde\gamma_1=\gamma_{\kappa(1)},\ldots,\tilde\gamma_n=\gamma_{\kappa(n)}$ on $\S{n}$  and the corresponding family of $n$-ary minimisation problems. Pick, by the n-ary version of Theorem \ref{thm:exists}, an optimizer $(\tau_1,\ldots,\tau_n)$ and, by the $n$-ary version of Theorem \ref{thm:monprin2},  a $\tilde\gamma_n|\ldots|\tilde\gamma_1$-monotone family of sets  $(\Gamma_1,\ldots,\Gamma_n)$ supporting $(\tau_1,\ldots,\tau_n),$ i.e.\ for every $i\leq n$ we have  $\P$-a.s.
$$((B_s)_{s\leq\tau_i},\tau_1,\ldots,\tau_i)\in\Gamma_i,$$
 and 
$$(\Gamma_i^{<}\times \Gamma_i)\cap\SG_{i,n}=\emptyset.$$
We claim that   for all $1\leq i\leq n$ we have
$$\SG_{i,n}\supset\{(f,s_1,\ldots,s_i),(g,t_1,\ldots,t_i): f(s_i)=g(t_i), g(0)>f(0)\}.$$
To this end, we have to consider $(f,s_1,\ldots,s_i),(g,t_1,\ldots,t_i)\in \S{i}$ satisfying $f(s_i)=g(t_i), f(s_i)-f(0)>g(t_i)-g(0)$ and consider two families of stopping times $(\sigma_j)_{j=i}^n$ and $(\tau_j)_{j=i}^n$ together with their modifications $(\tilde\sigma_j)_{j=i}^n$ and $(\tilde\tau_j)_{j=i}^n$ as in Section \ref{sec:probint}. However, since the modification of stopping times consists only of repeated swapping of the two stopping times what is effectively sufficient to prove is the following: 

For $f(s)-f(0)>g(t)-g(0)$ and any  stopping times $\rho, \sigma, \tau$, where $\rho\leq \sigma$, we have for
$$\tilde \sigma := \sigma \I_{\rho \leq \tau} +  \tau \I_{\rho >  \tau}, \quad \tilde \tau := \tau \I_{\rho \leq \tau} +  \sigma \I_{\rho >  \tau}$$
the inequality 
\begin{align}\label{therelevantone}\begin{split}
&\E[ c(f(0), f(s) + B_\sigma)] + \E [c(g(0),g(t)+  B_\tau) ]\\
>\ 
& \E [c(f(0),f(s) + B_{\tilde \sigma}) ]+ \E[ c(g(0),g(t)+  B_{\tilde\tau})],\end{split}
\end{align}
and that this inequality is strict, provided that the set $\rho>\tau$ has positive probability. 

To establish this inequality, of course only the parts were $\rho> \tau$ matters. Otherwise put, the inequality remains equally  valid if we replace all of $\sigma, \tau, \tilde \sigma, \tilde \tau$ by $\tau\vee \sigma$ on the set $\rho\leq \tau$, in which case we  have $\tilde \sigma= \tau$, $\tilde \tau=\sigma$, $\sigma\geq\tau$. Hence to prove \eqref{therelevantone} it is sufficient to show for $\alpha:=\law (B_\sigma), \beta:=\law (B_\tau)$ and $a:= f(s)=g(t)$ that 
$$\int c(f(0),a+x)\, d\alpha(x) + \int c(g(0),a+x)\, d\beta(x)
> \int c(f(0),a+x)\, d\beta(x) + \int c(g(0),a+x)\, d\alpha(x).$$
To obtain this, we claim that 
$$t\mapsto \int c(t,a+x)\, d\alpha(x) - \int c(t,a+x)\, d\beta(x)$$
is decreasing in $t$: This holds true since $c_x$ is concave and $\beta$ precedes  $\alpha$  in the convex order (strictly if $\P(\rho>\tau)>0$).

\smallskip

Having established the claim, we define for each $1\leq i\leq n$
$$\mathcal R_\cl^i:=\{(d,x)\in\R_+\times\R: \exists (g,t_1,\ldots,t_i)\in \Gamma_i, g(t_i)=x, d\geq g(t_i)-g(0)\}$$
and
$$\mathcal R_\op^i:=\{(d,x)\in\R_+\times\R: \exists (g,t_1,\ldots,t_i)\in \Gamma_i, g(t_i)=x, d> g(t_i)-g(0)\}.$$
Following the argument used above, we define 
$\tau^1_\cl$ and $\tau^1_\op$ to be the first times the process $(B_t-B_0, B_t)_{t\geq 0}$ hits $\mathcal R^1_\cl$ and $\mathcal R^1_\op$ respectively to see that actually $\tau^1_\cl\leq\tau_1\leq\tau_\op^1$. 

It remains to show that  $\tau^1_\cl=\tau_\op^1$ (This has already been shown in \cite[Prop. 3.1]{HuSt16}; we present the argument for completeness). To this end, note that the hitting time of $(B_t-B_0, B_t)_{t\geq 0}$ into a barrier can equally well be interpreted as the hitting time of $(-B_0, B_t)_{t\geq 0}$ into a transformed (i.e.\ sheared through the transformation $(d,x)\mapsto (d-x,x)$ ) barrier. 
The purpose of this alteration is that the process $(-B_0,B_t)_{t\geq 0}$ moves only vertically and we can now apply Lemma \ref{BarrierEquality} to establish that indeed  $\tau^1_\cl=\tau_\op^1$. Observe that at this stage the continuity assumption on $\mu_0$ is crucial. 

We then proceed by induction. 

As above, uniqueness and the irrelevance of the permutation follow from Loynes' argument. 
\end{proof}

A very natural conjecture is then that Theorem~\ref{thm:MMMM1} would give rise to a solution to the \emph{peacock} problem. The set of martingales $(S_t)_{t\in [0,T]}$ (more precisely the set of corresponding martingale measures) carries a natural topology and
given $D\subseteq [0,T]$ with $T\in D$ the set of martingales  with prescribed marginals $(\mu_t)_{t\in D}$ is compact (cf.\
\cite{BeHuSt15}).
By taking limits of the solutions provided above along appropriate finite discretisations $D\subseteq [0,T]$, one obtains a sequence of optimisers to the discrete problem whose limit $(S_t)_{t\in [0,T]}$ satisfies $S_t\sim \mu_t, t\in [0,T]$ and  minimizes  $\E [(S_t-S_0)^3]$ simultaneously for all $t\in [0,T]$ among all such martingales. However, since this is not the scope of the present article we leave details for future work.

We note that this also provides a continuous time extension of the martingale monotone coupling rather different from the constructions given by Henry-Labord\`ere, Tan, and Touzi \cite{HeTo13} and Juillet \cite{Ju14a}.

\section{Stopping Times and Multi-Stopping Times}\label{sec:Si}

For a Polish space $\sX$ equipped with a probability measure $m$ we define a new probability space $(\X,\cG^0,(\cG^0_t)_{t\geq 0},\P)$ with $\X:=\sX\times \CRo, \cG^0:= \mathcal B(\sX)\otimes \cF^0, \cG^0_t:=\mathcal B(\sX)\otimes \F^0_t$, $\P:= m \otimes \W$, where $\mathcal B(\sX)$ denotes the Borel $\sigma$-algebra on $\sX$, $\W$ denotes the Wiener measure, and $(\F^0_t)_{t\geq 0}$  the natural filtration. We denote the usual augmentation of $\cG^0$ by $\cG^a$. Moreover, for $*\in\{0,a\}$ we set $\cG^*_{0-}:=\mathcal B(\sX)\otimes \F^*_0.$ If we want to stress the dependence on $(\sX,m)$ we write  $\cG^a(\sX,m), \cG^a_t(\sX,m),\ldots$.

The natural coordinate process on $\X$ will be denoted by $Y$, i.e.\ for $t\geq 0$ we set
$$Y_t(x,\omega)=(x,\omega_t).$$
Note that under $\P$, in the case where $\X = \R$, the process $Y$ can be interpreted as a Brownian motion
with starting law $m$. In particular, $t\mapsto Y_t(x,\omega)$ is continuous and $\cG^0_t=\sigma(Y_s, s\leq t)$.

We recall
$$ S:= \left\{ (f,s)~:~f\in C[0,s], f(0)=0\right\}, \qquad S_\sX:=(\sX, S)$$
and introduce the maps
\begin{align}\label{eq:rx}
& r: \CRo\times \R_+\to S, \quad (\omega,t)\mapsto (\omega_{\llcorner [0,t]},t),\\
 &r_\sX: \X\times \R_+\to S_\sX,\quad (x,\omega,t)\mapsto (x,r(\omega,t)).
\end{align}
We equip $C_0(\R_+)$ with the topology of uniform convergence on compacts and $S_\sX$ with the final topology inherited from $\X\times \R_+$ turning it into a Polish space. This structure is very convenient due to the following proposition which is a particular case of \cite[Theorem IV. 97]{DeMeA}.

\begin{proposition}\label{S2F} Optional sets / functions on $\X\times\R_+$ correspond to Borel measurable sets / functions on $S_\sX$. More precisely we have: 
 \begin{enumerate}
  \item A set $D\subseteq \X\times\R_+$ is $\cG^0$-optional iff  there is a Borel set $A\subseteq S_\sX$  with $D=r_\sX^{-1}(A)$. 
\item A process $Z=(Z_t)_{t\in\R_+}$ is $\cG^0$-optional iff there is a Borel measurable $H:S_\sX\to\R$ such that $Z=H\circ r.$ 
\end{enumerate}
A $\cG^0$-optional set $A\subseteq \X\times\R_+ $ is closed in $\X\times\R_+$ iff the corresponding set $r_\sX(A)$ is closed in $S_\sX$. 
\end{proposition}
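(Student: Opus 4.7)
The plan is to reduce Proposition~\ref{S2F} to the cited Dellacherie--Meyer result, which asserts precisely this correspondence in the canonical path setting, with the additional factor $\sX$ handled as a measurable parameter. The only new element beyond that theorem is the topological assertion in the final clause, which will follow from the fact that $S_\sX$ carries the final (quotient) topology under $r_\sX$.

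First, I would record the elementary but fundamental observation that $r_\sX: \X \times \R_+ \to S_\sX$ is continuous and surjective, and is in fact a quotient map. Continuity is forced by the defining final topology: for any convergent sequence $(x_n, \omega^n, t_n) \to (x, \omega, t)$ in $\X \times \R_+$, the uniform-on-compacts convergence of $\omega^n$ together with $t_n \to t$ yields convergence of the stopped paths $(\omega^n)_{\upharpoonright [0,t_n]}$ to $\omega_{\upharpoonright[0,t]}$. Surjectivity is clear (any $(x,f,s) \in S_\sX$ admits a continuous extension to $\R_+$), and by the definition of the final topology this makes $r_\sX$ a quotient map.

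For the reverse direction of (1), a Borel $A \subseteq S_\sX$ pulls back to a Borel set $r_\sX^{-1}(A)$ which is $\cG^0$-optional, because for each fixed $x \in \sX$ the process $t \mapsto \mathbf{1}_A(r_\sX(x,\omega,t))$ is right-continuous and $\F^0$-adapted in $\omega$ (checked first on generators of $\mathcal{B}(S_\sX)$, then extended by a Dynkin argument). The forward direction is the substantive content of Dellacherie--Meyer IV.97: the $\cG^0$-optional $\sigma$-algebra is generated by right-continuous $\cG^0$-adapted processes $Z$, and any such $Z_t(x,\omega)$ depends only on $(x, \omega_{\upharpoonright[0,t]}) = r_\sX(x,\omega,t)$, so $Z$ factors as $H \circ r_\sX$ for some Borel $H : S_\sX \to \R$; specialising to indicators yields (1). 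Part (2) then follows from (1) by a routine monotone class argument: the identity $Z = H \circ r_\sX$ is true for indicators of optional sets, hence for simple optional functions, and extends by monotone convergence to all optional functions.

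For (3), continuity of $r_\sX$ gives directly that $A$ closed in $S_\sX$ implies $r_\sX^{-1}(A)$ closed in $\X \times \R_+$. Conversely, suppose $D \subseteq \X\times\R_+$ is closed and $\cG^0$-optional; by (1) we may write $D = r_\sX^{-1}(A)$ for some Borel $A$, and by surjectivity of $r_\sX$ we have $A = r_\sX(D)$. Since $S_\sX$ carries the quotient topology induced by $r_\sX$, a subset $A \subseteq S_\sX$ is closed iff $r_\sX^{-1}(A)$ is closed in $\X \times \R_+$; applying this to $D = r_\sX^{-1}(A)$ closes the circle. The main obstacle I anticipate is not substantive mathematics but a bookkeeping check that the setup of Dellacherie--Meyer IV.97 applies verbatim to the parametrised space $(\X, \cG^0, (\cG^0_t))$: the $\sX$-factor contributes only a time-independent measurable parameter and commutes with every step of the standard proof, as can be verified by testing on generators of the optional $\sigma$-algebra of the form $B \times F \times [s,t)$ with $B \in \mathcal{B}(\sX)$ and $F \in \F^0_s$.
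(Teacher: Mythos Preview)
Your proposal is correct and aligns with the paper's treatment: the paper does not give a proof at all but simply records the proposition as ``a particular case of \cite[Theorem~IV.97]{DeMeA}'', so your reduction to Dellacherie--Meyer is exactly the intended route. Your added justification of the closedness assertion via the quotient-map property of $r_\sX$ (which is immediate from the final topology on $S_\sX$) is a clean way to handle the one clause the paper leaves implicit.
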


\begin{definition}\label{def:Scont}
 A $\cG^0$-optional process $Z=H\circ r_\sX$ is called $S_\sX$- continuous (resp.\ l./u.s.c.) iff $H:S_\sX\to \R$ is continuous (resp.\ l./u.s.c.).
\end{definition}

\begin{remark}\label{rem:pred ST}
 Since the process $t\mapsto Y_t(x,\omega)$ is continuous the predictable and optional $\sigma$- algebras coincide (\cite[Theorems~IV.67 (c) and IV.97]{DeMeA}). Hence, every $\cG^0$-stopping  time $\tau$ is predictable and, therefore, foretellable on the set $\{\tau > 0\}.$
\end{remark}

\begin{definition}\label{EAverage}
Let $Z:\X\to \R$ be a measurable function which is bounded or positive. Then we define $\E[Z|\cG_t^0]$ to be the unique $\cG_t^0$-measurable function satisfying 
$$\E[Z|\cG_t^0](x,\omega):=Z^M_t(x,\omega):=  \int Z(x,\omega_{\llcorner[0,t]}\oplus \omega')\, d\W(\omega').$$
\end{definition}

\begin{proposition}\label{prop:very cont}
 Let $Z\in C_b(\X)$. Then $Z^M_t$ defines an $S_\sX$- continuous martingale (see Definition \ref{def:Scont}), $Z^M_\infty=\lim_{t\to\infty}Z^M_t$ exists and equals $Z$. 
\end{proposition}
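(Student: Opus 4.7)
The plan is to verify the three assertions in turn: the martingale property, the $S_\sX$-continuity, and the identification of the limit at infinity.

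For the martingale property, fix $s \le t$ and $A \in \cG^0_s$. Splitting $\omega = \omega_{\upharpoonright[0,s]} \oplus (\theta_s\omega)$ under Wiener measure and using that $\theta_s\omega$ is a Brownian motion independent of $\cG^0_s$, the tower property applied to the defining formula for $Z^M_t$ gives $\E[Z^M_t \,|\, \cG^0_s] = Z^M_s$ $\P$-a.s. Alternatively, $Z^M_t$ is a regular version of the conditional expectation $\E[Z \,|\, \cG^0_t]$ because for every bounded $\cG^0_t$-measurable $H$, $\E[H \cdot Z^M_t] = \E[H Z]$ by Fubini and independence of Wiener increments; the martingale property is then automatic from the tower property.

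For $S_\sX$-continuity, by Proposition~\ref{S2F} it suffices to exhibit a continuous function $H : S_\sX \to \R$ with $Z^M_t = H \circ r_\sX$. Define
\[
H(x,f,t) := \int Z(x, f \oplus \omega')\, d\W(\omega').
\]
Clearly $Z^M_t = H \circ r_\sX$. To see $H$ is continuous, suppose $(x_n,f_n,t_n) \to (x,f,t)$ in $S_\sX$; by the very definition of the final topology on $S_\sX$, this means there are $\omega_n \to \omega$ in $\CRo$ with $r(\omega_n,t_n) = (f_n,t_n)$ and $r(\omega,t) = (f,t)$. A direct estimate shows that for every fixed $\omega' \in \CRo$ the concatenated paths $\omega_n{\upharpoonright[0,t_n]} \oplus \omega'$ converge uniformly on compacts to $\omega{\upharpoonright[0,t]} \oplus \omega'$ (on $[0,t)$ this follows from uniform convergence on compacts of $\omega_n$, on $(t,\infty)$ from $\omega_n(t_n) \to \omega(t)$ and continuity of $\omega'$). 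Continuity and boundedness of $Z$ together with dominated convergence give $H(x_n,f_n,t_n) \to H(x,f,t)$.

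For $Z^M_\infty = Z$, observe that for every fixed $(x,\omega)$ and every compact $[0,T]$, once $t \ge T$ one has $(\omega{\upharpoonright[0,t]} \oplus \omega')_{|[0,T]} = \omega_{|[0,T]}$, so $\omega{\upharpoonright[0,t]} \oplus \omega' \to \omega$ in $\CRo$ as $t\to\infty$, for each $\omega'$. Continuity and boundedness of $Z$ together with dominated convergence yield the pointwise limit $Z^M_t(x,\omega) \to Z(x,\omega)$. Since $Z^M$ is a bounded martingale, Doob's convergence theorem guarantees the a.s.\ limit $Z^M_\infty$ exists; the pointwise computation identifies this limit with $Z$, as claimed.

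The main (mildly) nontrivial step is the $S_\sX$-continuity, specifically verifying that the concatenation map $(x,\omega,t,\omega') \mapsto (x,\omega{\upharpoonright[0,t]}\oplus\omega')$ is jointly continuous into $\X$; the remaining pieces are routine consequences of dominated convergence and martingale convergence.
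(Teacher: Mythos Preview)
Your argument is correct. The paper itself does not give a proof at all: it simply cites \cite[Proposition~3.5]{BeCoHu14} and remarks that only the underlying probability space changes. Your sketch therefore supplies the details that the paper omits, and the structure you give (martingale property via the tower property, continuity via dominated convergence applied to the concatenation map, and identification of the limit via pointwise convergence plus Doob) is exactly what lies behind the cited result.

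One small phrasing point: your sentence ``by the very definition of the final topology on $S_\sX$, this means there are $\omega_n \to \omega$\ldots'' is not the cleanest way to use the final topology. What the final topology actually gives you directly is that $H:S_\sX\to\R$ is continuous if and only if $H\circ r_\sX:\X\times\R_+\to\R$ is continuous; so it suffices to check sequential continuity of $(x,\omega,t)\mapsto \int Z(x,\omega_{\llcorner[0,t]}\oplus\omega')\,d\W(\omega')$ on $\X\times\R_+$, which is precisely what your concatenation argument establishes. No lifting of sequences from $S_\sX$ back to $\X\times\R_+$ is needed.
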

\begin{proof}
 Up to a minor change of the probability space this is \cite[Proposition 3.5]{BeCoHu14}.
\end{proof}

\subsection{Randomised stopping times}\label{sec:rst}

We set
$$ \mathsf{M}:=\{\xi\in\mathcal P^{\leq 1}(\X\times \R_+): \xi(d(x,\omega),ds)=\xi_{x,\omega}(ds)~\P(d(x,\omega)), \xi_{x,\omega} \in\mathcal P^{\leq 1}(\R_+)\}$$
and equip it with the weak topology induced by the continuous and bounded functions on $\X\times\R_+$. Each $\xi\in\mathsf{M}$ can be uniquely characterized by its cumulative distribution function $A^\xi_{x,\omega}(t):=\xi_{x,\omega}([0,t])$.
\begin{definition}\label{def:rst}
A measure $\xi\in\mathsf{M}$ is called randomized stopping time, written $\xi\in\RST$, iff the associated increasing process $A^\xi$ is optional. If we want to stress the Polish probability space $(\sX,\mathcal B(\sX), m)$ in the background, we write $\RST(\sX,m)$.
\end{definition}

We remark that randomized stopping times are a subset of the so called $\mathbf{P}$-measures introduced by Doleans \cite{Dol68} (for motivation and further remarks see \cite[Section 3.2]{BeCoHu14}).

In the sequel we will mostly be interested in a representation of randomized stopping times on an enlarged probability space. We will be interested in $(\X',\cG',(\cG_t')_{t\geq 0},\P')$ where $\X':=\X\times [0,1]$, $\P'(A_1\times A_2)=\P(A)\leb(A_2)$ ($\leb$ denoting Lebesgue measure on $\R$), $\cG'$ is the completion of $\cG^0\otimes\mathcal B([0,1])$, and $(\cG_t')_{t\geq 0}$ the usual augmentation of $(\cG_t^0\otimes \mathcal B([0,1]))_{t\geq 0}.$ 

The following characterization of randomized stopping times is essentially Theorem~3.8 of \cite{BeCoHu14}. The only difference is the presence of the $\sX$ in the starting position, however it is easily checked that this does not affect the proof.

\begin{theorem}\label{thm:equiv RST}
Let $\xi\in \mathsf{M}$. Then the following are equivalent:
\begin{enumerate}
\item There is a Borel function $A:S_\sX\to[0,1]$ such that the process $A\circ r_\sX$ is right-continuous increasing and
\begin{align}\label{AlternativeRep} \xi_{x,\omega}([0,s]):=A\circ r_\sX(x,\omega,s)\end{align}
defines a disintegration of $\xi$ wrt to $\P$. 
\item We have $\xi\in \RST(\sX,m)$, i.e.\ given a  disintegration $(\xi_{x,\omega})_{(x,\omega)\in\X}$ of $\xi$ wrt $\P$, the random variable   $ \tilde A_t(x,\omega)=\xi_{x,\omega}([0,t])$ is $\cG^a_t$-measurable for all $t\in \R_+$. 
\item  For all $f\in C_b(\R_+)$ supported on some  $ [0,t]$, $t\geq 0$ and all $g\in C_b(\X)$ 
\begin{align}\label{CheckMeasurability}\textstyle\int f(s) (g-\E[g|\cG_t^0])(x,\omega) \, \xi(dx,d\omega, ds)=0\end{align}
\item On the probability space $(\X',\cG',(\cG_t')_{t\geq 0},\P')$, the random time
  \begin{equation}
    \label{eq:rhodefn}
     \rho(x,\omega,u) :=\inf \{ t \ge 0 : \xi_{x,\omega}([0,t]) \ge u\}
  \end{equation}
  defines an $\cG'$-stopping time.
\end{enumerate}
\end{theorem}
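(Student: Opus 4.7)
The plan is to run the cycle $(1)\Rightarrow(2)\Rightarrow(3)\Rightarrow(2)\Rightarrow(1)$ and then handle $(2)\Leftrightarrow(4)$ separately; since the starting coordinate $x\in\sX$ is only carried passively, this essentially reproduces the proof of \cite[Theorem~3.8]{BeCoHu14} with routine bookkeeping. The main obstacle will be the step $(3)\Rightarrow(2)$, which turns a weak orthogonality condition into honest adaptedness of the cumulative distribution function $\tilde A_t$.

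For $(1)\Rightarrow(2)$, Proposition~\ref{S2F} says that $A\circ r_\sX$ is $\cG^0$-optional, so $\tilde A_t=A\circ r_\sX(\cdot,t)$ is $\cG_t^0$- and hence $\cG_t^a$-measurable. For $(2)\Rightarrow(3)$, fix $f\in C_b(\R_+)$ supported in $[0,t]$ and $g\in C_b(\X)$, and use Fubini to rewrite
\[
\int f(s)\bigl(g-\E[g|\cG_t^0]\bigr)(x,\omega)\,\xi(dx,d\omega,ds)
=\E\Bigl[\bigl(g-\E[g|\cG_t^0]\bigr)\int_{[0,t]}f(s)\,d\tilde A_s(x,\omega)\Bigr].
\]
The inner integral is $\cG_t^a$-measurable by (2), while $g-\E[g|\cG_t^0]$ is $L^2$-orthogonal to every bounded $\cG_t^a$-measurable random variable (augmentation does not change conditional expectations), so the whole expression vanishes.

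The main step $(3)\Rightarrow(2)$ reverses this computation. For fixed $f$ and $t$, (3) reads $\E\bigl[F\cdot(g-\E[g|\cG_t^0])\bigr]=0$ with $F:=\int_{[0,t]}f\,d\tilde A_s$. Because $C_b(\X)$ is dense in $L^2(\X,\P)$, the linear span of $\{g-\E[g|\cG_t^0]:g\in C_b(\X)\}$ is dense in the orthogonal complement of $L^2(\cG_t^a,\P)$ inside $L^2(\cG^a,\P)$; hence $F$ lies in $L^2(\cG_t^a,\P)$. Specialising $f$ to a right-continuous approximation of $\indic{[0,s]}$ for $s<t$, using right-continuity of the sub-probability CDF $\tilde A$, and then sending $t\downarrow s$ yields $\tilde A_s\in L^0(\cG_{s+}^a)=L^0(\cG_s^a)$ for every $s$, which is (2).

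Finally, for $(2)\Rightarrow(1)$, the right-continuous, increasing, adapted process $\tilde A$ is $\cG^a$-optional, and therefore admits a $\cG^0$-optional $\P$-modification; by Proposition~\ref{S2F} this modification has the form $A\circ r_\sX$ for a Borel map $A\colon S_\sX\to[0,1]$, which gives (1) after adjusting on the null set. For $(2)\Leftrightarrow(4)$, right-continuity of $u\mapsto\xi_{x,\omega}([0,u])$ yields the identity $\{\rho\leq t\}=\{(x,\omega,u):\tilde A_t(x,\omega)\geq u\}$, and this set lies in $\cG_t'$ if and only if $\tilde A_t$ is $\cG_t^a$-measurable; the two conditions are therefore manifestly equivalent.
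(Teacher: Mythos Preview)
Your proof is correct and aligns with the paper's approach: the paper does not give an independent argument but simply records that the result is essentially \cite[Theorem~3.8]{BeCoHu14}, the only change being the passive extra coordinate $x\in\sX$, and you have faithfully reproduced that argument with the required bookkeeping. In particular your handling of $(3)\Rightarrow(2)$ via $L^2$-orthogonality and density of $C_b(\X)$, and of $(2)\Leftrightarrow(4)$ via the identity $\{\rho\le t\}=\{\tilde A_t\ge u\}$, matches the strategy in \cite{BeCoHu14}.
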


\begin{remark}\label{rem:rst}
 An immediate consequence of \eqref{CheckMeasurability} is the closedness of $\RST$ wrt to the weak topology induced by the continuous and bounded functions on $\X\times \R_+$ (cf.\ \cite[Corollary 3.10]{BeCoHu14} and  Lemma \ref{lem:RSTclosed}).
\end{remark}

\subsection{Randomised multi-stopping times}\label{sec:rmst}

In this section, we extend the results of the last section to the case of multiple stopping. Recall the notation defined in Section~\ref{sec:notation}. In particular,
for $d\geq 1$, recall that
 $$\Xi^d:=\{(s_1,\ldots,s_d)\in\R^d_+,s_1\leq\ldots\leq s_d\}$$
  and define $\mathsf{M}^d$ to consist of all $\xi\in\mathcal P^{\leq 1}(\X\times \Xi^d)$ such that
$$ \xi(d(x,\omega),ds_1,\ldots,ds_d)=\xi_{x,\omega}(ds_1,\ldots,ds_d)~\P(d(x,\omega)), \xi_{x,\omega} \in\mathcal P^{\leq 1}(\Xi^d) .$$
Recall that $(\bar\X,\bar\cG,(\bar \cG_t)_{t\geq 0},\bar\P)$ is defined by $\bar\X=\X\times [0,1]^d,\bar\P(A_1\times A_2)=\P(A_1)\leb^d(A_2),$ where $\leb^d$ denotes the Lebesgue measure on $\R^d$ and $\bar\cG_t$ is the usual augmentation of $\cG^0_t\otimes \mathcal B([0,1]^d)$. We mostly denote $\leb^d(du)$  by $du$. For $(u_1,\ldots,u_d)\in[0,1]^d$ we often just write $(u_1,\ldots,u_d)=u.$ We suppress the $d$- index in the notation for the extended probability space. It will either be clear from the context which $d$ we mean or we explicitly write down the corresponding spaces.

\begin{definition}\label{def:RMST}
 A measure $\xi\in\M^d$ is called \textit{randomised multi-stopping time}, denoted by $\xi\in\RMST_d$, if for all $0\leq i\leq d-1$ 
\begin{equation}\label{eq:defrmst}\tilde r_{i+1,i}(\xi^{(i+1)})\in\RST(\S{i}_\sX,r_i(\xi^i)).\end{equation}
We denote the subset of all randomised multi-stopping times with total mass 1 by $\RMST_d^1$.
If we want to stress the dependence on $(\sX,m)$ we write $\RMST_d(\sX,m)$ or $\RMST_d^1(\sX,m).$
\end{definition}

Unlike for the randomised stopping times, there is no obvious analogue of (1), (2) or (3) of Theorem~\ref{thm:equiv RST} in the multi-stopping time setting. However below we prove a representation result for randomised multi-stopping times in a similar manner to (4). The following lemma (c.f. \cite[Lemma 3.11]{BeCoHu14}) then enables us to conclude that, on an arbitrary probability space, all sequences of increasing stopping times can be represented as a randomised multi-stopping time on our canonical probability space.

\begin{lemma}\label{lem:rmst}
Let $B$ be a Brownian motion on some stochastic basis $(\Omega, \mathcal H, (\mathcal H_t)_{t\geq 0},\Q)$ with right continuous filtration. Let $\tau_1,\ldots,\tau_n$ be an increasing sequence of $\mathcal H$-stopping times and consider
$$\Phi:\Omega\to \CR\times \Xi^d, \quad \bar\omega\mapsto ((B_t)_{t\geq 0},\tau_1(\bar\omega),\ldots,\tau_n(\bar\omega)).$$
Then $\xi:=\Phi(\Q)$ is a randomized multi-stopping time and for any measurable $\gamma:\S{n}\to\R$ we have
\begin{align}\label{eq:Omegaunimportant}
 \int \gamma(f,s_1,\ldots,s_n)~r_n(\xi)(d(f,s_1,\ldots,s_n))=\E_\Q[\gamma((B_t)_{t\leq\tau_n},\tau_1,\ldots,\tau_n)].
\end{align}
If $\Omega$ is sufficiently rich that it supports a uniformly distributed random variable which is $\mathcal H_0$-measurable, then for $\xi\in\RMST$ we can find an increasing family $(\tau_i)_{1\leq i \leq n}$ of $\mathcal H$-stopping times such that $\xi=\Phi(\Q)$ and \eqref{eq:Omegaunimportant} holds.
\end{lemma}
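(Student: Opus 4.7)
\emph{Strategy.} I would prove both assertions by induction on $n$, exploiting the recursive structure of Definition~\ref{def:RMST}: the condition \eqref{eq:defrmst} at level $i$ says precisely that, once the first $i$ stopping coordinates are fixed, the $(i{+}1)$-th coordinate is a single randomised stopping time on the stopped-path space $(\S{i}_\sX,r_i(\xi^i))$. The $n=1$ version of both directions is already the content of \cite[Lemma~3.11]{BeCoHu14} combined with Theorem~\ref{thm:equiv RST}; the whole game is to iterate it, using the strong Markov property to reduce each step to the single-marginal statement applied at the correct ``starting space''.

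\emph{First direction: $\Phi(\Q)\in\RMST$.} By the inductive hypothesis applied to $(\tau_1,\ldots,\tau_{n-1})$, one already has $\proj_{\X\times(1,\ldots,n-1)}(\xi)\in\RMST_{n-1}$. It remains to verify \eqref{eq:defrmst} at the top level $i=n-1$. The strong Markov property at $\tau_{n-1}$ says that $(B_{\tau_{n-1}+u}-B_{\tau_{n-1}})_{u\geq 0}$ is a Brownian motion independent of $\mathcal H_{\tau_{n-1}}$, and $\tau_n-\tau_{n-1}$ is a stopping time for the shifted filtration $(\mathcal H_{\tau_{n-1}+u})_{u\geq 0}$. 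Treating $((B_u)_{u\leq\tau_{n-1}},\tau_1,\ldots,\tau_{n-1})$ as the ``initial condition'' in $\S{n-1}_\sX$, the $n=1$ case of the lemma applied with starting space $(\S{n-1}_\sX,r_{n-1}(\xi^{n-1}))$ shows that $\tilde r_{n,n-1}(\xi)\in\RST(\S{n-1}_\sX,r_{n-1}(\xi^{n-1}))$, which is exactly \eqref{eq:defrmst}. The identity \eqref{eq:Omegaunimportant} is then pure push-forward change of variables: $\gamma((B_u)_{u\leq\tau_n},\tau_1,\ldots,\tau_n)=\gamma\circ r_n\circ\Phi$ $\Q$-a.s.

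\emph{Second direction: constructing $(\tau_i)$ from $\xi$.} Assume $\Omega$ supports an $\mathcal H_0$-measurable uniform variable $U$, and use a Borel isomorphism $[0,1]\to[0,1]^n$ preserving Lebesgue measure to split $U$ into $n$ independent $\mathcal H_0$-measurable uniforms $U_1,\ldots,U_n$. Construct $\tau_i$ inductively. For $\tau_1$, apply Theorem~\ref{thm:equiv RST}(4) to the single randomised stopping time $\tilde r_{1,0}(\xi^{(1)})\in\RST(\sX,m)$; the resulting CDF, evaluated along the path of $B$ and thresholded by $U_1$, gives the desired $\mathcal H$-stopping time $\tau_1$. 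Given $\tau_1\leq\cdots\leq\tau_{n-1}$ whose joint law with $B$ matches $\proj_{\X\times(1,\ldots,n-1)}(\xi)$, the defining property \eqref{eq:defrmst} at level $i=n-1$ provides a randomised stopping time on $(\S{n-1}_\sX,r_{n-1}(\xi^{n-1}))$; applying Theorem~\ref{thm:equiv RST}(4) once more yields a measurable CDF, and the strong Markov property at $\tau_{n-1}$ lets one define $\tau_n-\tau_{n-1}$ as its hitting time along the shifted Brownian motion, driven by $U_n$. The $\mathcal H_0$-measurability of $U_n$ is exactly what is needed for $\tau_n$ to be an $\mathcal H$-stopping time and not merely a randomised one.

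\emph{Expected main obstacle.} The conceptual content is a single-stopping-time fact repeated $n$ times, but the delicacy lies in aligning the abstract disintegration encoded by Definition~\ref{def:RMST} with the concrete filtration on $\Omega$: one must verify that the conditional CDF produced at step $i$ is genuinely optional for the shifted filtration $(\mathcal H_{\tau_{i-1}+s})_{s\geq 0}$ acting on the path of the shifted Brownian motion, \emph{given} the stopped history $(x,(B_u)_{u\leq\tau_{i-1}},\tau_1,\ldots,\tau_{i-1})\in\S{i-1}_\sX$. This is exactly what the operators $r_i$ and $\tilde r_{i+1,i}$ from Section~\ref{sec:notation} are designed to track; once the identification between stopped paths and ``starting distributions'' is made carefully and one checks that the measurability hypothesis of Theorem~\ref{thm:equiv RST} indeed transports from $(\sX,m)$ to $(\S{i}_\sX,r_i(\xi^i))$, the inductive step is essentially the same argument as in \cite[Lemma~3.11]{BeCoHu14}.
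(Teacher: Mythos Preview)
Your proposal is correct and follows essentially the same approach as the paper: both argue by induction (the paper writes out $n=2$ and says ``proceed inductively''), reduce the base case to \cite[Lemma~3.11]{BeCoHu14}, use the strong Markov property at $\tau_{i-1}$ together with the right-continuity of the filtration to identify $\tau_i-\tau_{i-1}$ as a stopping time for the shifted filtration, and invoke Theorem~\ref{thm:equiv RST} (the paper checks condition~(2) directly, you invoke condition~(4) for the converse). The one step you gloss over that the paper makes explicit is the identity $\tilde r_i(\xi^i)=r_i(\xi^i)\otimes\W$, which is what lets you treat $(\S{i-1}_\sX,r_{i-1}(\xi^{i-1}))$ as a genuine starting space for the single-marginal lemma; your ``expected main obstacle'' paragraph correctly flags this as the place where care is needed.
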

\begin{proof}
 For notational convenience we show the first part for the case $n=2$. Let $B_0\sim m$. It then follows by \cite[Lemma 3.11]{BeCoHu14} that $\tilde r_{1,0}(\xi)\in\RST(\R,m)$. Hence, we need to show that $\tilde r_{2,1}(\xi)\in\RST(S_\R,r_1(\xi))$, i.e.~ we have to show that $\xi^2_{(f,s)}$ is $r_1(\xi)$--a.s.~ a randomized stopping time, where $(\xi^2_{(f,s)})_{(f,s)}$ denotes a disintegration of $\xi^2$ wrt $r_1(\xi).$ (Here and in the rest of the proof we assume $f(0)\in\R$ and suppress the ``$x$'' from the notation).

First we show that $\tilde r_1(\xi^1)(d(f,s),d\omega)=r_1(\xi^1)(d(f,s))\W(d\omega).$ Take a measurable and bounded $F:S_\R\times \CRo\to\R$. Then, 
using the strong Markov property in the last step, we have
 \begin{align*}
  &\int F((f,s),\omega)~\tilde r_1(\xi^1)(d(f,s),d\omega) \numberthis \label{eq:disint0}\\
=& \int F((r_\sX(\tilde\omega,s),\theta_{s}\tilde\omega)~ \xi^1(d\tilde\omega,ds)\\
=&  \int_{\Omega} F(r_\sX(B(\omega),\tau_1(\omega)),\theta_{\tau_1(\omega)}B(\omega))~\Q(d\omega)\\
=& \int F((f,s),\tilde\omega)~r_1(\xi^1)(d(f,s))\W(d\tilde\omega)~.
 \end{align*}
Let $q$ be the projection from $S_\R\times \CRo\times\R_+$ to $S_\R\times\CRo$, and $p$ be the projection from $\X\times\Xi^2\to\X\times\R_+$, $p(\omega,s_1,s_2)=(\omega,s_1).$ Then, $q\circ \tilde r_{2,1}=\tilde r_1\circ p$. Recalling that $\xi^1=p(\xi^2)$ there is a disintegration of $\tilde r_{2,1}(\xi^{1,2})$ wrt $\tilde r_1(\xi^1)$ which we denote by
$$\xi^2_{(f,s_1),\omega}(ds_2)\in\mathcal P^{\le 1}(\R_+).$$
Then, we set $\xi^2_{(f,s_1)}(d\omega,ds_2):=\xi^2_{(f,s_1),\omega}(ds_2)\W(d\omega).$ Since $d\tilde r_1(\xi^1)=dr_i(\xi^1)d\W$ the measures $\xi^2_{(f,s_1)}$ define a disintegration of $\tilde r_{2,1}(\xi^2)$ wrt $r_1(\xi^1).$ We have to show that $r_1(\xi^1)$ a.s.\ $\xi^2_{(f,s_1)}$ is a randomized stopping time. We will show property (2) in Theorem \ref{thm:equiv RST}, where now $\mathsf{X}=S_\R, m=r_1(\xi)$ and accordingly $\cG^0_t=\mathcal{B}(S_\R)\otimes\cF^0_t$ with usual augmentation $\cG_t^a$ (cf.~ Section \ref{sec:notation}).

To this end, fix $t\geq 0$ and let $g:S_\R\times\CRo\to\R$ be bounded and measurable and set $h=\E_m[g|\cG_t^a].$ Then, it holds that $\E_\Q[g(r_1(B,\tau_1),\theta_{\tau_1}B)|\mathcal H_{\tau_1+t}]=h(r_1(B,\tau_1),\theta_{\tau_1}B)$. Using rightcontinuity of the filtration $\mathcal H$ in the third step to conclude that $\tau_2-\tau_1$ is an $(\mathcal H_{\tau_1+t})_{t\geq 0}$ stopping time, this implies
\begin{align*}
& \int g((f,s),\omega)~\xi^2_{(f,s),\omega}([0,t])~r_1(\xi^1)(d(f,s))\W(d\omega)\\
=&\, \E_\Q\left[g(r_1(B,\tau_1),\theta_{\tau_1} B)\1_{\tau_2-\tau_1\leq t}\right]\\
=&\,\E_\Q\left[\E_\Q\left[g(r_1(B,\tau_1),\theta_{\tau_1} B)|\mathcal H_{\tau_1+t}\right]\1_{\tau_2-\tau_1\leq t}\right]\\
=&\,\E_\Q\left[h(r_1(B,\tau_1),\theta_{\tau_1} B)\1_{\tau_2-\tau_1\leq t}\right]\\
=& \int h((f,s),\omega)~\xi^2_{(f,s),\omega}([0,t])~r_1(\xi^1)(d(f,s))\W(d\omega).
\end{align*}
This shows the first part of the lemma. 

To show the second part of the lemma we start by constructing an increasing sequence of stopping times on the extended canonical probability space  $(\bar\X,\bar\cG,(\bar \cG_t)_{t\geq 0},\bar\P)$. By Theorem \ref{thm:equiv RST} and the assumption that $\xi^{1}\in\RST(\sX,m)$ there is a $\bar\cG$ stopping time $\rho^1(x,\omega,u)=\rho^1(x,\omega,u_1)$ defining a disintegration of $\xi^{1}$ wrt $\P$ via
$$\int \delta_{\rho^1}(ds_1)~du~.$$
By assumption, $\tilde r_{2,1}(\xi^2)\in\RST(S_\sX,r_1(\xi^1))$. Hence, writing $s_2'=s_2-s_1$ we can disintegrate 
$$\xi^2(d(x,\omega),ds_1,ds_2)=\int \xi^2_{r_\sX(x,\omega,\rho^1(x,\omega,u_1))}(\theta_{\rho^1(x,\omega,u_1)}\omega,ds_2')\delta_{\rho^1(x,\omega,u_1)}(ds_1) du_1$$
such that for $r_1(\xi^1)$ a.e. $(x,f,s_1)$ the  disintegration $\xi^2_{(x,f,s_1)}$ is a randomized stopping time. Again by Theorem \ref{thm:equiv RST} there is a stopping time $\tilde\rho^2_{x,f,s_1}(\tilde\omega,u_2)$ representing $\xi^2_{(x,f,s_1)}$ as in \eqref{eq:rhodefn}. Then,
$$\rho^2(x,\omega,u_1,u_2):= \rho^1(x,\omega,u_1)+\tilde\rho^2_{r_\sX(x,\omega,\rho_1(x,\omega,u_1))}(\theta_{\rho_1(x,\omega,u_1)}\omega,u_2)$$
defines a $\bar\cG$ stopping time such that 
\begin{align*}
 (x,\omega)\mapsto \int_{[0,1]^d} \delta_{\rho^1(x,\omega,u)}(dt_1) \delta_{\rho^2(x,\omega,u)}(dt_2)\ du
\end{align*}
defines a $\cG^a$- measurable disintegration of $\xi^2$ w.r.t.\ $\P$. We proceed inductively. To finish the proof, let $U$ be the $[0,1]^d$--valued uniform $\mathcal H_0$--measurable random variable. Then $\tau_i:=\rho^i(B,U)$ define the required increasing family of $\mathcal H$ stopping times.
\end{proof}
\begin{remark}
Lemma \ref{lem:rmst} shows that optimizing over an increasing family of stopping times on a rich enough probability space in  \eqref{eq:Pgamma} is equivalent to optimizing over randomized multi-stopping times on the Wiener space.
\end{remark}

\begin{corollary}\label{cor:canonicalRMST}
Let $\xi\in\RMST$. On the extended canonical probability space  $(\bar\X,\bar\cG,(\bar \cG_t)_{t\geq 0},\bar\P)$ there exists an increasing sequence $(\rho^i)_{i=1}^d$ of $\bar\cG$- stopping times such that 
\begin{enumerate}
 \item for $u=(u_1,\ldots,u_d)\in [0,1]^d$ and for each $1\leq i\leq d$ we have $\rho^i(x,\omega,u)=\rho^i(x,\omega,u_1,\ldots,u_i)$;
\item \begin{align}\label{eq:STdisint}
 (x,\omega)\mapsto \int_{[0,1]^d} \delta_{\rho^1(x,\omega,u)}(dt_1)\cdots \delta_{\rho^d(x,\omega,u)}(dt_d)\ du
\end{align}
defines a $\cG^a$- measurable disintegration of $\xi$ w.r.t.\ $\P$.
\end{enumerate}
\end{corollary}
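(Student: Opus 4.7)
The construction is essentially the one carried out in the second half of the proof of Lemma~\ref{lem:rmst}; the plan is to iterate it inductively on $i$ and package the output in the stated form.

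The base case $i=1$ is immediate: by Definition~\ref{def:RMST} applied with $i=0$, we have $\tilde r_{1,0}(\xi^{(1)}) = \xi^{(1)} \in \RST(\sX,m)$, so Theorem~\ref{thm:equiv RST}(4) produces a Borel map
\begin{equation*}
\rho^1(x,\omega,u_1) := \inf\{t\geq 0 : \xi^{(1)}_{x,\omega}([0,t]) \geq u_1\}
\end{equation*}
which is a $\bar\cG$-stopping time, depends only on $u_1$, and satisfies $\xi^{(1)}(d(x,\omega),dt_1)=\int_{[0,1]}\delta_{\rho^1(x,\omega,u_1)}(dt_1)\,du_1\,\P(d(x,\omega))$.

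For the inductive step, assume $\rho^1,\ldots,\rho^{i-1}$ have been constructed so that $\rho^j$ depends only on $(x,\omega,u_1,\ldots,u_j)$, the sequence is $\bar\P$-a.s.\ increasing in $j$, and
\begin{equation*}
\xi^{(i-1)}(d(x,\omega),dt_1,\ldots,dt_{i-1}) = \int_{[0,1]^{i-1}} \delta_{\rho^1}(dt_1)\cdots\delta_{\rho^{i-1}}(dt_{i-1})\,du_1\cdots du_{i-1}\,\P(d(x,\omega)).
\end{equation*}
By Definition~\ref{def:RMST}, $\tilde r_{i,i-1}(\xi^{(i)}) \in \RST(\S{i-1}_\sX, r_{i-1}(\xi^{(i-1)}))$. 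Disintegrate
\begin{equation*}
\tilde r_{i,i-1}(\xi^{(i)})(d(x,f,s_1,\ldots,s_{i-1}),d\tilde\omega,ds') = \eta_{(x,f,s_1,\ldots,s_{i-1})}(d\tilde\omega,ds')\,r_{i-1}(\xi^{(i-1)})(\cdot),
\end{equation*}
where $s' = s_i - s_{i-1}$; the same argument as in \eqref{eq:disint0} shows the $\tilde\omega$-marginal is Wiener measure, and Theorem~\ref{thm:equiv RST}(4) applied to $r_{i-1}(\xi^{(i-1)})$-a.e.\ fibre yields a Borel stopping time $\tilde\rho^{i}_{(x,f,s_1,\ldots,s_{i-1})}(\tilde\omega,u_i)$ representing $\eta$. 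Following the formula in the proof of Lemma~\ref{lem:rmst}, set
\begin{equation*}
\rho^i(x,\omega,u_1,\ldots,u_i) := \rho^{i-1}(x,\omega,u_1,\ldots,u_{i-1}) + \tilde\rho^i_{r_{\sX,i-1}(x,\omega,\rho^1,\ldots,\rho^{i-1})}\bigl(\theta_{\rho^{i-1}}\omega, u_i\bigr),
\end{equation*}
which by construction is $\bar\cG$-measurable, $\geq \rho^{i-1}$, depends only on $(x,\omega,u_1,\ldots,u_i)$, and produces the desired disintegration of $\xi^{(i)}$.

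That $\rho^i$ is a $\bar\cG$-stopping time follows from the strong Markov property applied at the $\bar\cG$-stopping time $\rho^{i-1}$, together with the fact that $\tilde\rho^i$ is a stopping time in the shifted filtration; this is the same verification as in the proof of Lemma~\ref{lem:rmst}. The main technical point to check carefully is the measurability of $(x,f,s_1,\ldots,s_{i-1}) \mapsto \tilde\rho^i_{(x,f,s_1,\ldots,s_{i-1})}$, which is handled by the standard Borel selection underlying Theorem~\ref{thm:equiv RST}(4) (taking the right-continuous inverse of the cumulative distribution function $A$). Iterating up to $i=d$ gives the required sequence $(\rho^i)_{i=1}^d$, and \eqref{eq:STdisint} follows from telescoping the disintegrations at each stage.
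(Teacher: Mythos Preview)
Your proof is correct and follows essentially the same route as the paper: the corollary is not given a separate proof there but is simply extracted from the construction in the second half of the proof of Lemma~\ref{lem:rmst}, which you have reproduced and made explicit for general $d$ (the paper only spells out the step from $\rho^1$ to $\rho^2$). The only minor remark is notational: the paper writes the index map as $r_\sX$ (or $r_i$ by the abuse of notation declared in Section~\ref{sec:notation}) rather than $r_{\sX,i-1}$, but your intent is clear.
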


Next we introduce some notation to state  another straightforward corollary.
It is easy to see that
$q^{d,i} \circ \tilde{r}_{d,i} = \tilde{r}_i\circ p^{d,i}$, where
$q^{d,i}$ is the projection from
$\S{i}_\sX\times C(\R_+) \times \Xi^{d-i}$ to $\S{i}_\sX\times
C(\R_+)$, and $p^{d,i}$ is the projection from $\X\times\Xi^d$ to
$\X\times\Xi^i$ defined by 
$$(x,\omega,s_1,\ldots,s_d)\mapsto (x,\omega,s_1,\ldots,s_i).$$ 
Recalling that $\xi^i = p^{d,i}(\xi)$, it follows that there
exists a disintegration of $\tilde r_{d,i}(\xi)$ with respect to
$\tilde{r}_i(\xi^i)$, which we denote by:
$$ \xi_{(x,f,s_1,\ldots,s_i),\omega}(ds_{i+1},\ldots,ds_d) \in \mathcal P( \Xi^{d-i}).$$
Moreover, we set
$$ \xi_{(x,f,s_1,\ldots,s_i)}(d\omega,ds_{i+1},\ldots,ds_d):=\xi_{(x,f,s_1,\ldots,s_i),\omega}(ds_{i+1},\ldots,ds_d) \ \W(d\omega) \in \mathcal P(C(\R_+) \times \Xi^{d-i}).$$
The map $(x,f,s_1,\ldots,s_i)\mapsto \xi_{(x,f,s_1,\ldots,s_i)}$ inherits
measurability from the joint measurability of
$((x,f,s_1,\ldots,s_i),\omega)\mapsto \xi_{(x,f,s_1,\ldots,s_i),\omega}.$
In particular, $\xi_{(x,f,s_1,\ldots,s_i)}$
defines a disintegration of $\tilde r_{d,i}(\xi)$ w.r.t.\ $r_i(\xi^i)$, since $d\tilde r_i(\xi^i) = d\W dr_i(\xi^i)$ by the same calculation as \eqref{eq:disint0}. Following exactly the line of reasoning as in the first part of the proof of Lemma \ref{lem:rmst} yields
\begin{corollary}\label{cor:xii rmst}
 Let $\xi\in\RMST_d(\sX,m)$ and $1\leq i<d$. Then, for $r_i(\xi^i)$ a.e.\ $(x,f,s_1,\ldots,s_i)$ we have $\xi_{(x,f,s_1,\ldots,s_i)}\in\RMST_{d-i}(\{0\},\delta_0).$
\end{corollary}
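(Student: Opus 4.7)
The plan is to mirror the first part of the proof of Lemma~\ref{lem:rmst}, now applied to the stopping times \emph{after} the $i$-th one. Fix $i \in \{1, \ldots, d-1\}$, write $\eta := \xi_{(x,f,s_1,\ldots,s_i)}$, and unfold the definition of $\RMST_{d-i}(\{0\}, \delta_0)$: I must check that, for $r_i(\xi^i)$-a.e.\ $(x,f,s_1,\ldots,s_i)$ and every $0 \leq k \leq d-i-1$,
$$\tilde r_{k+1,k}(\eta^{(k+1)}) \in \RST(\S{k}_{\{0\}}, r_k(\eta^k)).$$

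To produce the relevant stopping times, I would invoke Corollary~\ref{cor:canonicalRMST} to represent $\xi$ via an increasing sequence $\rho^1 \leq \ldots \leq \rho^d$ of $\bar\cG$-stopping times on $(\bar\X, \bar\cG, (\bar\cG_t)_{t \geq 0}, \bar\P)$, and then set $\tilde\rho^k := \rho^{i+k} - \rho^i$ for $k = 1, \ldots, d-i$. The strong Markov property applied at $\rho^i$, combined with right-continuity of the augmented filtration (cf.\ Remark~\ref{rem:pred ST}), yields that conditionally on $\bar\cG_{\rho^i}$ the triple $(\theta_{\rho^i}\omega, u_{i+1}, \ldots, u_d)$ has law $\W \otimes \leb^{d-i}$ and is independent of $\bar\cG_{\rho^i}$; moreover each $\tilde\rho^k$ is a stopping time of the shifted filtration $(\bar\cG_{\rho^i+s})_{s \geq 0}$, hence in the conditional picture a stopping time of the natural filtration of $(\theta_{\rho^i}\omega, u_{i+1}, \ldots, u_d)$.

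Applying the first part of Lemma~\ref{lem:rmst} conditionally, with background space $(\{0\}, \delta_0)$ and stopping times $\tilde\rho^1 \leq \ldots \leq \tilde\rho^{d-i}$, gives for $\bar\P$-a.e.\ realisation of $\bar\cG_{\rho^i}$ that the conditional pushforward of $\W \otimes \leb^{d-i}$ under $(\omega', u') \mapsto (\omega', \tilde\rho^1, \ldots, \tilde\rho^{d-i})$ is an element of $\RMST_{d-i}(\{0\}, \delta_0)$. By the same Fubini/Markov computation as in \eqref{eq:disint0}, this conditional measure, further averaged over the portion of $\bar\cG_{\rho^i}$ not determined by $r_i$ applied to $(x,\omega,\rho^1,\ldots,\rho^i)$, agrees with $\eta$ for $r_i(\xi^i)$-a.e.\ $(x,f,s_1,\ldots,s_i)$.

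The main obstacle is to verify that this final averaging step preserves membership in $\RMST_{d-i}$. This reduces to the observation that the characterisation \eqref{CheckMeasurability} of Theorem~\ref{thm:equiv RST}(3), applied inductively at each layer of the nested definition of $\RMST_{d-i}$, imposes linear integral constraints which are stable under mixtures parameterised measurably. Granting this, what remains is pure bookkeeping of the correspondence between $\bar\cG_{\rho^i}$, the sub-$\sigma$-algebra generated by $(x,f,s_1,\ldots,s_i)$, and the canonical filtration on $C_0(\R_+) \times [0,1]^{d-i}$, from which one reads off the desired $\RST$-property of $\tilde r_{k+1,k}(\eta^{(k+1)})$ at each level $k$.
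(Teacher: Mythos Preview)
Your approach is essentially the same as the paper's, which simply instructs the reader to follow ``exactly the line of reasoning as in the first part of the proof of Lemma~\ref{lem:rmst}'': represent $\xi$ via the stopping times $\rho^1,\ldots,\rho^d$ from Corollary~\ref{cor:canonicalRMST} and rerun that argument. You are more explicit than the paper about the averaging step over the auxiliary randomisation and about why the linear characterisation \eqref{CheckMeasurability} makes the $\RST$ property stable under that mixture.

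One technical slip worth fixing: under the paper's convention $\bar\cG_t$ is the augmentation of $\cG^0_t\otimes\mathcal B([0,1]^d)$, so \emph{all} coordinates $u_1,\ldots,u_d$ are already $\bar\cG_0$-measurable. Hence $(\theta_{\rho^i}\omega,u_{i+1},\ldots,u_d)$ cannot be independent of $\bar\cG_{\rho^i}$ as you assert. The remedy is to condition on the smaller $\sigma$-algebra generated by $(x,\omega_{\llcorner[0,\rho^i]},u_1,\ldots,u_i)$; this is legitimate because Corollary~\ref{cor:canonicalRMST}(1) gives $\rho^j=\rho^j(x,\omega,u_1,\ldots,u_j)$ for $j\le i$, so $\rho^i$ is a stopping time of the filtration built only from $\cG^0\otimes\mathcal B([0,1]^i)$. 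Relative to that filtration your independence claim is correct and the rest of the argument goes through. (Also, Remark~\ref{rem:pred ST} concerns predictability rather than right-continuity; the latter comes from the usual augmentation.)
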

\begin{remark}\label{rem:i=0}
 We note that the last Corollary still holds for $i=0$ by setting $\S{0}_\R=\R, r_0(\xi^j)=m$. Then, the result says that for a disintegration $(\xi_x)_x$ of $\xi$ w.r.t.\ $m$ for $m$-a.e.\ $x\in \sX$ we have
$\xi_x\in\RMST_d.$ Of course this can also trivially be seen as a consequence of $\P=m\otimes\W.$
\end{remark}

An important property of $\RMST$ is the following Lemma.

\begin{lemma}\label{lem:RSTclosed}
 $\RMST$ is closed w.r.t.\ the weak topology induced by the continuous and bounded functions on $\X\times\Xi^d.$
\end{lemma}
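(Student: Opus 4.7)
The plan is to realise $\RMST_d$ as an intersection of weakly closed subsets of $\M^d$, each cut out by the vanishing of an integral against a continuous bounded test function on $\X\times\Xi^d$, and then invoke that such linear conditions pass to weak limits.

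First, I would check that $\M^d$ itself is weakly closed: if $\xi^n\in\M^d$ converges weakly to $\xi$, then for every non-negative $\phi\in C_b(\X)$ one has $\int \phi\,d\proj_\X(\xi)=\lim_n\int \phi\,d\proj_\X(\xi^n)\leq\int\phi\,d\P$, so $\proj_\X(\xi)\leq\P$ and the disintegration structure defining $\M^d$ is preserved.

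Next, for each fixed $0\leq i\leq d-1$ I would translate the clause $\tilde r_{i+1,i}(\xi^{(i+1)})\in\RST(\S{i}_\sX,r_i(\xi^i))$ via condition (3) of Theorem~\ref{thm:equiv RST} into the requirement that
\begin{align*}
\int \tilde f(s_{i+1}-s_i)\Bigl[&G\bigl(x,\omega|_{[0,s_i]},s_1,\ldots,s_i,\theta_{s_i}\omega\bigr)\\
&-G^M_t\bigl(x,\omega|_{[0,s_i]},s_1,\ldots,s_i,\theta_{s_i}\omega\bigr)\Bigr]\,d\xi=0
\end{align*}
for all $t\geq 0$, all $\tilde f\in C_b(\R_+)$ supported in $[0,t]$ and all $G\in C_b(\S{i}_\sX\times C_0(\R_+))$, where $G^M_t(y,\omega)=\int G(y,\omega|_{[0,t]}\oplus\omega')\,d\W(\omega')$. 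The decisive point here is that the conditional expectation operator $G\mapsto G^M_t$ is defined purely in terms of Wiener measure and is therefore \emph{independent} of the base measure $r_i(\xi^i)$, so the resulting constraint is genuinely linear in $\xi$. Integrating against $\xi$ rather than $\xi^{(i+1)}$ is harmless since the integrand does not depend on $s_{i+2},\ldots,s_d$.

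Finally, by Proposition~\ref{prop:very cont} (applied on the Polish space $\S{i}_\sX$), $G\mapsto G^M_t$ maps $C_b$ to $C_b$; combined with the continuity of the maps $(x,\omega,s_i)\mapsto(x,\omega|_{[0,s_i]})$ and $(x,\omega,s_i)\mapsto\theta_{s_i}\omega$ and of $(s_i,s_{i+1})\mapsto s_{i+1}-s_i$, the whole bracketed integrand is a continuous bounded function on $\X\times\Xi^d$. Hence each of the above equations cuts out a weakly closed subset of $\M^d$, and $\RMST_d$ is their intersection over $i$, $t$, $\tilde f$ and $G$. The only subtle step is verifying the characterisation itself: one must check that Theorem~\ref{thm:equiv RST}(3), which is stated for a fixed Polish base measure, can be applied on the varying base $(\S{i}_\sX, r_i(\xi^i))$ without the test-function class depending on $\xi$, and this boils down precisely to the observation that $G^M_t$ is intrinsic to Brownian motion.
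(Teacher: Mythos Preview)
Your proposal is correct and follows essentially the same route as the paper: both express the defining $\RMST$ condition via the integral test from Theorem~\ref{thm:equiv RST}(3), observe that the conditional expectation $G^M_t$ is intrinsic to Wiener measure and hence independent of the varying base $r_i(\xi^i)$, and then invoke Proposition~\ref{prop:very cont} for continuity of the resulting integrand. The only presentational differences are that the paper nominally starts from condition (2) of Theorem~\ref{thm:equiv RST} before arriving at the same integral form, and does not separately verify closedness of $\M^d$.
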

\begin{proof}
 We fix $0\leq i\leq d-1$ and consider  the Polish space $\tilde \sX=\S{i}_\sX$ with corresponding $\tilde \X=\tilde \sX\times\CRo$ and $\P=r_i(\xi^i)\otimes\W$. To show the defining property \eqref{eq:defrmst} in Definition \ref{def:RMST} we consider condition (2) in Theorem \ref{thm:equiv RST}; the goal is to express measurability of $Z_t(x,\omega):= \xi^{i+1}_{x,\omega}(f), f\in  C_b([0,t]), x\in \S{i}_\sX, \omega\in\CRo$ in a different fashion. Note that a bounded Borel function $h$ is $\cG_t^0$-measurable iff for all   bounded Borel functions $G:\tilde\X\to\R$ 
$$ \E[h G]= \E [h \E[G|\cG_t^0]],$$
of course this does not rely on our particular setup. 
By a functional monotone class argument, for $\cG_t^0$-measurability of $Z_t$ it is sufficient to check 
that 
\begin{align}\label{MeasurabilityCheck}
\E[Z_t (G-\E[G|\cG_t^0])]=0
\end{align}
for all $G\in C_b(\tilde\X)$. 
In terms of $\xi^{i+1}$, \eqref{MeasurabilityCheck} amounts to 
\begin{align*}
0=\E[Z_t (G-\E[G|\cG_t^0])] \ & =\int \, \P (dx,d\omega) \int \xi^{i+1}_{x,\omega}(ds) f(s) (G-\E[G|\cG_t^0])(x,\omega)\\
& =\int f(s) (G-\E[G|\cG_t^0])(x,\omega) \, \tilde r_{i+1,i}(\xi^{i+1})(dx,d\omega, ds),
\end{align*}
which is a closed condition by Proposition~\ref{prop:very cont}. 
\end{proof}

Given $\xi\in\M^d$ and $s\geq 0$ we define the random measure $\xi\wedge s$  on $\Xi^d$ by setting for $A\subset \Xi^d$ and each $(x,\omega)\in \X$  
$$(\xi\wedge s)_{x,\omega}(A)= \int \1_A(s_1\wedge s,\ldots,s_d\wedge s)~\xi_{x,\omega}(ds_1,\ldots,ds_d).$$

Assume that $(M_s)_{s\geq 0}$ is a process on $\X$. Then  $(M_s^\xi)_{s\geq 0}$ is defined to be the \emph{probability measure on} $\R^{d+1}$ such that for all bounded and measurable functions $F:\R^{d+1}\to \R$
$$\int_{\R^{d+1}} F(y)\ M_s^\xi(dy)=\int_{\X\times\Xi^d} F(M_0(x,\omega),M_{s_1}(x,\omega),\ldots,M_{s_d}(x,\omega))\ (\xi\wedge s)(dx,d\omega,ds_1,\ldots,ds_d).$$
This means that $M_s^\xi$ is the image measure of $\xi\wedge s$ under the map $M:\X\times\Xi^d\to \R^{d+1}$ defined by
$$(x,\omega,s_1,\ldots,s_d)\mapsto (M_0(x,\omega),M_{s_1}(x,\omega),\dots,M_{s_d}(x,\omega)).$$
We write $\lim_{s\to\infty}M^\xi_s=M_\xi$ if it exists.

\subsection{The set $\RMST(\mu_0,\mu_1,\ldots,\mu_n)$, compactness and existence of optimisers.} \label{sec:optimExists}

In this subsection, we specialise our setup to $\sX=\R, m=\mu_0\in\mathcal P(\R)$ and $d=n$.
Let $\mu_0,\mu_1,\ldots,\mu_n\in\mathcal P(\R)$ be centered, in convex order and with finite 
second moment\footnote{It is possible to relax this, see \cite[Section 8]{BeCoHu14}} $\int x^2 \mu_i(dx)=V_i<\infty$ for all $i\leq n$. In particular $V_i\leq V_{i+1}.$ For $t\geq 0$ we set $B_t(x,\omega)=x+\omega_t~.$ We extend $B$ to the extended probability space $\bar\Xs$ by setting $\bar B(x,\omega,u)=B(x,\omega)$. By considering the martingale $\bar B_t^2-t$ we immediately get (see the proof of Lemma 3.12 in \cite{BeCoHu14} for more details)

\begin{lemma}\label{lem:ui}
 Let $\xi\in\RMST_n$ and assume that $B_\xi=(\mu_0,\mu_1,\ldots,\mu_n)$. Let $(\rho_1,\ldots,\rho_n)$ be any representation of $\xi$ granted by Lemma~\ref{lem:rmst}. Then, the following are equivalent 
\begin{enumerate}
 \item $\bar\E[\rho^i]<\infty$ for all $1\leq i\leq n$
\item $\bar\E[\rho^i]=V_i-V_0$ for all $1\leq i\leq n$
\item $ (\bar B_{\rho^i\wedge t})_{t\geq0}  $ is uniformly integrable for all $1\leq i\leq n$.
\end{enumerate}
Of course it is sufficient to test any of the above quantities for $i=n$.
\end{lemma}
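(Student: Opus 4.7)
The key observation is that although $\xi$ encodes $n$ stopping times jointly, each of the three conditions involves only the \emph{marginal} behaviour of the single stopping time $\rho^i$, which under $\bar\P$ satisfies $\bar B_{\rho^i}\sim\mu_i$. So the claim reduces, $i$ by $i$, to a classical fact about Brownian motion stopped at a single time, and I plan to present it as such.

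The main tool will be the martingale $N_t:=\bar B_t^2-t$. Since $\rho^i\wedge t$ is a bounded $\bar{\mathcal G}$-stopping time, optional stopping gives
\begin{equation}\label{eq:OS}
\bar\E\bigl[\bar B_{\rho^i\wedge t}^2\bigr]-\bar\E[\rho^i\wedge t]=\bar\E[\bar B_0^2]=V_0,
\end{equation}
and monotone convergence yields $\bar\E[\rho^i\wedge t]\uparrow\bar\E[\rho^i]\in[0,\infty]$.

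The implication (2)$\Rightarrow$(1) is trivial. For (1)$\Rightarrow$(2) and (1)$\Rightarrow$(3): from \eqref{eq:OS} and $\bar\E[\rho^i]<\infty$, the martingale $M_t:=\bar B_{\rho^i\wedge t}$ is bounded in $L^2$, hence a UI martingale that converges a.s.\ and in $L^2$ to $\bar B_{\rho^i}$. Uniform integrability is (3), and passing to the limit in \eqref{eq:OS} gives $V_i=\bar\E[\bar B_{\rho^i}^2]=V_0+\bar\E[\rho^i]$, which is (2). For (3)$\Rightarrow$(1): since $\bar B_{\rho^i}\sim\mu_i\in L^2$ by hypothesis, Doob's maximal inequality applied to the UI martingale $M_t$ gives $\bar\E[\sup_t M_t^2]\le 4\bar\E[\bar B_{\rho^i}^2]=4V_i<\infty$. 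Thus $(\bar B_{\rho^i\wedge t}^2)_{t\ge 0}$ is dominated by an integrable random variable, so $\bar\E[\bar B_{\rho^i\wedge t}^2]\to V_i$; combining with \eqref{eq:OS} and monotone convergence yields $\bar\E[\rho^i]=V_i-V_0<\infty$.

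I do not anticipate any real obstacle — the only mild subtlety is that $\bar B_0$ is not constant but only has law $\mu_0$, and is handled by using $\bar\E[\bar B_0^2]=V_0$ in the optional stopping identity. The final ``sufficient to test $i=n$'' assertion is immediate from $\rho^i\le\rho^n$, which gives $\bar\E[\rho^n]<\infty\Rightarrow \bar\E[\rho^i]<\infty$ for all $i\le n$, and then the equivalences above supply the remaining conclusions for each $i$.
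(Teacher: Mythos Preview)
Your proof is correct and follows exactly the approach the paper indicates: the paper's proof consists solely of the remark ``By considering the martingale $\bar B_t^2-t$ we immediately get (see the proof of Lemma~3.12 in \cite{BeCoHu14} for more details)'', and you have simply spelled out those details. The only small point worth making explicit is that in your (3)$\Rightarrow$(1) step you implicitly use $\rho^i<\infty$ a.s.\ to identify $M_\infty$ with $\bar B_{\rho^i}$; this is automatic since a UI (hence $L^1$-bounded) martingale converges a.s., while on $\{\rho^i=\infty\}$ one would have $M_t=\bar B_t$, which does not converge.
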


\begin{definition}
 We denote by $\RMST(\mu_0,\mu_1,\ldots,\mu_n)$ the set of all randomised multi-stopping times satisfying one of the conditions in Lemma \ref{lem:ui}.
\end{definition}

By pasting solutions to the one marginal Skorokhod embedding problem one can see that the set $\RMST(\mu_0,\mu_1,\ldots,\mu_n)$ is non-empty. However, the most important property is

\begin{proposition}\label{prop:RMSTcompact}
 The set $\RMST(\mu_0,\mu_1,\ldots,\mu_n)$ is compact wrt to the topology induced by the continuous and bounded functions on $\CR\times\Xi^d$.
\end{proposition}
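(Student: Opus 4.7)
The plan is to establish both tightness and closedness of $\RMST(\mu_0,\ldots,\mu_n)$ as a subset of $\mathcal{P}(\X\times\Xi^d)$; Prokhorov then delivers compactness.

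\textbf{Tightness.} Every $\xi\in\RMST(\mu_0,\ldots,\mu_n)$ has fixed $\X$-marginal equal to $\P=\mu_0\otimes\W$ (this is built into the definition of $\M^d$), which is a single Radon measure and therefore tight. For the $\Xi^d$-component, Lemma~\ref{lem:ui} furnishes the uniform bound $\E_\xi[\tau_n]=V_n-V_0$; since $0\leq\tau_1\leq\cdots\leq\tau_n$, Markov's inequality gives tightness of each $\tau_i$-marginal, hence of the joint law on $\Xi^d$. The product of tight families is tight, so $\RMST(\mu_0,\ldots,\mu_n)$ is relatively compact on $\X\times\Xi^d$.

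\textbf{Closedness.} Suppose $\xi_k\to\xi$ weakly with $\xi_k\in\RMST(\mu_0,\ldots,\mu_n)$. First, Lemma~\ref{lem:RSTclosed} gives $\xi\in\RMST_n$. Next, for any $\phi\in C_b(\R)$ and $1\leq i\leq n$, the map
\[
(x,\omega,s_1,\ldots,s_n)\longmapsto \phi(B_{s_i}(x,\omega))=\phi\bigl(x+\omega(s_i)\bigr)
\]
is bounded and continuous on $\X\times\Xi^d$, using joint continuity of evaluation $(\omega,t)\mapsto\omega(t)$ for the uniform-on-compacts topology. Hence
\[
\int\phi\,d\mu_i=\int \phi(B_{s_i})\,d\xi_k\longrightarrow \int \phi(B_{s_i})\,d\xi,
\]
so $B_{\tau_i}\sim\mu_i$ under $\xi$, i.e.\ $B_\xi=(\mu_0,\ldots,\mu_n)$. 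Finally, apply Fatou's lemma to the continuous nonnegative map $(x,\omega,s_1,\ldots,s_n)\mapsto s_n$:
\[
\E_\xi[\tau_n]\;\leq\;\liminf_{k}\E_{\xi_k}[\tau_n]\;=\;V_n-V_0\;<\;\infty.
\]
Together with $B_\xi=(\mu_0,\ldots,\mu_n)$, Lemma~\ref{lem:ui} then upgrades this to $\E_\xi[\tau_n]=V_n-V_0$ and yields $\xi\in\RMST(\mu_0,\ldots,\mu_n)$.

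\textbf{Main obstacle.} The only delicate point is passing the marginal condition $B_{\tau_i}\sim\mu_i$ to the limit, since the coordinate $B_{s_i}$ itself is unbounded on $\X\times\Xi^d$ and weak convergence does not directly control $\int B_{s_i}^p\,d\xi_k$. Testing only against bounded continuous $\phi\circ B_{s_i}$ circumvents this and identifies the marginal by its characteristic function. The second potential worry, that Fatou only gives an inequality for $\E_\xi[\tau_n]$, is dissolved by Lemma~\ref{lem:ui}, which converts the bare integrability $\E_\xi[\tau_n]<\infty$ into the required moment identity once the marginal condition is in place.
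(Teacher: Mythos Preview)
Your proof is correct and follows the same Prokhorov route (tightness plus closedness) that underlies the paper's argument. The paper simply outsources the tightness step to \cite[Theorem~3.14]{BeCoHu14}, where compactness of $\RST(\mu_n)$ is established via exactly the moment bound $\E[\tau_n]=V_n-V_0$ you invoke, and then notes that $\RMST(\mu_0,\ldots,\mu_n)$ is closed (leaving the verification that the marginal constraints pass to the limit implicit); you have written out both halves explicitly.
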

\begin{proof}
 This is a direct consequence of the compactness of $\RST(\mu_n)$ established in \cite[Theorem 3.14]{BeCoHu14} as the set $\RMST(\mu_0,\mu_1,\ldots,\mu_n)$  is closed.
\end{proof}

This result allows us to deduce one of the critical results for our optimisation problem:

\begin{proof}[Proof of Theorem \ref{thm:exists}]
 This follows from Proposition \ref{prop:RMSTcompact} and the Portmanteau theorem.
\end{proof}

\subsection{Joinings of Stopping times}
We now introduce the notion of a joining; these will be used later to
define new stopping times which are candidate competitors for our
optimisation problem.

\begin{definition}\label{def:joinings}
 Let $(\Ys,\sigma)$ be a Polish probability space. The set $\TRT(m,\sigma)$ of joinings between $\P=m\otimes \W$ and $\sigma$ is defined to consist of all subprobability measures $\pi\in\mathcal P^{\leq 1}(\X \times \R_+\times \Ys)$ such that
\begin{itemize}
 \item $\proj_{\X\times\R_+}(\pi_{\llcorner \X\times\R_+\times B})\in\RST(\sX,m)$ for all $B\in\mathcal B(\Ys)$;
\item $\proj_{\X}(\pi)=\P$
\item $\proj_\Ys(\pi)\leq\sigma\;.$
\end{itemize}
\end{definition}

\begin{example}\label{ex:joinings}
 An important example in the sequel will be the probability space $(\X,\P)$ constructed from  $\sX=\S{i}_\R$ and $ m=r_{i}(\xi^{i})$ for $\xi\in\RMST^1_n(\R,\mu_0)$ and $0\leq i<n$, where we set $\S{0}=\R, r_0(\xi^0)=\mu_0$ leading to $\X=\S{i}_\R\times C(\R_+)$ and $\P=r_i(\xi^i)\W=\tilde r_i(\xi^i)$ (cf.\ Corollary \ref{cor:xii rmst}). 
\end{example}

\section{Colour Swaps, Multi-colour Swaps and Stop-Go pairs}
\label{sec:CSMCSSGPairs}

In this section, we will define the general notion of stop-go pairs which was already introduced in a weaker form in Section \ref{sec:probint}. We will do so in two steps. First we define colour swap pairs and then we combine several colour swaps to get multi-colour swaps. Together, they build the stop-go pairs.

 Our basic intuition for the different swapping rules
  comes from the following picture. We imagine that each of the
  measures $\mu_1,\ldots,\mu_n$ carries a certain colour, i.e.\ the
  measure $\mu_i$ carries colour $i$. The Brownian motion will be
  thought of being represented by a particle of a certain colour: at
  time zero the Brownian particle has colour 1 and when it is stopped
  for the $i$-th time it changes its colour from $i$ to $i+1$ (cf.\
  Figure \ref{fig:BadPairs} in Section \ref{sec:probint}).

  In identifying a stop-go pair, we want to consider two sub-paths,
  $(f, s_1, \dots, s_i)$ and $(g,t_1, \dots, t_{i})$, and imagine the
  future stopping rules, which will now be a sequence of colour
  changes, obtained by concatenating a path $\omega$ onto the two
  paths. The simplest way of creating a new stopping rule is simply to
  exchange the coloured tails. This will preserve the marginal law of the
  stopped process, while generating a new multi-stopping time.
  A generalisation of this rule would be to try and swap back to the
  original colour rule at the $j$th colour change, where $i < j$. In
  this case, one would swap the colours until the first time one of
  the paths would stop for the $j$th time, after which one attempts to
  revert to the previous stopping rule. Note however that this may not
  be possible: if the other path has not yet reached the $j-1$st
  colour change, then the rules cannot be stopped, since one would
  have to switch from the $j$th colour to the $j-1$st colour, which is
  not allowed. Instead, in such a case, we simply keep the swapped
  colourings. We call recolouring rules of this nature \emph{colour swaps}
  (or $i \leftrightarrow j$ colour swaps). We will define such colour
  swap pairs in Section~\ref{sec:CS}.

  After consideration of these colour swaps, it is clear that the
  determination of when to revert to the original stopping rule could
  be determined in a more sophisticated manner. For example, instead
  of trying to revert only on the $j$th colour change, one could
  instead try to revert on every colour change, and revert the first
  time it is possible to revert. This recolouring rule gives us a
  second set of possible path swaps, and we call such pairs
  \emph{multi-colour swaps}. We will define these recolouring rules in
  Section~\ref{sec:MCS}. Of course, a multitude of other rules can
  easily be created. For our purposes, colour swaps and multi-colour
  swaps will be sufficient, but other generalisations could easily be
  considered. 

  An important aspect of the recolouring rules are that they provide a
  recipe to map from one stopping rule to another, and an important
  aspect that needs to be verified is that the new stopping rule does
  indeed define a randomised multi-stopping time.

We fix $\xi\in \RMST_n^1(\R,\mu_0)$ and $\gamma:\S{n}_\R\to\R.$ As in the previous section, we denote $\xi^i=\xi^{(1,\ldots,i)}=\proj_{\X\times (1,\ldots,i)}(\xi)$. For $(x,f,s_1,\ldots,s_i)\in\S{i}_\R$ we write $(f,s_1,\ldots,s_i)$ and agree on $f(0)=x\in\R.$ For $(f,s_1,\ldots,s_i)\in\S{i}_\R$ and $(h,s)\in S$ we will often write $(f,s_1,\ldots,s_i)|(h,s)$ instead of $(f,s_1,\ldots,s_i)\cat (h,s)\in\S{i+1}_\R$ to stress the probabilistic interpretation of conditioning the continuation of $(f,s_1,\ldots,s_i)$ on $(h,s).$

\subsection{Coloured particles and conditional randomised multi-stopping times.}\label{sec:condRMST}

By Corollary \ref{cor:xii rmst} and Remark \ref{rem:i=0} (for $i=0$), for each $0\leq i\leq n$ the measure $\xi_{(f,s_1,\ldots,s_i)}$ is $r_i(\xi^i)-$a.s.\ a randomised multi-stopping time. For each $0\leq i\leq n-1$ we  fix a disintegration $(\xi_{(f,s_1,\ldots,s_i),\omega})_{(f,s_1,\ldots,s_i),\omega}$  of $\tilde{r}_{n,i}(\xi)$ w.r.t.\ $\tilde r_i(\xi^i)$ and set $\xi_{(f,s_1,\ldots,s_i)}=\xi_{(f,s_1,\ldots,s_i),\omega}\W(d\omega).$
We will need to consider randomised multi-stopping times conditioned on not yet having stopped the particle of colour $i+1$. To this end, observe that
$$\xi^{i+1}_{(f,s_1,\ldots,s_i)}:= \proj_{C(\R_+)\times \Xi^1}(\xi_{(f,s_1,\ldots,s_i)})$$
defines a disintegration of $\tilde r_{i+1,i}(\xi^{i+1})$ wrt $r_i(\xi^i)$. By Definition \ref{def:RMST}, $\xi^{i+1}_{(f,s_1,\ldots,s_i)}\in\RST$ a.s.\ and we set 
$$ A^\xi_{(f,s_1,\ldots,s_{i})}(\omega,t):= A^\xi_{(f,s_1,\ldots,s_{i})}(\omega_{\llcorner[0,t]},t):=(\xi^{i+1}_{(f,s_1,\ldots,s_{i})})_\omega([0,t])$$
which is well defined for $r_i(\xi^i)$-almost every
$(f,s_1,\ldots,s_i)$ by Theorem \ref{thm:equiv RST}.

For $(f,s_1,\ldots,s_i)\in \S{i}_\R$ we define the conditional randomised multi-stopping time given $(h,s)\in S$ to be the (sub) probability measure $\xi_{(f,s_1,\ldots,s_{i})|(h,s)}$  on $C(\R_+)\times \Xi^{n-i}$ given by
\begin{align}\label{eq:cRMST}
&(\xi_{(f,s_1,\ldots,s_{i})|(h,s)})_\omega([0,T_{i+1}]\times\ldots\times[0,T_n])\\
=& \begin{cases}
(\xi_{(f,s_1,\ldots,s_{i})})_{h\oplus\omega}((s,s+T_{i+1}]\times\ldots\times(s,s+T_n]) & \text{ if } A^\xi_{(f,s_1,\ldots,s_i)}(h,s)<1\\
    \Delta A^\xi_{(f,s_1,\ldots,s_i)}(h,s) (\xi_{(f\oplus h,s_1,\ldots,s_{i},s_i+s)})_{\omega}([s,s+T_{i+2}]\times\ldots\times [s,s+T_n]) & \text{ if } A^\xi_{(f,s_1,\ldots,s_i)}(h,s)=1,
   \end{cases} \nonumber
\end{align}
where $\Delta A^\xi_{(f,s_1,\ldots,s_i)}(h,s)= A^\xi_{(f,s_1,\ldots,s_i)}(h,s)- A^\xi_{(f,s_1,\ldots,s_i)}(h,s-)$. The second case in \eqref{eq:cRMST} corresponds to $(\xi^{i+1}_{(f,s_1,\ldots,s_i)})_{h\oplus\omega}$ having an atom at time $s$ eating up `all the remaining (positive) mass' which is of course independent of $\omega$. This causes a $\delta_0$ to appear in \eqref{eq:ncRMST} below. Moreover, in this case  it is possible that also all particles of colour $j\in \{i+2,\ldots,n\}$ are stopped at time $s$ by $(\xi^{i+1}_{(f,s_1,\ldots,s_i)})_{h\oplus\omega}$. This is the reason for the closed intervals in the second line on the right hand side of \eqref{eq:cRMST}. Using Lemma \ref{lem:rmst} resp.\ Corollary \ref{cor:canonicalRMST} it is not hard to see that \eqref{eq:cRMST} indeed defines a randomized multi-stopping times (you simply have to consider the stopping times $\rho^l(\omega,u_1,\ldots,u_l)$ representing  $\xi_{(f,s_1,\ldots,s_i})$ with $u_1> A^\xi_{(f,s_1,\ldots,s_i)}$ for the first case and the second case is immediate).

Accordingly, we define the normalised conditional randomised
multi-stopping times, by
\begin{align}\label{eq:ncRMST}
  \bar\xi_{(f,s_1,\ldots,s_i)|(h,s)}:=\begin{cases}
    \frac1{1-A^\xi_{(f,s_1,\ldots,s_i)}(h,s)} \cdot \xi_{(f,s_1,\ldots,s_i)|(h,s)} & \text{ if } A^\xi_{(f,s_1,\ldots,s_i)}(h,s)<1,\\
    \delta_{0}\xi_{(f\oplus h,s_1,\ldots,s_i,s_i+s)} & \text{ if } A^\xi_{(f,s_1,\ldots,s_i)}(h,s-)<1 \text{ and }A^\xi_{(f,s_1,\ldots,s_i)}(h,s)=1,\\
    \delta_0\cdots\delta_0 & \text{ else.}
  \end{cases}
\end{align}
We emphasize that the construction of $\bar\xi_{(f,s_1,\ldots,s_i)|(h,s)}$ and $\xi_{(f,s_1,\ldots,s_i)|(h,s)}$ only relies on the fixed disintegration of $\tilde r_{n,i}(\xi)$ w.r.t.\ $\tilde r_i(\xi)$. In particular, the map
\begin{equation}\label{eq:ximeas}
  ((f,s_1,\ldots,s_i),(h,s))\mapsto \bar\xi_{(f,s_1,\ldots,s_i)|(h,s)}
\end{equation}
is measurable. 

Recall the connection of Borel sets of $S_\sX$ and optional sets in $\Xs\times\R_+$ given by Proposition \ref{S2F}.

\begin{definition}
 Let $(\sX,m)$ be a Polish probability space. A set $F\subset S_\sX$ is called $m$-evanescent iff $r_\sX^{-1}(F)\subset \Xs\times\R_+$ is evanescent (wrt the probability space $(\Xs,\P)$) iff there exists $A\subset \Xs$ such that $\P(A)=(m\otimes \W)(A)=1$ and $r_\sX(A\times \R_+)\cap F=\emptyset.$
\end{definition}

By Corollary \ref{cor:xii rmst}, $\xi_{(f,s_1,\ldots,s_i)}\in\RMST_{n-i}$ for $r_i(\xi^i)$ a.e.\ $(f,s_1,\ldots,s_i)\in\S{i}$. The next lemma says that for typical $(f,s_1,\ldots,s_i)|(h,s)\in\S{i+1}$ this still holds for $\bar\xi_{(f,s_1,\ldots,s_i)|(h,s)}$.

\begin{lemma}\label{lem:taming}
 Let $\xi\in\RMST_n^1$ and fix $0\leq i <n.$
\begin{enumerate}
 \item $\bar\xi_{(f,s_1,\ldots,s_i)|(h,s)}\in\RMST^1_{n-i}$ outside a $r_i(\xi^i)$-evanescent set.  
\item If $F:\S{n}\to\R$ satisfies $\xi(F\circ r_n)<\infty,$ then the set $\{(f,s_1,\ldots,s_i)|(h,s) : \bar\xi_{(f,s_1,\ldots,s_i)|(h,s)}(F^{(f,s_1,\ldots,s_i)|(h,s)\oplus}\circ r_{n-i})=\infty\}$ is $r_i(\xi^i)$-evanescent. In particular, this applies to $F(f,s_1,\ldots,s_n)=s_n$ if $\xi\in\RMST(\mu_0,\ldots,\mu_n).$ 
\end{enumerate}
\end{lemma}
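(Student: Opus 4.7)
The plan is to handle part (1) case-by-case against the three alternatives in \eqref{eq:ncRMST}, using the stopping-time representation of Corollary \ref{cor:canonicalRMST}. By Corollary \ref{cor:xii rmst} together with Remark \ref{rem:i=0}, on a full-$r_i(\xi^i)$-measure subset of $\S{i}$ we have $\xi_\alpha\in\RMST_{n-i}^1$ for $\alpha=(f,s_1,\ldots,s_i)$; for each such $\alpha$ fix an increasing representation $(\rho^{i+1},\ldots,\rho^n)$ on the enlarged canonical space. In the case $A^\xi_\alpha(h,s)<1$, the conditional law of $(\rho^{i+1}-s,\ldots,\rho^n-s)$ given that the initial segment of the path equals $h$ and $\rho^{i+1}>s$ is, by the strong Markov property, itself representable by an increasing family of stopping times on the time-shifted canonical space, so after normalisation by $1-A^\xi_\alpha(h,s)$ it lands in $\RMST_{n-i}^1$. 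In the atom case $A^\xi_\alpha(h,s-)<1=A^\xi_\alpha(h,s)$, conditioning forces $\{\rho^{i+1}=s\}$, hence the first relative stopping time collapses to $\delta_0$ and the remaining coordinates are represented by $\xi_{(f\oplus h,s_1,\ldots,s_i,s_i+s)}$, which by Corollary \ref{cor:xii rmst} applied at level $i+1$ belongs to $\RMST_{n-i-1}^1$; the third alternative is immediate. The $r_i(\xi^i)$-evanescent exceptional set then collects the $r_i(\xi^i)$-null set from Corollary \ref{cor:xii rmst}, the $\W$-null pathwise set on which the fixed disintegration is not well-defined, and (for the atom case) the $r_{i+1}(\xi^{i+1})$-null set coming from the next level.

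For part (2), I would start from the disintegration
\[
\xi(F\circ r_n)\;=\;\int r_i(\xi^i)(d\alpha)\,\xi_\alpha\!\left(F^{\alpha\cat}\circ r_{n-i}\right),
\]
so that $\xi_\alpha(F^{\alpha\cat}\circ r_{n-i})<\infty$ outside an $r_i(\xi^i)$-null set of $\alpha$. Fix such an $\alpha$ and let $(\rho^{i+1},\ldots,\rho^n)$ be its stopping-time representation on the canonical enlargement. The nonnegative process
\[
M_t\;:=\;\bar\E\!\left[\,F^{\alpha\cat}\!\left(\omega,\rho^{i+1},\ldots,\rho^n\right)\,\big|\,\bar\cG_t\,\right]
\]
is a martingale with finite expectation and hence admits a \cadlag\ modification which is finite off a $\bar\P$-evanescent set. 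A short computation using \eqref{eq:cRMST}--\eqref{eq:ncRMST} identifies $M_t$ on $\{t<\rho^{i+1}\}$ with
$(1-A^\xi_\alpha(\omega,t))\cdot \bar\xi_{\alpha\mid(\omega_{\llcorner[0,t]},t)}(F^{\alpha\mid(\omega_{\llcorner[0,t]},t)\oplus}\circ r_{n-i})$; the atom event is handled by invoking part (1) and the same Fubini step once more at level $i+1$. Thus finiteness of $M_t$ for every $t\geq 0$ along $\bar\P$-typical paths is precisely the required $r_i(\xi^i)$-evanescence of the bad set. The specialisation $F(f,s_1,\ldots,s_n)=s_n$ for $\xi\in\RMST(\mu_0,\ldots,\mu_n)$ is then an immediate consequence of Lemma \ref{lem:ui}.

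The main obstacle I anticipate is the passage from ``$\tilde r_{i+1,i}(\xi^{i+1})$-a.e.\ finite'', which drops out of Fubini applied to the tower above, to ``finite outside an $r_i(\xi^i)$-evanescent set''. Evanescence demands finiteness at \emph{every} $t\geq 0$ along a typical Brownian trajectory, not merely at the optional time $\rho^{i+1}$, and it is exactly right-continuity of the martingale modification of $M$ that supplies this upgrade. The atom case of \eqref{eq:ncRMST} brings an additional layer of bookkeeping because the conditional measure is reassembled from the next level of the disintegration, so the argument must be iterated once at level $i+1$ before the conclusion closes.
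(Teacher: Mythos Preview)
There is a genuine gap, and it recurs in both parts. For part~(2), with the filtration $\bar\cG_t$ you choose (which contains the randomisation coordinate $u$), the martingale $M_t(\omega,u)$ depends on $u$ while your claimed value $(1-A^\xi_\alpha(\omega,t))\,\bar\xi_{\alpha\mid r(\omega,t)}(F^{\alpha\mid r(\omega,t)\oplus}\circ r_{n-i})$ does not; in fact the latter equals $\int_{\{u_{i+1}>A^\xi_\alpha(r(\omega,t))\}} M_t(\omega,u)\,du$, so pointwise finiteness of $M_t$ along $\bar\P$-typical $(\omega,u)$ says nothing about this integral. Passing to the Brownian filtration alone produces a process $\tilde M_t(\omega)$ dominating $\xi_{\alpha\mid r(\omega,t)}(F^{\cdots})$ pointwise for the \emph{specific} representative of Definition~\ref{EAverage}, but the \cadlag\ modification is only a modification of this, and right-continuity of the modification does not by itself upgrade the specific version from ``finite at each fixed $t$ a.s.'' to ``finite for all $t$ simultaneously a.s.''. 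Part~(1) has the same defect: appealing to ``the $\W$-null pathwise set on which the fixed disintegration is not well-defined'' does not yield $r_i(\xi^i)$-evanescence in $(h,s)$, since for every $(h,s)$ with $s>0$ the fibre $\{h\oplus\omega':\omega'\in\CRo\}$ is already $\W$-null.

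The paper closes both gaps directly with the optional section theorem (cf.\ Remark~\ref{rem:section}): a Borel set $U\subset S_\sX$ is $m$-evanescent iff $\P[((Y_s)_{s\le\tau},\tau)\in U]=0$ for every $\cG^0$-stopping time $\tau$. For $U_2$, strong Markov at an arbitrary such $\tau$ bounds the integral of $(1-A^\xi_x)\cdot\bar\xi_{x\mid(h,s)}(F^{x\mid(h,s)\oplus})$ over the stopped-path law by $\xi(F\circ r_n)<\infty$, and since this integrand is $+\infty$ on $U_2$ the stopped-path law must give $U_2$ measure zero; the identity $\int A^\xi_x(\omega_{\llcorner[0,\tau]}\oplus\tilde\omega)\,\W(d\tilde\omega)=1$ a.s.\ handles $U_1$ analogously. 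Your martingale approach can in principle be completed, but only by showing $\tilde M$ is indistinguishable from its \cadlag\ modification, and that step itself requires optional section---at which point the paper's direct argument is shorter.
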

\begin{remark}\label{rem:taming}
 Observe that a direct consequence of Corollary~\ref{cor:xii rmst}, assuming $\xi(F\circ r_n)<\infty$, is that $\{(f,s_1,\ldots,s_i) : \xi_{(f,s_1,\ldots,s_i)}({F}^{(f,s_1,\ldots,s_i)\otimes}\circ r_{n-i})=\infty\}$ is a $r_i(\xi^i)$ null set. 
\end{remark}

\begin{proof}[Proof of Lemma \ref{lem:taming}]
It is apparent that $\xi_{(f,s_1\ldots,s_i)|(h,s)}\in\RMST$.
 By Corollary \ref{cor:xii rmst}, \eqref{eq:ncRMST}, and Remark \ref{rem:taming} it is sufficient to show the claims under the additional hypothesis that $A^\xi_{(f,s_1\ldots,s_i)}(h,s)<1$. Hence, consider
\begin{align*}
 U_1&=\{(f,s_1,\ldots,s_i)|(h,s) : A^\xi_{(f,s_1\ldots,s_i)}(h,s)<1, \bar\xi_{(f,s_1,\ldots,s_i)|(h,s)}\notin\RMST^1_{n-i}\},\\
 U_2&=\{(f,s_1,\ldots,s_i)|(h,s) : A^\xi_{(f,s_1\ldots,s_i)}(h,s)<1, \bar\xi_{(f,s_1,\ldots,s_i)|(h,s)}(F^{(f,s_1,\ldots,s_i)|(h,s)\oplus}\circ r_{n-i})=\infty\}.
\end{align*}
Set $A^\xi_{(f,s_1,\ldots,s_i)}(\omega):=\lim_{s\to\infty} A^\xi_{(f,s_1,\ldots,s_i)}(r(\omega,s))$. Then, $(f,s_1,\ldots,s_i)|(h,s)\in U_1$ is equivalent to $\textstyle \int A^\xi_{(f,s_1,\ldots,s_i)}(h\oplus\omega)~\W(d\omega)<1.$ Set $X=\S{i}$ and $m=r_i(\xi^i)$ and recall that the natural coordinate process on $\Xs$ is denoted by $Y$. Given a $\cG^0$-stopping time $\tau$ on $(\Xs,\cG,\P)$ we have $r_i(\xi^i)$ a.s.~ by the strong Markov property and the fact that $\xi$ is almost surely a finite stopping time: \begin{align*}
 1= & \int A^\xi_{(f,s_1,\ldots,s_i)}(\omega)~\W(d\omega) \\
= & \int \left[\1_{\tau(\omega)=\infty} A^\xi_{(f,s_1,\ldots,s_i)}(\omega)  + \int \1_{\tau(\omega)<\infty} A^\xi_{(f,s_1,\ldots,s_i)}(\omega_{\llcorner [0,\tau]}\oplus\tilde\omega) ~\W(\tilde\omega)\right] ~\W(d\omega),
\end{align*}
implying that $\P[((Y_s)_{s\leq\tau},\tau)\in U_1]=0.$ Hence, the first part follows from the optional section Theorem.

Additionally, setting $\alpha(d(x,\omega),dt)=\delta_{\tau(x,\omega)}(dt) \, \P(d(x,\omega))$ we have
\begin{align*}
 \int_{U_2} dr_\sX(\alpha)(x,h,s)~(1-A^\xi_x(h,s))  ~\bar\xi_{x|(h,s)}(F^{x|(h,s)\oplus})\leq \xi(F)<\infty,
\end{align*}
implying $r_\sX(\alpha)(U_2)=0$. Hence, we have $\P[((Y_s)_{s\leq\tau},\tau)\in U_2]=0$ proving the claim by the optional section theorem, e.g.\ \cite[Theorems IV 84 and IV 85]{DeMeA} (see also Remark \ref{rem:section}).
\end{proof}

\subsection{Colour swaps}
\label{sec:CS}
As a first step towards the definition of stop-go pairs we introduce
an important building block, the colour swap pairs. 
 
By Corollary \ref{cor:canonicalRMST} and Corollary \ref{cor:xii rmst}, for $r_i(\xi^i)$ a.e.\ $(g,t_1,\ldots,t_{i})$ there is an increasing sequence $(\rho^j_{(g,t_1,\ldots,t_i)})_{j=i+1}^n$ of $\bar\cF^a$-stopping times such that
\begin{align*}
 \omega\mapsto \int_{[0,1]^{n-i}} \delta_{\rho_{(g,t_1,\ldots,t_{i})}^{i+1}(\omega,u)}(dt_{i+1})\cdots \delta_{\rho_{(g,t_1,\ldots,t_{i})}^n(\omega,u)}(dt_n)\ du
\end{align*}
defines an $\cF^a$- measurable disintegration of $\xi_{(g,t_1,\ldots,t_{i})}$ w.r.t.\ $\W_{\mu_0}$. Similarly, by Lemma \ref{lem:taming}, outside an $r_{i-1}(\xi^{i-1})$ evanescent set, for $(f,s_1,\ldots,s_{i-1})|(h,s) \in \S{i}_\R$ such that $\bar\xi_{(f,s_1,\ldots,s_{i-1})|(h,s)}\neq \delta_0 \cdots \delta_0$  there is an increasing sequence $(\rho^j_{(f,s_1,\ldots,s_{i-1})|(h,s)})_{j=i}^n$ of $\bar\cF^a$-stopping times such that
\begin{align*}
 \omega\mapsto \int_{[0,1]^{n-i+1}} \delta_{\rho_{(f,s_1,\ldots,s_{i-1})|(h,s)}^{i}(\omega,u)}(ds_{i})\cdots \delta_{\rho_{(f,s_1,\ldots,s_{i-1})|(h,s)}^n(\omega,u)}(ds_n)\ du
\end{align*}
defines an $\F^a$- measurable disintegration of $\bar\xi_{(f,s_1,\ldots,s_{i-1})|(h,s)}$ w.r.t.\ $\W_{\mu_0}$. We make the important observation that, if $A^\xi_{(f,s_1,\ldots,s_{i-1})}(h,s)=1$ (hence in this situation $\Delta A^\xi_{(f,s_1,\ldots,s_{i-1})}(h,s)>0$), we have $\rho_{(f,s_1,\ldots,s_{i-1})|(h,s)}^i\equiv \delta_{0}$. 

This representation allows us to couple the two stopping rules by taking realizations of the $\rho^j_{(g,t_1,\ldots,t_i)}$ stopping times and $\rho_{(f,s_1,\ldots,s_i)|(h,s)}^k$ stopping times on the \emph{same probability space} $\bar\Omega^{f\otimes h,g}:=C(R_+)\times [0,1]^{n-i+1}$ where of course one of the $u$-coordinates is superfluous for the $\rho^j_{(g,t_1,\ldots,t_i)}$ stopping times.  For $(f,s_1,\ldots,s_{i-1}),(h,s)$ and $(g,t_1,\ldots,t_i)$ as above and $n>j\geq i$ we define
\begin{align}\label{eq:Afhg}
\begin{split}
\Lambda_j^{f\otimes h,g}:=\Big\{ (\omega,u)\in \bar\Omega^{f\otimes h,g}~:\  & 
 \rho^j_{(f,s_1,\ldots,s_{i-1})|(h,s)}(\omega,u) \vee \rho^{j}_{(g,t_1,\ldots,t_i)}(\omega,u) \\
 & \le \rho^{j+1}_{(f,s_1,\ldots,s_{i-1})|(h,s)}(\omega,u) \wedge \rho^{j+1}_{(g,t_1,\ldots,t_i)}(\omega,u)\Big\}.
\end{split}
\end{align}
Note that this is the set where it is possible to swap the stopping rules from colour $i$ up to colour $j$ and \emph{not swap} the stopping rule for colours greater than $j$.

The set of colour swap pairs between colour $i$ and $j$, $i \le j < n$, denoted by $\CS^\xi_{i\leftrightarrow j}$ is defined to consist of all $(f,s_1,\ldots,s_{i-1})\in \S{i-1}_\R$, $(h,s)\in S$ and $(g,t_1,\ldots,t_i)\in  \S{i}_\R$ such that $f\oplus h(s_{i-1}+s)=g(t_i)$, $1-A^\xi_{(f,s_1,\ldots,s_{i-1})}(h,s)+\Delta A^\xi_{(f,s_1,\ldots,s_{i-1})}(h,s)>0$, and
\begin{align*}
& \int \gamma^{(f,s_1,\ldots,s_{i-1})|(h,s)\oplus}(\omega,s_i,\ldots,s_n)~\bar\xi_{(f,s_1,\ldots,s_{i-1})|(h,s)}(d\omega,ds_i,\ldots,ds_n)\\
& + \int \gamma^{(g,t_1,\ldots,t_i)\otimes}(\omega,t_{i+1},\ldots,t_{n})~\xi_{(g,t_1,\ldots,t_i)}(d\omega,dt_{i+1},\ldots,dt_n)\\
>& \int \W(d\omega)du  \1_{\Lambda_j^{f\otimes h,g}}(\omega,u) \left[ \int \gamma^{(f,s_1,\ldots,s_{i-1})|(h,s)\otimes}(\omega,t_{i+1},\ldots,t_j,s_{j+1},\ldots,s_n)\right. \\
& \delta_{\rho^{i+1}_{(g,t_1,\ldots,t_i)}(\omega,u)}(dt_{i+1})\cdots \delta_{\rho^{j}_{(g,t_1,\ldots,t_i)}(\omega,u)}(dt_{j})~ \delta_{\rho^{j+1}_{(f,s_1,\ldots,s_{i-1})|(h,s)}(\omega,u)}(ds_{j+1})\cdots \delta_{\rho^{n}_{(f,s_1,\ldots,s_{i-1})|(h,s)}(\omega,u)}(ds_{n})\\
&+ \int \gamma^{(g,t_1,\ldots,t_i)\oplus}(\omega,s_i,\ldots,s_j,t_{j+1},\ldots,t_n)\\
&\delta_{\rho^{i}_{(f,s_1,\ldots,s_{i-1})|(h,s)}(\omega,u)}(ds_{i})\cdots \delta_{\rho^{j}_{(f,s_1,\ldots,s_{i-1})|(h,s)}(\omega,u)}(ds_{j})~\delta_{\rho^{j+1}_{(g,t_1,\ldots,t_i)}(\omega,u)}(dt_{j+1})\cdots \delta_{\rho^{n}_{(g,t_1,\ldots,t_i)}(\omega,u)}(dt_{n}) \Bigg]\\
&+ \int \W(d\omega)du \left(1-\1_{\Lambda_j^{f\otimes h,g}}(\omega,u) \right) \left[ \int  \gamma^{(f,s_1,\ldots,s_{i-1})|(h,s)\otimes}(\omega,t_{i+1},\ldots,t_n) \right.  \\
& \qquad \delta_{\rho^{i+1}_{(g,t_1,\ldots,t_i)}(\omega,u)}(dt_{i+1})\cdots \delta_{\rho^{n}_{(g,t_1,\ldots,t_i)}(\omega,u)}(dt_{n})   \\
&+ \int \gamma^{(g,t_1,\ldots,t_i)\oplus}(\omega,s_i,\ldots,s_n)~\delta_{\rho^{i}_{(f,s_1,\ldots,s_{i-1})|(h,s)}(\omega,u)}(ds_{i})\cdots \delta_{\rho^{n}_{(f,s_1,\ldots,s_{i-1})|(h,s)}(\omega,u)}(ds_{n}) \Bigg]. \numberthis \label{eq:CS}
\end{align*} 
Moreover, we agree that \eqref{eq:CS} holds in each of the following cases
\begin{enumerate}
 \item $\bar\xi_{(f,s_1,\ldots,s_{i-1})|(h,s)} \notin\RMST^1_{n-i+1}, \xi_{(g,t_1,\ldots,t_i)}\notin\RMST^1_{n-i}$;
\item the left hand side is infinite;
\item any of the integrals appearing is not well-defined.
\end{enumerate}

Then we set $\CS_{i}^\xi = \bigcup_{j \ge i} \CS_{i\leftrightarrow j}^\xi$.

\begin{remark}\label{rem:cs}
 \begin{enumerate}
  \item In case that $\Lambda_j^{f\oplus h,g} \neq \bar\Omega^{f\otimes h,g}$ it is not sufficient to only change the colours/stopping rules from colour $i$ to $j$. On the complement of $\Lambda^{f\oplus h,g}$, we have to switch the whole stopping rule from colour $i$ up to colour $n$ in order to stay within the class of randomised multi-stopping times. This is precisely the reason for the two big integrals appearing on the right hand side of the inequality.
\item Recall that $\rho^i_{(f,s_1,\ldots,s_{i-1})|(h,s)}=\delta_0$ is possible so that it might happen that on both sides of \eqref{eq:CS} the stopping rule of colour $i$ is in fact \emph{the same} and we only change the stopping rule from colour $i+1$ onwards.
\item In case of $\CS^\xi_{i\leftrightarrow i}$ the condition $1-A^\xi_{(f,s_1,\ldots,s_{i-1})}(h,s)+\Delta A^\xi_{(f,s_1,\ldots,s_{i-1})}(h,s)>0$ is not needed since there is no colour swap pair (with finite well defined integrals) not satisfying this condition.
 \end{enumerate}
\end{remark}

\subsection{Multi-colour swaps}
\label{sec:MCS}
Having introduced colour swap pairs we can now proceed and combine
different colour swaps into multi-colour swap pairs.  As described
above, the aim is now to swap back \emph{as soon as possible}.
To this end, we consider for fixed $i<n$ the
following partition of $\bar\Omega^{f\otimes h,g}$ defined in such a
way that modifications of stopping rules in accordance to this
partition transform randomised multi-stopping times into randomised
multi-stopping times (c.f.~\eqref{eq:tildetau}).

\begin{align}\label{eq:partition}
 \bar\Omega^{f\otimes h,g}=\bigcup_{j=i}^{n}\left(\Lambda_j^{f\otimes h,g}\setminus\cup_{k=i}^{j-1}\Lambda_k^{f\otimes h,g}\right),
\end{align}
where
 $$\Lambda_n^{f\otimes h,g}:=\left\{\rho_{(g,t_1,\ldots,t_i)}^{i+1}<\rho^i_{(f,s_1,\ldots,s_{i-1})|(h,s)}, \rho_{(g,t_1,\ldots,t_i)}^n < \rho^n_{(f,s_1,\ldots,s_{i-1})|(h,s)}\right\}.$$
This is indeed a partition: The different sets are disjoint by construction. Hence, the right hand side of \eqref{eq:partition} is contained in the left hand side. We need to show that also the converse conclusion holds. Take $(\omega,u)\in\bar\Omega^{f\otimes h,g}$. If 
$$ \rho^{i+1}_{(g,t_1,\ldots,t_i)}(\omega,u)\geq \rho^{i}_{(f,s_1,\ldots,s_{i-1})|(h,s)}(\omega,u),$$
then $(\omega,u)\in \Lambda^{f\oplus h,g}_i$. Otherwise, it holds that
$$ \rho^{i+1}_{(g,t_1,\ldots,t_i)}(\omega,u) < \rho^{i}_{(f,s_1,\ldots,s_{i-1})|(h,s)}(\omega,u)\leq \rho^{i+2}_{(f,s_1,\ldots,s_{i-1})|(h,s)}(\omega,u)$$ 
and either
$$ \rho^{i+2}_{(g,t_1,\ldots,t_i)}(\omega,u)\geq \rho^{i+1}_{(f,s_1,\ldots,s_{i-1})|(h,s)}(\omega,u) $$
or
$$\rho^{i+2}_{(g,t_1,\ldots,t_i)}(\omega,u)< \rho^{i+1}_{(f,s_1,\ldots,s_{i-1})|(h,s)}(\omega,u) \leq \rho^{i+3}_{(f,s_1,\ldots,s_{i-1})|(h,s)}(\omega,u) .$$
In the former case, we have $(\omega,u)\in \Lambda^{f\oplus h,g}_{i+1}\setminus \Lambda^{f\oplus h,g}_i$ and in the latter case we have either
$$ \rho^{i+3}_{(g,t_1,\ldots,t_i)}(\omega,u)\geq \rho^{i+2}_{(f,s_1,\ldots,s_{i-1})|(h,s)}(\omega,u) $$
or
$$\rho^{i+3}_{(g,t_1,\ldots,t_i)}(\omega,u)< \rho^{i+2}_{(f,s_1,\ldots,s_{i-1})|(h,s)}(\omega,u) \leq \rho^{i+4}_{(f,s_1,\ldots,s_{i-1})|(h,s)}(\omega,u) .$$
By induction, the claim follows.  We put $\bar \Lambda_j^{f\otimes h,g}=\Lambda_j^{f\otimes h,g}\setminus\cup_{k=i}^{j-1}\Lambda_k^{f\otimes h,g}.$ Then, the set of all \emph{multi-colour swap pairs starting at colour $i$}, denoted by $\MCS^\xi_i$, is defined to consist of all $(f,s_1,\ldots,s_{i-1}) \in \S{i-1}_\R,(h,s)\in S, (g,t_1,\ldots,t_i)\in \S{i}_\R$ such that $f\oplus h(s_{i-1}+s)=g(t_i)$ and
\begin{align*}
& \int \gamma^{(f,s_1,\ldots,s_{i-1})|(h,s)\oplus}(\omega,s_i,\ldots,s_n)~\bar\xi_{(f,s_1,\ldots,s_{i-1})|(h,s)}(d\omega,ds_i,\ldots,ds_n)\\
& + \int \gamma^{(g,t_1,\ldots,t_i)\otimes}(\omega,t_{i+1},\ldots,t_{n})~\xi_{(g,t_1,\ldots,t_i)}(d\omega,dt_{i+1},\ldots,dt_n)\\
>& \int \W(d\omega)du \sum_{j=i}^{n-1} \left[ \1_{\bar \Lambda_j^{f\otimes h,g}}(\omega,u) \right.  \int \gamma^{(f,s_1,\ldots,s_{i-1})|(h,s)\otimes}(\omega,t_{i+1},\ldots,t_j,s_{j+1},\ldots,s_n) \\
& \delta_{\rho^{i+1}_{(g,t_1,\ldots,t_i)}(\omega,u)}(dt_{i+1})\cdots \delta_{\rho^{j}_{(g,t_1,\ldots,t_i)}(\omega,u)}(dt_{j})~ \delta_{\rho^{j+1}_{(f,s_1,\ldots,s_{i-1})|(h,s)}(\omega,u)}(ds_{j+1})\cdots \delta_{\rho^{n}_{(f,s_1,\ldots,s_{i-1})|(h,s)}(\omega,u)}(ds_{n})\\
&+ \int \gamma^{(g,t_1,\ldots,t_i)\oplus}(\omega,s_i,\ldots,s_j,t_{j+1},\ldots,t_n)\\
&\delta_{\rho^{i}_{(f,s_1,\ldots,s_{i-1})|(h,s)}(\omega,u)}(ds_{i})\cdots \delta_{\rho^{j}_{(f,s_1,\ldots,s_{i-1})|(h,s)}(\omega,u)}(ds_{j})~\delta_{\rho^{j+1}_{(g,t_1,\ldots,t_i)}(\omega,u)}(dt_{j+1})\cdots \delta_{\rho^{n}_{(g,t_1,\ldots,t_i)}(\omega,u)}(dt_{n}) \Bigg]\\
&+ \int \W(d\omega)du \left(1-\sum_{j=i}^{n-1}\1_{\bar \Lambda_j^{f\otimes h,g}}(\omega,u) \right) \left[ \int  \gamma^{(f,s_1,\ldots,s_{i-1})|(h,s)\otimes}(\omega,t_{i+1},\ldots,t_n) \right.  \\
& \qquad \delta_{\rho^{i+1}_{(g,t_1,\ldots,t_i)}(\omega,u)}(dt_{i+1})\cdots \delta_{\rho^{n}_{(g,t_1,\ldots,t_i)}(\omega,u)}(dt_{n})   \\
& + \int \gamma^{(g,t_1,\ldots,t_i)\oplus}(\omega,s_i,\ldots,s_n)~\delta_{\rho^{i}_{(f,s_1,\ldots,s_{i-1})|(h,s)}(\omega,u)}(ds_{i})\cdots \delta_{\rho^{n}_{(f,s_1,\ldots,s_{i-1})|(h,s)}(\omega,u)}(ds_{n}) \Bigg].\numberthis \label{eq:MCS}
\end{align*}
As in the case of colour swaps we agree that \eqref{eq:MCS} holds in each of the following cases
\begin{enumerate}
 \item $\bar\xi_{(f,s_1,\ldots,s_{i-1})|(h,s)} \notin\RMST^1_{n-i+1}, \xi_{(g,t_1,\ldots,t_i)}\notin\RMST^1_{n-i}$;
\item the left hand side is infinite;
\item any of the integrals appearing is not well-defined.
\end{enumerate}

\begin{remark}
\begin{enumerate}
 \item Note that when $\rho^i_{(f,s_1,\ldots,s_{i-1})|(h,s)}\equiv\delta_0$ we have $\bar\Omega^{f\otimes h,g}=\Lambda_i^{f\otimes h,g}$. Inserting this case into \eqref{eq:MCS} we see that both sides agree so that there are no multi-colour swap pairs satisfying $\rho^i_{(f,s_1,\ldots,s_{i-1})|(h,s)}\equiv\delta_0$.
\item Observe that in the definition of $\MCS^\xi_i$ we do not need to impose the condition $1-A^\xi_{(f,s_1,\ldots,s_{i-1})}(h,s)+\Delta A^\xi_{(f,s_1,\ldots,s_{i-1})}(h,s)>0$ by Remark  \ref{rem:cs}.
\end{enumerate}

\end{remark}

\subsection{Stop-go pairs}\label{sec:SGPairs}
Finally, we combine the previous two notions.

\begin{definition}
  \label{def:SG}
  Let $\xi \in \RMST_n^1(\R,\mu_0)$. The set of stop-go pairs of colour $i$ relative to $\xi$, $\SG^\xi_i$, is defined by $\SG^\xi_i = \CS_i^\xi \cup \MCS_i^\xi$. We define the stop-go pairs of colour $i$ in the wide sense by $\widehat{\SG}^\xi_i=\SG^\xi_i\cup \{(f,s_1,\ldots,s_{i-1})|(h,s)\in \S{i}_\R:A^\xi_{(f,s_1,\ldots,s_{i-1})}(h,s)=1\}\times\S{i}_\R.$

 The set of stop-go pairs relative to $\xi$ is defined by $\SG^\xi := \bigcup_{1\leq i\leq n} \SG_i^\xi$. The stop-go pairs in the wide sense are $\widehat{\SG}^\xi:= \bigcup_{1\leq i\leq n} \widehat{\SG}_i^\xi$.
\end{definition}

\begin{lemma}
 Every stop-go pair is a stop-go pair in the wide sense, i.e.\  $\SG_i\subset \widehat{\SG}_i^\xi$ for any $1\leq i\leq n$.
\end{lemma}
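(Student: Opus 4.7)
The proof I envisage is essentially a direct unpacking of Definition~\ref{def:SG}. Recall that the wide-sense set is defined as
\[
 \widehat{\SG}_i^\xi = \SG_i^\xi \cup \Big(\{(f,s_1,\ldots,s_{i-1})|(h,s)\in \S{i}_\R : A^\xi_{(f,s_1,\ldots,s_{i-1})}(h,s)=1\}\times \S{i}_\R\Big).
\]
Since any set is trivially contained in any union in which it appears as a summand, the inclusion $\SG_i^\xi \subset \widehat{\SG}_i^\xi$ is a tautology. I therefore expect the entire content of the proof to be a two-line observation, and I would phrase it exactly that way.

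The one technical point worth flagging is a bookkeeping identification of ambient spaces. The constituents $\CS_i^\xi$ and $\MCS_i^\xi$ of $\SG_i^\xi$ were introduced via triples of the form $((f,s_1,\ldots,s_{i-1}),(h,s),(g,t_1,\ldots,t_i))$, whereas $\widehat{\SG}_i^\xi$ lives in $\S{i}_\R \times \S{i}_\R$. The plan is to make explicit the canonical identification
\[
 ((f,s_1,\ldots,s_{i-1}),(h,s)) \;\longleftrightarrow\; (f,s_1,\ldots,s_{i-1})|(h,s) = (f\oplus h, s_1, \ldots, s_{i-1}, s_{i-1}+s) \in \S{i}_\R,
\]
under which every element of $\SG_i^\xi$ becomes literally a pair in $\S{i}_\R\times\S{i}_\R$. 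Once this is in place, the inclusion is immediate and there is genuinely no further work.

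I do not anticipate any substantive obstacle; the lemma serves mainly to fix terminology for subsequent arguments. If, however, the intended reading were instead that $\SG_i$ refers to the external notion from Section~\ref{sec:probint} (defined via universal quantification over $\cF^B$-stopping times $(\sigma_j),(\tau_j)$), then the argument would instead proceed by instantiating those arbitrary stopping times with the specific ones produced by Corollary~\ref{cor:canonicalRMST} applied to the conditional kernels $\bar\xi_{(f,s_1,\ldots,s_{i-1})|(h,s)}$ and $\xi_{(g,t_1,\ldots,t_i)}$. The case split on whether $A^\xi_{(f,s_1,\ldots,s_{i-1})}(h,s)=1$ would dispatch the atom-at-$s$ scenario into the extra set of $\widehat{\SG}_i^\xi$, while in the complementary case Lemma~\ref{lem:taming} ensures that the relevant integrals are finite and that $\bar\xi_{(f,\ldots)|(h,s)}$ is itself a randomised multi-stopping time, so that the $\SG_i$-inequality \eqref{eq:SG} specialises exactly to the $\MCS_i^\xi$-inequality \eqref{eq:MCS} (the construction \eqref{eq:tildetau} of $\tilde\sigma_j,\tilde\tau_j$ is precisely the ``revert at first opportunity'' rule encoded by the partition $\bar\Lambda_j^{f\otimes h,g}$). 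Either way, no new analytic ingredient beyond what is already available is needed.
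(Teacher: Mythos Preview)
Your first reading is the wrong one: the symbol $\SG_i$ in the lemma has no superscript $\xi$ and refers to the \emph{universal} stop-go pairs introduced in Section~\ref{sec:probint} (inequality \eqref{eq:SG} required for \emph{all} admissible families $(\sigma_j),(\tau_j)$), not to $\SG_i^\xi=\CS_i^\xi\cup\MCS_i^\xi$. So the inclusion is not a tautology, and the ``two-line observation'' you propose first does not prove the statement.

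Your fallback paragraph is the correct interpretation and essentially the correct argument. The paper itself gives nothing more than a pointer to \cite[Lemma~5.4]{BeCoHu14}, whose mechanism is exactly what you outline: one instantiates the universally quantified $(\sigma_j)_{j\ge i}$ and $(\tau_j)_{j\ge i}$ with the stopping-time representations of $\bar\xi_{(f,s_1,\ldots,s_{i-1})|(h,s)}$ and $\xi_{(g,t_1,\ldots,t_i)}$ produced by Corollary~\ref{cor:canonicalRMST}, and observes that the recursive definition \eqref{eq:tildetau} of $(\tilde\sigma_j,\tilde\tau_j)$ coincides with the ``swap back at the first opportunity'' rule encoded by the partition $(\bar\Lambda_j^{f\otimes h,g})_j$, so that \eqref{eq:SG} becomes precisely \eqref{eq:MCS}. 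The case $A^\xi_{(f,s_1,\ldots,s_{i-1})}(h,s)=1$ (where $\rho^i\equiv 0$ and the required positivity $\E[\sigma_i]>0$ fails) is absorbed into the extra set in the definition of $\widehat{\SG}_i^\xi$; any remaining pathologies (ill-defined or infinite integrals, failure of $\bar\xi_{(f,\ldots)|(h,s)}\in\RMST^1$) are covered by the blanket conventions after \eqref{eq:MCS}. So your second sketch matches the paper's intended proof, and in fact spells out more detail than the paper does --- just promote it from afterthought to main argument and drop the tautological first part.
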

\begin{proof}
By loading notation, this follows using exactly the same argument as for the proof of \cite[Lemma 5.4]{BeCoHu14}.
\end{proof}

\begin{remark}
  As in \cite{BeCoHu14}, we observe that the sets $\SG^\xi$ and $\widehat{\SG}^\xi$
  are both Borel subsets of $\S{i} \times \S{i}$, since the maps given
  in e.g.\ \eqref{eq:ximeas} are measurable. In contrast, the set $\SG$ is in general just co-analytic.
\end{remark}

\section{The monotonicity principle}\label{sec:monotonicity}

The aim of this section is to prove the main results, Theorem~\ref{thm:monprin2} and the closely related Theorem~\ref{thm:monotonicity principle}. The structure of this section follows closely the structure of the proof of the corresponding results, Theorem~5.7 (resp. Theorem~5.16), in \cite{BeCoHu14}. For the benefit of the reader, and to keep our presentation compact, we concentrate on those aspects of the proof where additional insight is needed to account for the multi-marginal aspects of the problem. We refer the reader to \cite{BeCoHu14} for other details.

The essence of the proof is to first show that if we have a candidate optimiser $\xi$, and a joining rule $\pi$ which identifies stop-go pairs, we can construct an infinitesimal improvement $\xi^\pi$, which will also be a candidate solution, but which will improve the objective. It will follow that the joining $\pi$ will place no mass on the set of stop-go pairs. The second part of the proof shows that we can strengthen this to give a pointwise result, where we can exclude any stop-go pair from a set related to the support of the optimiser. 

\textbf{Important convention:} Throughout this section, we fix a function $\gamma:\S{n}\to\R$ and a measure $\xi\in\RMST^1(\mu_0,\mu_1,\ldots,\mu_n)$. Moreover, for each $0\leq i\leq n-1$ we fix a disintegration $(\xi_{(f,s_1,\ldots,s_i),\omega})_{(f,s_1,\ldots,s_i),\omega}$  of $\tilde{r}_{n,i}(\xi)$ w.r.t.\ $\tilde r_i(\xi^i)$ and set $\xi_{(f,s_1,\ldots,s_i)}=\xi_{(f,s_1,\ldots,s_i),\omega}\W(d\omega).$

Recall the map $\proj_{\S{i}}$ from Section \ref{sec:probint}.

\begin{definition}
 A family of Borel
 sets $\Gamma=(\Gamma_1,\ldots,\Gamma_n)$ with $\Gamma_i\subset \S{i}_\R$ is called $(\gamma,\xi)$-monotone iff for all $1\leq i \leq n$
$$ \widehat{\SG}^{\xi}_i\cap\left(\Gamma_i^<\times\Gamma_i\right)=\emptyset,$$
where
$$\Gamma_i^<=\{(f,s_1,\ldots,s_{i-1},s): \text{ there exists } (g,s_1,\ldots,s_{i-1},t)\in\Gamma_i, s_{i-1}\leq s <t, g_{\llcorner [0,s]}=f\},$$
and $\proj_{\S{i-1}}(\Gamma_i)\subset\Gamma_{i-1}$.
\end{definition}

\begin{theorem}\label{thm:monotonicity principle}
Assume that $\gamma:\S{n}\to\R$ is Borel measurable. Assume that \eqref{eq:Pgamma} is well posed and that $\xi\in\RMST(\mu_0,\ldots,\mu_1)$ is an optimizer. Then there exists a  $(\gamma,\xi)$-monotone family of Borel sets $\Gamma=(\Gamma_1,\ldots,\Gamma_n)$ such that $r_i(\xi)(\Gamma_i)=1$ for each $1\leq i\leq n$.
\end{theorem}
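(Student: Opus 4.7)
The plan is to adapt the two-step strategy used for the analogous one-marginal statement \cite[Theorems 5.7 and 5.16]{BeCoHu14}: first prove an ``integrated'' monotonicity result, then upgrade it to the pointwise statement via optional section. The setup developed in Sections~\ref{sec:CSMCSSGPairs} and~\ref{sec:Si} (in particular conditional RMSTs $\bar\xi_{(f,s_1,\ldots,s_{i-1})|(h,s)}$ and $\xi_{(g,t_1,\ldots,t_i)}$, the stop-go sets $\widehat\SG_i^\xi$, and the joinings of Definition~\ref{def:joinings}) is precisely what makes this adaptation possible, and I would use each Borel colour $i\in\{1,\ldots,n\}$ separately before reconciling compatibility.

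\textbf{Step 1 (Integrated version).} For fixed $i$, I would consider joinings $\pi\in\TRST\bigl(\tilde r_{i-1}(\xi^{i-1}),\,r_i(\xi^i)\bigr)$. Such a $\pi$ couples a stopped path $(f,s_1,\ldots,s_{i-1})|(h,s)\in\S{i}_\R$ (produced by its $\RST$-component) with a competitor $(g,t_1,\ldots,t_i)\in\S{i}_\R$ satisfying $f\oplus h(s_{i-1}+s)=g(t_i)$. Given $\pi$, the candidate improvement $\xi^\pi$ is obtained by replacing, on the $\pi$-portion, the conditional continuation rule $\bar\xi_{(f,s_1,\ldots,s_{i-1})|(h,s)}$ by a rule constructed from $\xi_{(g,t_1,\ldots,t_i)}$ following the prescriptions in \eqref{eq:CS} and \eqref{eq:MCS}, and symmetrically for the $(g,t_1,\ldots,t_i)$ side. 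The key measurability for the construction is \eqref{eq:ximeas}, and the partition \eqref{eq:partition} of $\bar\Omega^{f\otimes h,g}$ guarantees that the swapped object remains an increasing family of stopping times. Combining this with Lemma~\ref{lem:ui} and the fact that each swap pairs paths ending at the same spatial point yields $\xi^\pi\in\RMST(\mu_0,\ldots,\mu_n)$.

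\textbf{Step 2 (Contradiction on $\widehat\SG_i^\xi$).} If $\pi$ were to put positive mass on $\widehat\SG_i^\xi$, then integrating the pointwise strict inequalities in the definitions of $\CS_i^\xi$ and $\MCS_i^\xi$ against $\pi$ delivers $\xi^\pi(\gamma\circ r_n)<\xi(\gamma\circ r_n)$, contradicting optimality. The ``wide-sense'' extension on $\{A^\xi_{(f,s_1,\ldots,s_{i-1})}(h,s)=1\}$ is harmless: there no genuine modification is performed, exactly as in \cite[Lemma 5.4]{BeCoHu14}. Hence every such joining must satisfy $\pi(\widehat\SG_i^\xi)=0$.

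\textbf{Step 3 (Pointwise version via optional section).} The family of all such joinings, restricted to $\widehat\SG_i^\xi$, projects to an optional set in $\S{i-1}_\R\times\CRo\times\R_+$ that must be evanescent under $\tilde r_{i-1}(\xi^{i-1})$; by the optional section theorem (using Remark~\ref{rem:pred ST} to identify optional and predictable $\sigma$-algebras) and the correspondence of Proposition~\ref{S2F}, this yields a Borel set $\Gamma_i\subseteq\S{i}_\R$ with $r_i(\xi^i)(\Gamma_i)=1$ and $(\Gamma_i^<\times\Gamma_i)\cap\widehat\SG_i^\xi=\emptyset$. Finally, working downward in $i$ and replacing $\Gamma_i$ by $\Gamma_i\cap\proj_{\S{i-1}}^{-1}(\Gamma_{i-1})$ enforces $\proj_{\S{i-1}}(\Gamma_i)\subseteq\Gamma_{i-1}$ without loss of mass, since $\proj_{\S{i-1}}(r_i(\xi^i))=r_{i-1}(\xi^{i-1})$.

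\textbf{Main obstacle.} The delicate point is Step~1: verifying that $\xi^\pi$ remains a randomised multi-stopping time embedding $(\mu_1,\ldots,\mu_n)$ in the correct order. This is exactly why $\SG_i^\xi$ is built out of both colour swaps and multi-colour swaps, and why the sets $\Lambda_j^{f\otimes h,g}$ appear: off these sets a partial swap would destroy monotonicity of the stopping times, forcing the full tail-swap encoded in \eqref{eq:CS}/\eqref{eq:MCS}. Careful bookkeeping of the partition \eqref{eq:partition}, the atom case $\Delta A^\xi_{(f,s_1,\ldots,s_{i-1})}(h,s)>0$ handled in \eqref{eq:cRMST}, and the preservation of marginal laws (which uses the martingale property together with the uniform integrability granted by Lemma~\ref{lem:ui}) is the technical heart of the argument; once Step~1 is in place, Steps~2 and~3 follow closely the template of \cite{BeCoHu14}.
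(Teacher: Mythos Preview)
Your overall strategy is correct and matches the paper's approach: the proof rests on the analogue of Proposition~\ref{prop:modification} (your Steps~1--2) and then a section-type argument (your Step~3). However, there are two genuine gaps.

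First, in Step~2 you claim that every joining $\pi$ satisfies $\pi(\widehat\SG_i^\xi)=0$. This is not what the swap argument delivers. The swap construction only yields a strict improvement on $\SG_i^\xi=\CS_i^\xi\cup\MCS_i^\xi$; on the additional wide-sense piece $\{(f,s_1,\ldots,s_{i-1})|(h,s):A^\xi_{(f,s_1,\ldots,s_{i-1})}(h,s)=1\}\times\S{i}_\R$ no modification is performed, hence no contradiction arises --- and a joining \emph{can} put positive mass there. (Your reference to \cite[Lemma~5.4]{BeCoHu14} is misplaced: that lemma establishes the inclusion $\SG_i\subseteq\widehat\SG_i^\xi$, not that the wide-sense extension is $\pi$-null.) In the paper this is handled \emph{after} the section step, by explicitly removing from $\Gamma_i$ all $(g,t_1,\ldots,t_i)$ for which $A^\xi_{(g,t_1,\ldots,t_{i-1})}$ reaches $1$ strictly before time $t_i-t_{i-1}$; this forces $\Gamma_i^<$ to avoid the wide-sense piece while preserving $r_i(\xi^i)(\Gamma_i)=1$.

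Second, Step~3 is not just optional section. Knowing $(r_\sX\otimes\id)(\pi)(\SG_i^\xi)=0$ for all joinings does \emph{not} imply that the projection of $\SG_i^\xi$ onto the first coordinate is evanescent (consider $E=S_\sX\times N$ for a $\nu$-null set $N$: its first projection is all of $S_\sX$). What is needed is the product decomposition of Proposition~\ref{prop:kele}: $\SG_i^\xi\subseteq(F\times\S{i}_\R)\cup(\S{i}_\R\times N)$ with $F$ $r_{i-1}(\xi^{i-1})$-evanescent and $N$ an $r_i(\xi^i)$-null set. Only then can you set $\tilde\Gamma_i=\S{i}_\R\setminus(F\cup N)$ (after enlarging $F$ to be forward-closed in time) and conclude $(\tilde\Gamma_i^<\times\tilde\Gamma_i)\cap\SG_i^\xi=\emptyset$. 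Your description conflates this two-sided decomposition with a one-sided evanescence statement.
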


The proof of Theorem \ref{thm:monotonicity principle} is based on the following two propositions.

\begin{proposition}\label{prop:modification}
 Let $\gamma:\S{n}_\R\to\R$ be Borel. Assume that \eqref{eq:Pgamma} is well posed and that $\xi\in\RMST(\mu_0,\ldots,\mu_1)$ is an optimizer. Fix $1\leq i \leq n$ and set $\sX=\S{i-1}_\R, m=r_{i-1}(\xi^{i-1})$. Then
$$(r_\sX\otimes \id)(\pi)(\SG_i^\xi)=0$$
for all $\pi\in \TRT(r_{i-1}(\xi^{i-1}),r_i(\xi^i)).$
\end{proposition}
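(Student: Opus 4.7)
The plan is to argue by contradiction and to construct, using $\pi$, an infinitesimal modification $\xi^\pi$ of the candidate optimiser $\xi$ which strictly decreases the objective, violating the optimality assumed for \eqref{eq:Pgamma}. Suppose there exists some $\pi\in\TRT(r_{i-1}(\xi^{i-1}),r_i(\xi^i))$ with $(r_\sX\otimes\id)(\pi)(\SG_i^\xi)>0$. Since $\SG_i^\xi=\CS_i^\xi\cup\MCS_i^\xi$ and $\CS_i^\xi=\bigcup_{j\ge i}\CS^\xi_{i\leftrightarrow j}$ is a countable union, by a decomposition we may reduce to one of the cases; the colour-swap case and the multi-colour case are handled in an entirely parallel fashion, so I focus on the case where, for some fixed $j\in\{i,\ldots,n-1\}$, $(r_\sX\otimes\id)(\pi)(\CS^\xi_{i\leftrightarrow j})>0$.

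Restricting $\pi$ to $(r_\sX\otimes\id)^{-1}(\CS^\xi_{i\leftrightarrow j})$ and exploiting the measurability of $((f,s_1,\ldots,s_{i-1}),(h,s))\mapsto \bar\xi_{(f,s_1,\ldots,s_{i-1})|(h,s)}$ from \eqref{eq:ximeas} together with Lemma~\ref{lem:taming}, I may pass to a sub-joining $\tilde\pi\le\pi$ of positive total mass on whose support (a) both $\bar\xi_{(f,s_1,\ldots,s_{i-1})|(h,s)}$ and $\xi_{(g,t_1,\ldots,t_i)}$ are honest elements of $\RMST^1$, (b) all integrals appearing in \eqref{eq:CS} are finite, and (c) the gap between the left and right hand sides of \eqref{eq:CS} is bounded below by some $\delta>0$. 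In addition the second marginal condition $\proj_\Ys(\tilde\pi)\le r_i(\xi^i)$ and the first marginal condition $\proj_\X(\tilde\pi)\le\P$ may be arranged to be strict, leaving room for a small $\varepsilon>0$ of `available mass' to be used for the modification.

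Using $\tilde\pi$ I now construct $\xi^{\tilde\pi}$: I remove an $\varepsilon$-fraction of the conditional mass $\bar\xi_{(f,s_1,\ldots,s_{i-1})|(h,s)}$ at each left leg of $\tilde\pi$ and replace it by the recoloured rule that applies $\rho^{i+1}_{(g,t_1,\ldots,t_i)},\ldots,\rho^j_{(g,t_1,\ldots,t_i)}$ for the first $j-i$ colour changes after $s$ and then reverts to $\rho^{j+1}_{(f,\ldots)|(h,s)},\ldots,\rho^n_{(f,\ldots)|(h,s)}$ on $\Lambda^{f\oplus h,g}_j$ (and performs the full tail-swap on its complement), as prescribed in Section~\ref{sec:CS}; symmetrically, at each right leg I subtract the corresponding mass from $\xi_{(g,t_1,\ldots,t_i)}$ and replace the tail rule by the mirror image. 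Because the recolouring is done in strict accordance with the partition \eqref{eq:partition} (respectively the set $\Lambda^{f\oplus h,g}_j$), the resulting family of stopping rules is increasing almost surely and the resulting measure disintegrates, at each level $k=i,\ldots,n$, through an optional increasing process; because the swap matches paths with the identical endpoint $f\oplus h(s_{i-1}+s)=g(t_i)$ and exchanges stopping rules colour by colour, the marginal constraints are preserved, so $\xi^{\tilde\pi}\in\RMST(\mu_0,\ldots,\mu_n)$.

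Integrating the uniform strict inequality in (c) above against $\varepsilon\,\tilde\pi$ and leaving the rest of $\xi$ untouched yields
\[
\textstyle\int\gamma\,dr_n(\xi^{\tilde\pi}) \;<\; \int\gamma\,dr_n(\xi),
\]
contradicting the optimality of $\xi$; hence $(r_\sX\otimes\id)(\pi)(\SG_i^\xi)=0$. The principal technical obstacle, and the step on which the multi-marginal aspect really bites, is the verification that the recoloured object $\xi^{\tilde\pi}$ is indeed a randomised multi-stopping time with the correct marginals: one must check the optionality/increasing property of the associated process at each of the $n-i+1$ affected levels, and check that marginals $\mu_i,\ldots,\mu_n$ are preserved colour by colour, which is precisely why $\Lambda^{f\oplus h,g}_j$ and the partition \eqref{eq:partition} were built into the definition of colour (resp.\ multi-colour) swap pairs. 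The bookkeeping follows, with appropriate modifications, the one-marginal argument of \cite[proof of Theorem~5.2]{BeCoHu14}.
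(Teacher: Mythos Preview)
Your proposal is essentially the same contradiction argument as the paper's, built on the colour-swap recolouring rules from Section~\ref{sec:CSMCSSGPairs}. Two minor technical differences are worth noting. First, the paper does not pass to a sub-joining with uniform gap $\delta$ and a small uniform $\varepsilon$-scaling; instead it keeps the full restricted $\pi$ and uses the intrinsic weight $1-A^\xi_{(f,s_1,\ldots,s_{i-1})}(h,s)+\1_{A^\xi=1}\Delta A^\xi$ to measure the mass actually available at each left leg, which makes the bookkeeping cleaner and avoids having to argue that a single $\varepsilon$ works uniformly over the (typically non-compact) support of $\tilde\pi$. Second, rather than building one competitor $\xi^{\tilde\pi}$ that performs both leg modifications at once, the paper builds two separate modifications $\xi^E$ (applying $g$'s tail rule at the $f$-leg) and $\xi^L$ (applying $f$'s tail rule at the $g$-leg) and sets $\xi^\pi=\tfrac12(\xi^E+\xi^L)$; this symmetric splitting makes transparent why each marginal $\mu_k$ is preserved, whereas in your formulation one must be careful that the removed and added masses on the two legs balance exactly. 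Neither point is a genuine gap in your outline, but if you try to fill in the details with a single global $\varepsilon$, you will find the paper's normalisation more natural.
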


\begin{proposition}\label{prop:kele}
 Let $(\sX,m)$ and $(\Ysf, \nu)$ be  Polish probability spaces and  $E\subseteq S_\sX\times \Ysf$ a Borel set. Then the following are equivalent:
  \begin{enumerate}
  \item $(r_\sX \otimes \id)(\pi)(E)=0$ for all $\pi\in \TRST^1(m, \nu )$.
  \item $E \subseteq (F \times \Ys)\ \cup\ (S_\sX\times N)$ for some evanescent set $F\subset S_\sX$ and a $\nu$-null set $N\subseteq \Ysf$.
  \end{enumerate}
\end{proposition}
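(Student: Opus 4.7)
The direction (2)$\Rightarrow$(1) follows from the marginal constraints on $\pi \in \TRST^1(m,\nu)$. Given the decomposition $E \subseteq (F \times \Ysf) \cup (S_\sX \times N)$, one estimates
\[
(r_\sX \otimes \id)(\pi)(E) \le \proj_{\X\times\R_+}(\pi)\bigl(r_\sX^{-1}(F)\bigr) + \proj_\Ysf(\pi)(N),
\]
and both terms vanish: the first because $\proj_{\X\times\R_+}(\pi) \in \RST(\sX,m)$ admits a disintegration of the form $\xi_{x,\omega}(ds)\,\P(d(x,\omega))$, and any $\P$-conull witness $A$ of evanescence of $r_\sX^{-1}(F)$ gives full $\xi$-measure; the second because $\proj_\Ysf(\pi) \le \nu$ and $\nu(N) = 0$.

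For (1)$\Rightarrow$(2) I argue by contrapositive: assuming (2) fails, I construct $\pi \in \TRST^1(m,\nu)$ with $(r_\sX \otimes \id)(\pi)(E) > 0$. Pull everything back to the canonical space by setting $\tilde E := (r_\sX \otimes \id)^{-1}(E) \subseteq \X \times \R_+ \times \Ysf$, whose $y$-sections $\tilde E_y$ are Borel optional subsets of $\X\times\R_+$ by Proposition~\ref{S2F}. Let $D(x,\omega,y) := \inf\{t : (x,\omega,t,y) \in \tilde E\}$ denote the joint debut, and set
\[
N^\ast := \bigl\{ y \in \Ysf ~:~ \P\bigl(D(\cdot,\cdot,y) < \infty\bigr) > 0 \bigr\}.
\]
The core claim is that $\nu(N^\ast) > 0$: otherwise $\tilde E_y$ is $\P$-evanescent for $\nu$-a.e.\ $y$, so $(\P\otimes\nu)(\{D<\infty\}\cap (\X\times(N^\ast)^c))=0$ by Fubini, and an optional-section argument produces an optional $\P$-evanescent $\tilde F \subseteq \X\times\R_+$ containing $\tilde E \cap (\X\times\R_+\times(N^\ast)^c)$. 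Setting $F := r_\sX(\tilde F)$ and $N := N^\ast$ then produces the decomposition forbidden by our standing hypothesis.

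Consequently $\nu(N^\ast) > 0$. Apply the Jankov–von Neumann selection theorem to the analytic set $\{(y,(x,\omega,t)) : y\in N^\ast,\ (x,\omega,t,y)\in\tilde E\}$ to obtain a jointly measurable map $y \mapsto \tau_y$ such that each $\tau_y$ is a $\bar\cG$-stopping time, the graph of $\tau_y$ is contained in $\tilde E_y$, and $\P(\tau_y<\infty)>0$ on a $\nu$-positive subset $Y' \subseteq N^\ast$ (cf.\ Theorem~\ref{thm:equiv RST}, enlarging the probability space with an independent uniform if needed). Defining
\[
\pi(d(x,\omega,t,y)) := \indic_{Y'}(y)\,\P(d(x,\omega))\,\delta_{\tau_y(x,\omega)}(dt)\,\nu(dy)
\]
and completing with a no-stopping rule on the remaining mass so that $\proj_\X(\pi)=\P$ yields $\pi \in \TRST^1(m,\nu)$: the randomised-stopping-time property of $\proj_{\X\times\R_+}(\pi_{\llcorner \cdot \times B})$ for each Borel $B$ follows since this marginal is a $\nu$-weighted mixture of randomised stopping times. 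Finally $(r_\sX \otimes \id)(\pi)(E) = \pi(\tilde E) \ge \int_{Y'}\P(\tau_y<\infty)\,\nu(dy) > 0$, contradicting (1).

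The principal obstacle is the optional-section step in the branch $\nu(N^\ast)=0$: assembling the pointwise-evanescent $y$-sections of $\tilde E$ into one optional evanescent $\tilde F$ is non-trivial, since a union of $\P$-evanescent sets need not be evanescent. What saves the argument is the joint measurability of $D$ together with the Fubini identity above. The overall strategy parallels the single-marginal statement underpinning the monotonicity principle in \cite{BeCoHu14}; the additional parameter $y\in\Ysf$ enters only as an independent probabilistic variable and does not interact with the stopping structure, so the scheme transfers without essential change.
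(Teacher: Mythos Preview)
Your $(2)\Rightarrow(1)$ direction is fine. The gap is in $(1)\Rightarrow(2)$, precisely at the ``core claim'' that the failure of (2) forces $\nu(N^\ast)>0$. This is false. Take $\sX=\{\ast\}$, $\Ysf=\R$ with $\nu=\mathcal N(0,1)$, and $E=\{((f,1),y):f(1)=y\}$. For every fixed $y$ the section $\tilde E_y=\{(\omega,1):\omega(1)=y\}$ is evanescent, so $N^\ast=\emptyset$; yet (2) fails, since any decomposition would force an evanescent $F\supseteq\{(f,1):f(1)\notin N\}$, hence $\W(\omega(1)\in N)=1$, impossible for a Gaussian-null $N$. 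And (1) fails too, via $\pi=(\omega\mapsto(\omega,1,\omega(1)))_\ast\W$, which is a legitimate joining. So in this example the proposition holds, but your argument cannot produce the witnessing $\pi$: you look for it among measures of the form $\nu(dy)\,\delta_{\tau_y}$ with $\tau_y$ depending on $y$ only through the \emph{target} section, whereas the right $\pi$ genuinely couples $y$ to the path. Your acknowledged ``principal obstacle'' is real and the Fubini patch does not close it: $(\P\otimes\nu)(\{D<\infty\})=0$ does \emph{not} yield an evanescent $\tilde F\subseteq\X\times\R_+$ covering all sections, as the same example shows.

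The argument that actually works (and which the paper imports from \cite[Proposition~5.9]{BeCoHu14}) is of Kellerer type, as the label \texttt{prop:kele} suggests: one shows that $c(E):=\sup_{\pi\in\TRST^1(m,\nu)}(r_\sX\otimes\id)(\pi)(E)$ is a Choquet capacity, reduces to compact $E$ via capacitability, and for compact $E$ proves the duality $c(E)=\min\{\,\text{``mass of }F\text{''}+\nu(N):E\subseteq(F\times\Ysf)\cup(S_\sX\times N)\,\}$ directly. This avoids the section-by-section decoupling of $y$ from the path that breaks your approach. An equivalent route is to regard $\TRST^1(m,\nu)$ as randomised stopping times on the enlarged base $(\sX\times\Ysf,\,m\otimes\nu)$, apply the classical section theorem there (cf.\ Remark~\ref{rem:section}), and then invoke Kellerer's lemma on product-null sets to split the resulting $(m\otimes\nu)$-evanescent obstruction into an $m$-evanescent piece and a $\nu$-null piece.
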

\begin{proof}
 This is a straightforward modification of \cite[Proposition 5.9]{BeCoHu14} to the case of a general starting law (see also the proof of \cite[Theorem~7.4]{BeCoHu14}).
\end{proof}

\begin{remark}\label{rem:section}
 Note that Proposition \ref{prop:kele} is closely related to the classical section theorem (cf.\ \cite[Theorems IV 84 and IV 85]{DeMeA}) which in our setup implies the following statement:

Let $(X,\mathcal B, m)$ be a Polish probability space. $E\subset S_\sX$ be Borel. Then the following are equivalent:
\begin{enumerate}
 \item $r_\sX(\alpha)(E)=0$ for all $\alpha\in\RST(\sX,m)$
\item $E$ is $m$-evanescent
\item $\P(((Y_s)_{s\leq\tau},\tau)\in E)=0$ for every $\cG^0$-stopping time $\tau$.
\end{enumerate}

\end{remark}

\begin{proof}[Proof of Theorem \ref{thm:monotonicity principle}]
 Fix $1\leq i\leq n$. Set $\sX=\S{i-1}_\R, m=r_{i-1}(\xi^{i-1})$ and consider the corresponding probability space $(\Xs,\P).$ By Proposition \ref{prop:modification} $(r_\sX\otimes \id)(\pi)(\SG_i^\xi)=0$ for all $\pi\in\TRST^1(r_{i-1}(\xi^{i-1}),r_i(\xi^i))$. Applying Proposition \ref{prop:kele} with $\Ysf=\S{i}_\R, \nu=r_i(\xi^i)$ we deduce that there exists a $r_{i-1}(\xi^{i-1})$-evanescent set $\tilde F_i$ and a $r_i(\xi^i)$-null set $N_i$ such that 
$$ \SG_i^\xi \subseteq (\tilde F_i \times \S{i}_\R) \cup (\S{i}_\R \times N_i).$$
Put $F_i:=\{(g,t_1,\ldots,t_i)\in \S{i}_\R : \exists (f,t_1,\ldots,t_{i-1},s_i)\in \tilde F_i,t_i\geq s_i, g\equiv f \text{ on } [0,s_i]\}.$ Then, $F_i$ is $r_{i-1}(\xi^{i-1})$-evanescent and  
$$ \SG_i^\xi \subseteq (F_i \times \S{i}_\R) \cup (\S{i}_\R \times N_i).$$ Setting $\tilde \Gamma_i=\S{i}_\R\setminus (F_i\cup N_i)$ we have $r_i(\xi^i)(\tilde \Gamma_i)=1$ as well as $\SG^\xi_i\cap (\tilde\Gamma_i^<\times\tilde\Gamma_i)=\emptyset.$ Define
$$\Gamma_i:=\tilde\Gamma_i\cap\{(g,t_1,\ldots,t_i)\in \S{i}_\R : A^\xi_{(g,t_1,\ldots,t_{i-1})}(\theta_{t_{i-1}}(g)_{\llcorner [0,s]},s)<1 \text{ for all } s<t_i-t_{i-1}\},$$
where $\theta_u(g)(\cdot)=g(\cdot+u)-g(u)$ as usual. Then $r_i(\xi^i)(\Gamma_i)=1$ and $\Gamma_i^<\cap \{(g,t_1,\ldots,t_i)\in \S{i}_\R : A^\xi_{(g,t_1,\ldots,t_{i-1})}(\theta_{t_{i-1}}(g)_{\llcorner [0,t_i-t_{i-1}]},t_i-t_{i-1})=1\}=\emptyset$ so that $\widehat{\SG}^\xi_i\cap(\Gamma_i^< \times\Gamma_i)=\emptyset.$ Finally, we can take a Borel subset of $\Gamma_i$ with full measure and taking suitable intersections we can assume that $\proj_{\S{i-1}_\R}(\Gamma_i)\subseteq \Gamma_{i-1}.$
\end{proof}

\subsection{Proof of Proposition \ref{prop:modification}}

For notational convenience we will only prove the statement for the colour swap pairs $\CS_i^\xi$. As the colour swap pairs are the main building block for the multi-colour swap pairs $\MCS^\xi_i$ it will be immediate how to adapt the proof for the general case. Moreover, it is clearly sufficient to show that for every $j \geq i$ we have $(r_\sX\otimes \id)(\pi)(\CS^\xi_{i\leftrightarrow j})=0$ for each $\pi \in \TRT(r_{i-1}(\xi^{i-1}),r_i(\xi^i)).$

Working towards a contradiction, we assume that there is an index $i\leq j\leq n$ and $\pi\in \TRT(r_{i-1}(\xi^{i-1}),r_i(\xi^i))$ such that $(r_\sX\otimes \id)(\pi)(\CS^\xi_{i\leftrightarrow j})>0$. By the definition of joinings (Definition \ref{def:joinings}) also $\pi_{\llcorner (r_\sX \otimes \id)^{-1}(E)}\in\TRT(r_{i-1}(\xi^{i-1}),r_i(\xi^i))$ for any $E\subset \S{i-1}_\R\times \S{i}_\R.$ Hence, by considering $(r_\sX\otimes \id)(\pi)_{\llcorner \CS^\xi_{i \leftrightarrow j}}$ we may assume that $(r_\sX\otimes \id)(\pi)$ is concentrated on $\CS^\xi_{i\leftrightarrow j}$. Recall that (by definition) there is no colour swap pair $((f,s_1,\ldots,s_{i-1})|(h,s),(g,t_1,\ldots,t_i))$ with $A^\xi_{(f,s_1,\ldots,s_{i-1})}(h,s)=1$ and $\Delta A^\xi_{(f,s_1,\ldots,s_{i-1})}(h,s)=0$. Hence,
\begin{align}\label{eq:pi conc}
\pi[((f,s_1,\ldots,s_{i-1})|(h,s),(g,t_1,\ldots,t_i)) : A^\xi_{(f,s_1,\ldots,s_{i-1})}(h,s)=1 \text{ and } \Delta A^\xi_{(f,s_1,\ldots,s_{i-1})}(h,s)=0]=0. 
\end{align}

We argue by contradiction and define two modifications of $\xi$, $\xi^E$ and $\xi^L$, based on the definition of $\CS_{i \leftrightarrow j}^\xi$ such that their convex combination yields a randomised multi-stopping time embedding the same measures as $\xi$ and leading to strictly less cost. The stopping time $\xi^E$ will stop paths earlier than $\xi$, and $\xi^L$ will stop paths later than $\xi$.

By Lemma \ref{lem:taming} and Corollary~\ref{cor:canonicalRMST}, for $(f,s_1,\ldots,s_{i-1})|(h,s)$ outside an $r_{i-1}(\xi^{i-1})$ - evanescent set and $(g,t_1,\ldots,t_i)$ outside an $r_i(\xi^i)$ null set there are  increasing sequences of $\bar\cG^a$-stopping times  $(\rho^j_{(f,s_1,\ldots,s_{i-1})|(h,s)})_{j=i}^n$ and $(\rho^j_{(g,t_1,\ldots,t_i)})_{j=i+1}^n$ defining $\bar\cG^a$-measurable disintegrations of $\xi_{(f,s_1,\ldots,s_{i-1})|(h,s)}$ and $\xi_{(g,t_1,\ldots,t_i)}$ as in \eqref{eq:STdisint}.

For $B\subset C(\R_+)\times \Xi^n$ and $(g,t_1,\ldots,t_i)\in \S{i}_\R$ we set 
$$ B^{(g,t_1,\ldots,t_i)\oplus}:= \{(\omega,T_i,\ldots,T_n)\in C(\R_+)\times \Xi^{n-i+1}~:~(g\oplus \omega,t_1,\ldots,t_{i-1},t_i+T_i,\ldots,t_i+T_n)\in B\}$$
and
$$B^{(g,t_1,\ldots,t_i)\otimes}:=\{(\omega,T_{i+1},\ldots,T_n)\in C(\R_+)\times \Xi^{n-i}~:~(g\oplus\omega,t_1,\ldots,t_i,t_i+T_{i+1},\ldots,t_i+T_n)\in B\}.$$
Observe that both $B^{(g,t_1,\ldots,t_i)\oplus}$ and $B^{(g,t_1,\ldots,t_i)\otimes}$ are then Borel.  Note that if we define $F(g,t_1, \dots, t_n) = \indic_{B}(g,t_1, \dots, t_n)$, then $F^{(f,s_1,\ldots,s_i)\oplus} (\eta,t_{i+1},\ldots,t_n) = \indic_{B^{(f,s_1,\ldots,s_i)\oplus}}(\eta,t_{i+1},\ldots,t_n)$, and similarly for $B^{(g,t_1,\ldots,t_i)\otimes}$.  Observe, that for $(f,s_1,\ldots,s_{i-1})|(h,s)$ with $A^\xi_{(f,s_1,\ldots,s_{i-1})}(h,s)=1$ and $\Delta A^\xi_{(f,s_1,\ldots,s_{i-1})}(h,s)>0$ it follows from \eqref{eq:cRMST} that
$$ \xi_{(f,s_1,\ldots,s_{i-1})|(h,s)}(B^{(g,t_1,\ldots,t_i) \oplus}) = \Delta A^\xi_{(f,s_1,\ldots,s_{i-1})}(h,s)  \xi_{(f\oplus h,s_1,\ldots,s_{i-1},s_{i-1}+s)}(B^{(g,t_1,\ldots,t_i)\otimes}).$$
Then, we define the measure $\xi^E$ by setting for $B\subset C(\R_+)\times \Xi^n$ (recall $\Lambda_j^{f\otimes h,g}$ from \eqref{eq:Afhg}) 
\begin{align*}
& \xi^E(B):= \numberthis \label{eq:xiE} \\
&\xi(B)
 - \int \xi_{(f,s_1,\ldots,s_{i-1})|(h,s)}(B^{(f\oplus h,s_1,\ldots,s_{i-1},s_{i-1}+s)\oplus})~ (r_\sX\otimes \id)(\pi)(d((f,s_1,\ldots,s_{i-1})|(h,s)),d(g,t_1,\ldots,t_i))\\
& + \int  (r_\sX\otimes \id)(\pi)(d((f,s_1,\ldots,s_{i-1})|(h,s)),d(g,t_1,\ldots,t_i))\\
& \left(1-A^\xi_{(f,s_1,\ldots,s_{i-1})}(h,s) +   \1_{A^\xi_{(f,s_1,\ldots,s_{i-1})}(h,s)=1}\Delta A^\xi_{(f,s_1,\ldots,s_{i-1})}(h,s)\right)\\ &\quad \left[\int_{C(\R_+)}\int_{[0,1]^{n-i+1}} \1_{\Lambda_j^{f\otimes h,g}}(\omega,u) \1_{B^{(f\oplus h,s_1,\ldots,s_{i-1},s_{i-1}+s)\otimes}}\left(\omega,\rho^{i+1}_{(g,t_1,\ldots,t_i)}(\omega,u),\ldots,\right.\right. \\
&\hspace{2cm} \left. \rho^j_{(g,t_1,\ldots,t_i)}(\omega,u),\rho^{j+1}_{(f,s_1,\ldots,s_{i-1})|(h,s)}(\omega,u),\ldots,\rho^{n}_{(f,s_1,\ldots,s_{i-1})|(h,s)}(\omega,u)\right)~\W(d\omega)~du\\
&+  \int_{C(\R_+)}\int_{[0,1]^{n-i+1}}\left(1- \1_{\Lambda_j^{f\otimes h,g}}(\omega,u) \right) \\
&\quad \1_{B^{(f\oplus h,s_1,\ldots,s_{i-1},s_{i-1}+s)\otimes}}\left(\omega,\rho^{i+1}_{(g,t_1,\ldots,t_i)}(\omega,u),\ldots,\rho^n_{(g,t_1,\ldots,t_i)}(\omega,u)\right)\W(d\omega)~du\Bigg].
\end{align*}
Similarly we define the measure $\xi^L$ by  setting for $B\subset C(\R_+)\times \Xi^n$
\begin{align*}
& \xi^L(B):= \numberthis \label{eq:xiL} \\
&\xi(B)
 - \int \left(1-A^\xi_{(f,s_1,\ldots,s_{i-1})}(h,s) +   \1_{A^\xi_{(f,s_1,\ldots,s_{i-1})}(h,s)=1}\Delta A^\xi_{(f,s_1,\ldots,s_{i-1})}(h,s)\right) \xi_{(g,t_1,\ldots,t_i)}( B^{(g,t_1,\ldots,t_i)\otimes})\\
&\qquad (r_\sX\otimes \id)(\pi)(d((f,s_1,\ldots,s_{i-1})|(h,s)),d(g,t_1,\ldots,t_i))\\
& + \int \left(1-A^\xi_{(f,s_1,\ldots,s_{i-1})}(h,s) +   \1_{A^\xi_{(f,s_1,\ldots,s_{i-1})}(h,s)=1}\Delta A^\xi_{(f,s_1,\ldots,s_{i-1})}(h,s)\right)\\
&\quad \left[\int_{C(\R_+)}\int_{[0,1]^{n-i+1}} \1_{\Lambda_j^{f\otimes h,g}}(\omega,u) \1_{B^{(g,t_1,\ldots,t_{i})\oplus}}\left(\omega,\rho^{i}_{(f,s_1,\ldots,s_{i-1})|(h,s)}(\omega,u),\ldots,\right.\right. \\
&\hspace{2cm} \left. \rho^j_{(f,s_1,\ldots,s_{i-1})|(h,s)}(\omega,u),\rho^{j+1}_{(g,t_1,\ldots,t_i)}(\omega,u),\ldots,\rho^{n}_{(g,t_1,\ldots,t_i)}(\omega,u)\right)~\W(d\omega)~du\\
&+  \int_{C(\R_+)}\int_{[0,1]^{n-i+1}}\left(1- \1_{\Lambda_j^{f\otimes h,g}}(\omega,u) \right)\\
&\quad \left. \1_{B^{(g,t_1,\ldots,t_i)\oplus}}\left(\omega,\rho^{i}_{(f,s_1,\ldots,s_{i-1})|(h,s)}(\omega,u),\ldots,\rho^n_{(f,s_1,\ldots,s_{i-1})|(h,s)}(\omega,u)\right)\W(d\omega)~du\right]\\
&\quad (r_\sX\otimes \id)(\pi)(d((f,s_1,\ldots,s_{i-1})|(h,s)),d(g,t_1,\ldots,t_i)).
\end{align*}

Then, we define a competitor of $\xi$ by  $\xi^\pi:=\frac12(\xi^E+\xi^L)$. We will show that $\xi^\pi\in \RMST(\mu_0,\ldots,\mu_n)$ and $\int \gamma ~d\xi > \int \gamma ~ d\xi^\pi$ which contradicts optimality of $\xi.$ 

First of all note that from the definition of $\Lambda_j^{f\oplus h, g}$ in \eqref{eq:Afhg} both $\xi^E,\xi^L\in\RMST$ (also compare \eqref{eq:tildetau}). Hence, also $\xi^\pi\in\RMST$. Next we show that $\xi^\pi\in\RMST(\mu_0,\ldots,\mu_n)$.

For bounded and measurable $F: C(\R)\times \Xi^n\to\R$ \eqref{eq:xiE} and \eqref{eq:xiL} imply by using \eqref{eq:cRMST} and \eqref{eq:ncRMST}
\begin{align*}
& 2\int F ~d(\xi-\xi^\pi)\numberthis \label{eq:xipi F}\\
=& \int (r_\sX\otimes r_i)(\pi)(d((f,s_1,\ldots,s_{i-1})|(h,s)),d(g,t_1,\ldots,t_i))\\
& \quad \left(1-A^\xi_{(f,s_1,\ldots,s_{i-1})}(h,s) + \1_{A^\xi_{(f,s_1,\ldots,s_{i-1})}(h,s)=1} \Delta A^\xi_{(f,s_1,\ldots,s_{i-1})}(h,s)\right) \times\\
&\left(\int F^{(f,s_1,\ldots,s_{i-1})|(h,s) \oplus}(\omega,S_i,\ldots,S_n)~\bar\xi_{(f,s_1,\ldots,s_{i-1})|(h,s)}(d\omega,dS_i,\ldots,dS_n)\right.\\
& +\int F^{(g,t_1,\ldots,t_i)\otimes}(\omega,T_{i+1},\ldots,T_n)~\xi_{(g,t_1,\ldots,t_i)}(d\omega,dT_{i+1},\ldots,dT_n)\\
&- \int_{C(\R_+)}\int_{[0,1]^{n-i+1}} \W(d\omega)du\1_{\Lambda_j^{f\otimes h,g}}(\omega,u) \\
&\quad\left[\int  F^{(f,s_1,\ldots,s_{i-1})|(h,s)\otimes}(\omega,T_{i+1},\ldots, T_j,S_{j+1},\ldots,S_n) \right.\\
&\qquad \delta_{\rho^{i+1}_{(g,t_1,\ldots,t_i)}(\omega,u)}(dT_{i+1})\cdots \delta_{\rho^{j}_{(g,t_1,\ldots,t_i)}(\omega,u)}(dT_{j})~ \delta_{\rho^{j+1}_{(f,s_1,\ldots,s_{i-1})|(h,s)}(\omega,u)}(dS_{j+1})\cdots \delta_{\rho^{n}_{(f,s_1,\ldots,s_{i-1})|(h,s)}(\omega,u)}(dS_{n})\\
&- \int F^{(g,t_1,\ldots,t_i)\oplus}(\omega,S_i,\ldots,S_j,T_{j+1},\ldots,T_n)\\
&\delta_{\rho^{i}_{(f,s_1,\ldots,s_{i-1})|(h,s)}(\omega,u)}(dS_{i})\cdots \delta_{\rho^{j}_{(f,s_1,\ldots,s_{i-1})|(h,s)}(\omega,u)}(dS_{j})~\delta_{\rho^{j+1}_{(g,t_1,\ldots,t_i)}(\omega,u)}(dT_{j+1})\cdots \delta_{\rho^{n}_{(g,t_1,\ldots,t_i)}(\omega,u)}(dT_{n}) \Bigg]\\
&- \int\W(d\omega)du \left(1-\1_{\Lambda_j^{f\otimes h,g}}(\omega,u) \right) \left[ \int  F^{(f,s_1\ldots,s_{i-1}|(h,s)\otimes}( \omega,T_{i+1},\ldots,T_n) \right.  \\
& \qquad \delta_{\rho^{i+1}_{(g,t_1,\ldots,t_i)}(\omega,u)}(dT_{i+1})\cdots \delta_{\rho^{n}_{(g,t_1,\ldots,t_i)}(\omega,u)}(dT_{n})   \\
 &- \int F^{(g,t_1,\ldots,t_i)\oplus}(\omega,S_i,\ldots,S_n)~\delta_{\rho^{i}_{(f,s_1,\ldots,s_{i-1})|(h,s)}(\omega,u)}(dS_{i})\cdots \delta_{\rho^{n}_{(f,s_1,\ldots,s_{i-1})|(h,s)}(\omega,u)}(dS_{n}) \Bigg]\Bigg) ~.
 \end{align*} 
Next we show that \eqref{eq:xipi F} extends to nonnegative $F$ satisfying $\xi(F)<\infty$ in the sense that it is well defined with a value in $[-\infty,\infty)$.
To this end, we will show that 
\begin{align*}
 & \int (r_\sX\otimes r_i)(\pi)(d((f,s_1,\ldots,s_{i-1})|(h,s)),d(g,t_1,\ldots,t_i))\\
& \quad \left(1-A^\xi_{(f,s_1,\ldots,s_{i-1})}(h,s) + \1_{A^\xi_{(f,s_1,\ldots,s_{i-1})}(h,s)=1} \Delta A^\xi_{(f,s_1,\ldots,s_{i-1})}(h,s)\right) \times \numberthis \label{eq:integrable}\\
&\left[\int F^{(f,s_1,\ldots,s_{i-1})|(h,s) \oplus}( \omega,S_i,\ldots,S_n)~\bar\xi_{(f,s_1,\ldots,s_{i-1})|(h,s)}(d\omega,dS_i,\ldots,dS_n)\right.\\
& +\int F^{(g,t_1,\ldots,t_i)\otimes}(\omega,T_{i+1},\ldots,T_n)~\xi_{(g,t_1,\ldots,t_i)}(d\omega,dT_{i+1},\ldots,dT_n)\Bigg]<\infty
\end{align*}
Since $\pi\in\TRT(r_{i-1}(\xi^{i-1}),r_i(\xi^i))$ the integral of the second term in the square brackets in \eqref{eq:integrable} is bounded by $\xi(F)$, and hence is finite. To see that the first term is also finite, write $\proj_{\Xs\times\R_+}(\pi)=:\pi_1$ and note that $\pi_1\in\RST(\S{i-1}_\R,r_{i-1}(\xi^{i-1}))$. Hence, the disintegration $(\pi_1)_{(f,s_1,\ldots,s_{i-1})}$  of $\pi_1$ wrt $r_{i-1}(\xi^{i-1})$ is a.s.\ in $\RST$. Fix $(f,s_1,\ldots,s_{i-1})\in \S{i-1}_\R$ and assume $\alpha:=(\pi_1)_{(f,s_1,\ldots,s_{i-1})}\in\RST$ (which holds on a set of measure one). In case that $\alpha_\omega(\R_+)<1$ we extend it to a probability on $[0,\infty]$ by adding an atom at $\infty$. We denote the resulting randomized stopping time still by $\alpha$.  Then, we can calculate using the strong Markov property of the Wiener measure for the first equality and $F\geq 0$ for the first inequality
\begin{align*}
 & \int F^{(f,s_1,\ldots,s_{i-1})\otimes}(\omega,s_i,\ldots,s_n)~ \xi_{(f,s_1,\ldots,s_{i-1})}(d\omega,ds_i,\ldots,ds_n) \\
=&  \iint F^{(f,s_1,\ldots,s_{i-1})\otimes}(\omega_{\llcorner [0,t]}\oplus \theta_t\omega,s_i,\ldots,s_n)~ (\xi_{(f,s_1,\ldots,s_{i-1})})_{\omega_{\llcorner [0,t]}\oplus \theta_t\omega}(ds_i,\ldots,ds_n)  \alpha_{\omega}(dt) \W(d\omega) \\
=&  \iint  F^{(f,s_1,\ldots,s_{i-1})\otimes}(\omega_{\llcorner [0,t]}\oplus \tilde\omega,s_i,\ldots,s_n)~ (\xi_{(f,s_1,\ldots,s_{i-1})})_{\omega_{\llcorner [0,t]}\oplus \tilde\omega}(ds_i,\ldots,ds_n)  \alpha_{\omega}(dt) \W(d\omega) \W(d\tilde\omega) \\
\geq &   \iint \1_{\{(\omega,t) : t \leq s_i<\infty\}} F^{(f,s_1,\ldots,s_{i-1})\otimes}(\omega_{\llcorner [0,t]}\oplus \tilde\omega,s_i,\ldots,s_n)~ (\xi_{(f,s_1,\ldots,s_{i-1})})_{\omega_{\llcorner [0,t]}\oplus \tilde\omega}(ds_i,\ldots,ds_n)  \alpha_{\omega}(dt) \W(d\omega) \W(d\tilde\omega) \\
=& \iint \1_{\{(\omega,t) : t <\infty\}} F^{(f,s_1,\ldots,s_{i-1})|r(\omega,t)\oplus}( \tilde \omega,s'_i,\ldots,s'_n)~ \xi_{(f,s_1,\ldots,s_{i-1})|r(\omega,t)}(d\tilde \omega,ds_i',\ldots,ds_n')  \alpha(d\omega,dt) 
\end{align*}
 Hence,
\begin{align*}
& \int (r_\sX\otimes r_i)(\pi)(d((f,s_1,\ldots,s_{i-1})|(h,s)),d(g,t_1,\ldots,t_i))\\
& \left(1-A^\xi_{(f,s_1,\ldots,s_{i-1})}(h,s) + \1_{A^\xi_{(f,s_1,\ldots,s_{i-1})}(h,s)=1} \Delta A^\xi_{(f,s_1,\ldots,s_{i-1})}(h,s)\right) \times \\
&\int F^{(f,s_1,\ldots,s_{i-1})|(h,s) \oplus}( \omega,S_i,\ldots,S_n)~\bar\xi_{(f,s_1,\ldots,s_{i-1})|(h,s)}(d\omega,dS_i,\ldots,dS_n)\\
= & \int (r_\sX\otimes r_i)(\pi)(d((f,s_1,\ldots,s_{i-1})|(h,s)),d(g,t_1,\ldots,t_i)) \times\\
&\int F^{(f,s_1,\ldots,s_{i-1})|(h,s) \oplus}( \omega,S_i,\ldots,S_n)~\xi_{(f,s_1,\ldots,s_{i-1})|(h,s)}(d\omega,dS_i,\ldots,dS_n) \\
 \leq& \iint F^{(f,s_1,\ldots,s_{i-1})\otimes}(\omega,s_i,\ldots,s_n)~ \xi_{(f,s_1,\ldots,s_{i-1})}(d\omega,ds_i,\ldots,ds_n) ~ r_{i-1}(\xi^{i-1})(d((f,s_1,\ldots,s_{i-1}))\\
= &\  \xi(F) < \infty~.
\end{align*}
Applying \eqref{eq:xipi F} to $\mathfrak t_n(\omega,t_1,\ldots,t_n)=t_n$, and observing that all the terms on the right-hand side cancel implies that $\xi^\pi(\mathfrak t_n)=\xi(\mathfrak t_n)<\infty.$ Taking $F(\omega,s_1,\ldots,s_n)=G(\omega(s_j))$ for $0\leq j\leq n$ with $s_0:=0$ for bounded and measurable $G:\R\to\R$ the right hand side vanishes. For $j<i$ this follows since $\xi^{i-1}=(\xi^\pi)^{i-1}$ as we have not changed any stopping rule for colours prior to $i.$ For $j\geq i$ this follows from the fact that
 $\pi$ is concentrated on pairs $((f,s_1,\ldots,s_{i-1})|(h,s),(g,t_1,\ldots,t_i))$ satisfying $f\oplus h(s_i+s)=g(t_i)$. Hence, we have shown that $\xi^\pi\in\RMST(\mu_0,\ldots,\mu_n).$

Using that $\xi^\pi(\gamma^-),\xi(\gamma^-)<\infty$, by well posedness of \eqref{eq:Pgamma}, we can apply \eqref{eq:xipi F} to $F=\gamma$ to obtain by the Definition of $\CS^\xi_{i\leftrightarrow j}$ that 
\begin{align*}
&\int (\gamma\circ r_n)^{(f,s_1,\dots,s_{i-1})|(h,s) \oplus}(\omega,S_i,\ldots,S_n)~\bar\xi_{(f,s_1,\ldots,s_{i-1})|(h,s)}(d\omega,dS_i,\ldots,dS_n)\\
& +\int (\gamma\circ r_n)^{(g,t_1,\ldots,t_i)\otimes}(\omega,T_{i+1},\ldots,T_n)~\xi_{(g,t_1,\ldots,t_i)}(d\omega,dT_{i+1},\ldots,dT_n)\\
&- \int_{C(\R_+)}\int_{[0,1]^{n-i+1}} \1_{\Lambda_j^{f\otimes h,g}}(\omega,u) \left[\int  (\gamma\circ r_n)^{(f,s_1,\ldots,s_{i-1})|(h,s)\otimes}(\omega,T_{i+1},\ldots, T_j,S_{j+1},\ldots,S_n) \right.\\
&\qquad \delta_{\rho^{i+1}_{(g,t_1,\ldots,t_i)}(\omega,u)}(dT_{i+1})\cdots \delta_{\rho^{j}_{(g,t_1,\ldots,t_i)}(\omega,u)}(dT_{j})~ \delta_{\rho^{j+1}_{(f,s_1,\ldots,s_{i-1})|(h,s)}(\omega,u)}(dS_{j+1})\cdots \delta_{\rho^{n}_{(f,s_1,\ldots,s_{i-1})|(h,s)}(\omega,u)}(dS_{n})\\
&- \int (\gamma\circ r_n)^{(g,t_1,\ldots,t_i)\oplus}(\omega,S_i,\ldots,S_j,T_{j+1},T_n)\\
&\delta_{\rho^{i}_{(f,s_1,\ldots,s_{i-1})|(h,s)}(\omega,u)}(dS_{i})\cdots \delta_{\rho^{j}_{(f,s_1,\ldots,s_{i-1})|(h,s)}(\omega,u)}(dS_{j})~\delta_{\rho^{j+1}_{(g,t_1,\ldots,t_i)}(\omega,u)}(dT_{j+1})\cdots \delta_{\rho^{n}_{(g,t_1,\ldots,t_i)}(\omega,u)}(dT_{n}) \Bigg]\W(d\omega)du\\
&- \int \left(1-\1_{\Lambda_j^{f\otimes h,g}}(\omega,u) \right) \left[ \int  (\gamma\circ r_n)^{(f,s_1,\ldots,s_{i-1})|(h,s)\otimes}( \omega,T_{i+1},\ldots,T_n) \right.  \\
& \qquad \delta_{\rho^{i+1}_{(g,t_1,\ldots,t_i)}(\omega,u)}(dT_{i+1})\cdots \delta_{\rho^{n}_{(g,t_1,\ldots,t_i)}(\omega,u)}(dT_{n})   \\
& - \int (\gamma\circ r_n)^{(g,t_1,\ldots,t_i)\oplus}(\omega,S_i,\ldots,S_n)~\delta_{\rho^{i}_{(f,s_1,\ldots,s_{i-1})|(h,s)}(\omega,u)}(dS_{i})\cdots \delta_{\rho^{n}_{(f,s_1,\ldots,s_{i-1})|(h,s)}(\omega,u)}(dS_{n}) \Bigg]\W(d\omega)du 
\end{align*}
is $(r_\sX\otimes\id)(\pi)$ a.s.\ strictly positive applying Lemma \ref{lem:taming}. Hence, we arrive at the contradiction $\int \gamma~d\xi^\pi < \int \gamma d\xi.$ \qed

\subsection{Secondary optimization (and beyond)}\label{sec:sec min}

We set  $\bar\mu=(\mu_0,\ldots,\mu_n)$ and denote by $\Opt(\gamma,\bar\mu)$ the set of optimizers of \eqref{eq:Pgamma}. If $\pi\mapsto \int \gamma d\pi$ is lower semicontinuous then $\Opt(\gamma,\bar\mu)$ is a closed subset of $\RMST(\mu_0,\mu_1,\ldots,\mu_n)$ and therefore also compact.

\begin{definition}[Secondary stop-go pairs]
 Let $\gamma,\gamma':\S{n}_\R\to\R$ be Borel measurable. The set of secondary stop-go pairs of colour $i$ relative to $\xi$, short $\SG^\xi_{2,i}$, consists of all $(f,s_1,\ldots,s_{i-1})|(h,s)\in\S{i}_\R,(g,t_1,\ldots,t_i)\in \S{i}_\R$ such that $f\oplus h(s_{i-1}+s)=g(t_i)$ and either $((f,s_1,\ldots,s_{i-1})|(h,s),(g,t_1,\ldots,t_i))\in\SG^\xi_i$ or equality holds in \eqref{eq:CS} for $\gamma$, and strict inequality holds in \eqref{eq:CS} for $\gamma'$, or equality holds in \eqref{eq:MCS} for $\gamma$, and strict inequality holds in \eqref{eq:MCS} for $\gamma'$. As before we agree that $((f,s_1,\ldots,s_{i-1})|(h,s),(g,t_1,\ldots,t_i))\in\SG^\xi_i$ if either of the integrals in \eqref{eq:CS} or \eqref{eq:MCS} is infinite or not well defined.

We also define the secondary stop-go pairs of colour $i$ relative to $\xi$ in the wide sense, $\widehat{\SG}^\xi_{2,i}$, by $\widehat{\SG}^\xi_{2,i}=\SG^\xi_{2,i}\cup \{(f,s_1,\ldots,s_{i-1})|(h,s)\in \S{i}_\R:A^\xi_{(f,s_1,\ldots,s_{i-1})}(h,s)=1\}\times\S{i}_\R.$

The set of secondary stop-go pairs relative to $\xi$ is defined by $\SG^\xi := \bigcup_{1\leq i\leq n} \SG_i^\xi$. The secondary stop-go pairs in the wide sense are $\widehat{\SG}^\xi:= \bigcup_{1\leq i\leq n} \widehat{\SG}_i^\xi$.
\end{definition}

\begin{theorem}[Secondary minimisation]\label{thm:sec min}
 Let $\gamma,\gamma':\S{n}_\R\to\R$ be Borel measurable. Assume that $\Opt(\gamma,\bar\mu)\neq\emptyset$ and that $\xi\in\Opt(\gamma,\bar\mu)$ is an optimiser for 
\begin{align}\label{eq:sec min}
 P_{\gamma'|\gamma}(\bar\mu)=\inf_{\pi\in\Opt(\gamma,\bar\mu)}\int \gamma'\ d\pi.
\end{align}
Then, for any $i\leq n$ there exists a Borel set $\Gamma_i\subset \S{i}$ such that $r_i(\xi^i)(\Gamma_i)=1$ and 
\begin{align}
\widehat{\SG}_{2,i}^{\xi}\cap(\Gamma_i^{<}\times\Gamma_i)=\emptyset.
\end{align}
\end{theorem}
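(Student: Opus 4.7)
The plan is to extend the argument of Theorem~\ref{thm:monotonicity principle} to the secondary setting by proving the analogue of Proposition~\ref{prop:modification}: for every $1\leq i\leq n$ and every $\pi\in\TRT(r_{i-1}(\xi^{i-1}),r_i(\xi^i))$,
\[
(r_\sX\otimes\id)(\pi)(\SG^\xi_{2,i})=0,
\]
with $\sX=\S{i-1}_\R$ and $m=r_{i-1}(\xi^{i-1})$. Once this is established, the passage to a Borel set $\Gamma_i$ with $r_i(\xi^i)(\Gamma_i)=1$ and $\widehat{\SG}^\xi_{2,i}\cap(\Gamma_i^{<}\times\Gamma_i)=\emptyset$ is identical to the second half of the proof of Theorem~\ref{thm:monotonicity principle}: apply Proposition~\ref{prop:kele} with $\Ysf=\S{i}_\R$ and $\nu=r_i(\xi^i)$ to cover $\SG^\xi_{2,i}$ by $(\tilde F_i\times\S{i}_\R)\cup(\S{i}_\R\times N_i)$ for some evanescent $\tilde F_i$ and null $N_i$, enlarge $\tilde F_i$ to its path-stable version $F_i$, put $\tilde\Gamma_i:=\S{i}_\R\setminus(F_i\cup N_i)$, and intersect with $\{(g,t_1,\ldots,t_i):A^\xi_{(g,t_1,\ldots,t_{i-1})}(\theta_{t_{i-1}}(g)_{\llcorner[0,s]},s)<1\text{ for all }s<t_i-t_{i-1}\}$ to absorb the wide-sense enlargement.

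To prove the vanishing statement, suppose for contradiction that some $\pi$ puts positive mass on $\SG^\xi_{2,i}$. Decompose $\SG^\xi_{2,i}=\SG^\xi_i\cup(\SG^\xi_{2,i}\setminus\SG^\xi_i)$. The first component must have mass zero under $(r_\sX\otimes\id)(\pi)$, for otherwise the restriction of $\pi$ to it produces a joining concentrated on primary stop-go pairs, and the construction of Proposition~\ref{prop:modification} (which only used $\xi\in\Opt(\gamma,\bar\mu)$) yields a strict $\gamma$-improvement, contradicting primary optimality. After this restriction we may assume $(r_\sX\otimes\id)(\pi)$ is concentrated on $\SG^\xi_{2,i}\setminus\SG^\xi_i$, i.e.\ on pairs where the colour-swap or multi-colour-swap inequality (\eqref{eq:CS} resp.\ \eqref{eq:MCS}) holds with equality for $\gamma$ and strict inequality for $\gamma'$.

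Form the competitor $\xi^\pi=\frac{1}{2}(\xi^E+\xi^L)$ exactly as in \eqref{eq:xiE}--\eqref{eq:xiL}. The verification that $\xi^\pi\in\RMST(\mu_0,\ldots,\mu_n)$ is unchanged. Apply the identity \eqref{eq:xipi F} twice. For $F=\gamma$, the bracketed integrand on the right-hand side is, pointwise, the LHS minus the RHS of \eqref{eq:CS} (or \eqref{eq:MCS}); by the equality clause in the definition of $\SG^\xi_{2,i}\setminus\SG^\xi_i$ this vanishes $\pi$-almost surely, so $\int\gamma\,d\xi^\pi=\int\gamma\,d\xi$. Combined with the correct marginals this gives $\xi^\pi\in\Opt(\gamma,\bar\mu)$. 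For $F=\gamma'$, the analogous bracket is $\pi$-a.s.\ strictly positive, and the integrability estimates of Lemma~\ref{lem:taming} apply verbatim, so $\int\gamma'\,d\xi^\pi<\int\gamma'\,d\xi$, contradicting the assumption that $\xi$ solves \eqref{eq:sec min}.

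The main obstacle is purely conceptual rather than technical: one must extract from a single construction both a perturbation that is $\gamma$-neutral (hence stays in $\Opt(\gamma,\bar\mu)$) and that strictly improves $\gamma'$. The definition of secondary stop-go pair is engineered precisely so that the dual application of \eqref{eq:xipi F} produces exactly these two features simultaneously, and this is the only place where the structure of the secondary problem enters; every other step is a routine transcription of the primary argument.
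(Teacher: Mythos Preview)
Your proposal is correct and follows exactly the approach the paper itself indicates: a straightforward modification of Proposition~\ref{prop:modification} combined with Proposition~\ref{prop:kele} along the lines of the proof of Theorem~\ref{thm:monotonicity principle}. Your write-up is in fact considerably more explicit than the paper's one-line proof; the only point worth adding for completeness is that, just as in the primary case, the restriction of $\pi$ to $\SG^\xi_{2,i}\setminus\SG^\xi_i$ should be further refined to a single swap type (some $\CS^\xi_{i\leftrightarrow j}$ or $\MCS^\xi_i$) before building $\xi^\pi$, so that the competitor matches the equality/strict-inequality clause invoked.
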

Theorem \ref{thm:sec min} follows from a straightforward modification of Proposition \ref{prop:modification} by the same proof as for Theorem \ref{thm:monotonicity principle} using Proposition \ref{prop:kele}. We omit further details. 

\begin{remark}
 Of course  the previous theorem can be applied repeatedly to a sequence of functions $\gamma,\gamma',\gamma'',\ldots$ and sets $\Opt(\gamma,\bar\mu),\Opt(\gamma'|\gamma,\bar\mu),\Opt(\gamma''|\gamma'|\gamma,\bar\mu),\ldots\ $ leading to $\SG^\xi_3,\SG^\xi_4,\ldots .$  We omit further details.
\end{remark}

\subsection{Proof of Main Result}

We are now able to conclude, by observing that our main result is now a simple consequence of previous results.

\begin{proof}[Proof of Theorem \ref{thm:monprin2}]
 Since any $\xi\in\RMST(\mu_0,\ldots,\mu_n)$ induces via Lemma \ref{lem:taming} and Corollary~\ref{cor:canonicalRMST} a sequence of stopping times as used for the definition of stop-go pairs in Section \ref{sec:probint} the result follows from Theorem \ref{thm:sec min}.
\end{proof}

\bibliography{joint_biblio}{}
\bibliographystyle{plain}

\end{document}